\newtheorem{proposition}{Proposition}[section]
\newtheorem{lemma}[proposition]{Lemma}
\newtheorem{corollary}[proposition]{Corollary}
\newtheorem{theorem}[proposition]{Theorem}
\theoremstyle{definition}
\newtheorem{definition}[proposition]{Definition}
\newtheorem{remark}[proposition]{Remark}
\numberwithin{equation}{section}
\newtheorem*{example*}{Example}
\newcommand{\customlabel}[2]{%
	\protected@write \@auxout {}{\string \newlabel {#1}{{#2}{}}}}
\newtheorem{reduction}{Reduction}
\begin{document}

\begin{center}
\LARGE
\textbf{Products of three conjugacy classes in the alternating group}
\bigskip\bigskip

\large
Daniele Dona
\bigskip

\normalsize
HUN-REN Alfr\'ed R\'enyi Institute of Mathematics

Re\'altanoda utca 13-15, Budapest 1053, Hungary

\texttt{dona@renyi.hu}
\bigskip\medskip
\end{center}

\begin{minipage}{110mm}
\small
\textbf{Abstract.} We prove that for $\delta$ small, $n$ large, and any three conjugacy classes $C_{1},C_{2},C_{3}$ of $G=\mathrm{Alt}(n)$ of size at least $|G|^{1-\delta}$ we have $C_{1}C_{2}C_{3}=G$.

The result provides a positive answer to Problem 20.23 of the Kourovka Notebook \cite{KM22}, improves theorems of Garonzi and Mar\'oti \cite{GM21} (using $4$ classes) and Rodgers \cite{Rod02} (using larger classes), complements the known result for $G$ a simple group of Lie type \cite{MP21} \cite{LST24} \cite{FM25}, and is tight in several senses. Furthermore, since no character theory is involved, the proof can be used in principle to build a constructive algorithm that, given $g\in G$, outputs $c_{i}\in C_{i}$ such that $c_{1}c_{2}c_{3}=g$.
\medskip

\textbf{Keywords.} Alternating group, conjugacy class.
\medskip

\textbf{MSC2020.} 20B30, 20E45.
\end{minipage}
\medskip

\section{Introduction}

Garonzi and Mar\'oti proved the following fact about \textit{normal} (i.e.\ conjugation-invariant) subsets of $\mathrm{Alt}(n)$.

\begin{theorem}[\cite{GM21}, Thm.~1.1]
For any $\varepsilon>0$, any $n$ large enough depending on $\varepsilon$, and any four normal subsets $S_{1},S_{2},S_{3},S_{4}$ of $G=\mathrm{Alt}(n)$ satisfying $|S_{i}||S_{j}|\geq|G|^{1+\varepsilon}$ for all choices of $1\leq i<j\leq 4$, we have $S_{1}S_{2}S_{3}S_{4}=G$.
\end{theorem}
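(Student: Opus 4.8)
First I would reduce from normal subsets to conjugacy classes. Each $S_i$ is a union of at most $2p(n)=e^{O(\sqrt n)}=|G|^{o(1)}$ conjugacy classes, so it contains one of them, $C_i$, with $|C_i|\ge|S_i|/(2p(n))$; then $|C_i||C_j|\ge|G|^{1+\varepsilon}/(2p(n))^2\ge|G|^{1+\varepsilon'}$ for $n$ large, where $\varepsilon'=\varepsilon/2$, and it suffices to treat conjugacy classes. Writing $|C_i|=|G|^{1-\delta_i}$, the inequalities $|C|\le n^{|\operatorname{supp}(\sigma)|}$ and $|C_G(\sigma)|=\prod_k k^{a_k(\sigma)}a_k(\sigma)!$ imply that every $\sigma\in C_i$ has all but $O_T(\delta_i n)$ of its points in cycles of length $>T$ (any fixed $T$), while summing $\delta_i+\delta_j\le 1-\varepsilon'$ over all six pairs gives $\sum_i\delta_i\le 2-2\varepsilon'$; in particular $\dfrac{|G|}{|C_1||C_2||C_3||C_4|}=|G|^{\sum_i\delta_i-3}\le|G|^{-1-2\varepsilon'}\to0$.

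Next comes the class-algebra identity: for representatives $c_i\in C_i$ and any $g\in G$,
\begin{equation*}
\#\{(x_1,x_2,x_3,x_4)\in C_1\times C_2\times C_3\times C_4:\ x_1x_2x_3x_4=g\}=\frac{|C_1||C_2||C_3||C_4|}{|G|}\bigl(1+\Sigma(g)\bigr),
\end{equation*}
with $\Sigma(g)=\sum_{1\ne\chi\in\operatorname{Irr}(G)}\chi(1)^{-3}\chi(c_1)\chi(c_2)\chi(c_3)\chi(c_4)\overline{\chi(g)}$; since the left-hand side is a non-negative integer and $\Sigma(g)$ is real, it suffices to prove $\Sigma(g)>-1$. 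I would split $\operatorname{Irr}(G)$ by the shape $\lambda$ of the label: the \emph{near-hook} characters, with $m(\lambda):=\min(n-\lambda_1,\,n-\lambda_1')\le\rho n$ for a small fixed $\rho$, and the \emph{bulk}, with $m(\lambda)>\rho n$. A bulk character has $\chi(1)\ge|G|^{\kappa(\rho)}$, and on an element of a large class (which has few short cycles, so the Murnaghan--Nakayama rule forces the removal of long rim hooks that a bulk shape can only accommodate leaving a tiny, highly constrained remainder) the Larsen--Shalev and Larsen--Tiep bounds give $|\chi(c_i)|\le\chi(1)^{\alpha(c_i)}$ with $\alpha(c_i)\le\delta_i+o(1)$ — essentially the statement that the character bound for a class is at least as strong as its centralizer bound. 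Using $|\chi(g)|\le\chi(1)$ crudely, the bulk term is then $\le\chi(1)^{\sum_i\alpha(c_i)-2}\le\chi(1)^{-2\varepsilon'+o(1)}\le|G|^{-2\varepsilon'\kappa(\rho)+o(1)}$, and since there are only $|G|^{o(1)}$ characters in all, the bulk part of $\Sigma(g)$ is $o(1)$, uniformly in $g$.

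The near-hook part is where the real work lies, and I expect it to be the main obstacle. There are $e^{O(\sqrt{\rho n})}$ near-hook characters and the triangle inequality fails badly for them: already $\chi^{(n-1,1)}$ contributes $(\operatorname{fix}(c_1)-1)\cdots(\operatorname{fix}(c_4)-1)(\operatorname{fix}(g)-1)/(n-1)^3$, of order $\pm n$ once the $c_i$ and $g$ carry a linear number of fixed points (permitted by the size hypothesis). One must therefore capture the cancellation among these characters. For $\lambda=(n-m,\mu)$ with $\mu\vdash m$ (the label of a near-hook character of $G$, duals identified on $A_n$), the Murnaghan--Nakayama rule, used to strip the long rim hooks coming from the long cycles of each argument, expresses $\chi^{\lambda}(\sigma)$ as, up to sign, a short-partition character value, in magnitude $\lesssim(\delta_i n)^m$ for the $c_i$ and controlled similarly for $g$; summed against the other four factors and over the $p(m)$ shapes $\mu\vdash m$, the level-$m$ contributions should largely annihilate, leaving a net remainder that I expect to be $O(\rho)+o(1)$ and hence $>-1$ for $\rho$ small and $n$ large. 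Making this cancellation precise and uniform over all admissible $(\delta_1,\dots,\delta_4)$ and all targets $g$ — effectively, a generating-function identity exhibiting the near-hook part of $\Sigma(g)$ as a genuinely small correction — is the crux. (As a safeguard for the most recalcitrant configurations — several classes and the target all with a positive proportion of fixed points — I would be ready to set characters aside there and build $g=x_1x_2x_3x_4$ by an explicit construction, using that each $S_i$ contains an abundance of every prescribed cycle type and that permutations with many fixed points multiply flexibly.)
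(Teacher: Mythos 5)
This statement is quoted from Garonzi--Mar\'oti (\cite{GM21}, Thm.~1.1); the present paper gives no proof of it, so your attempt can only be measured against the original, which is indeed a character-theoretic argument of the general shape you describe (reduce to classes, expand the four-fold class product via the Frobenius formula, and beat the nontrivial characters). Your reduction from normal subsets to classes and the identity for $\Sigma(g)$ are fine. The problem is that what you present is an outline whose decisive step is explicitly left open: you yourself call the near-hook contribution ``the crux'' and only say that the level-$m$ terms ``should largely annihilate.'' That cancellation (or, in \cite{GM21}, the avoidance of it via sharper individual bounds $|\chi^{\lambda}(\sigma)|$ for $\lambda$ close to a hook, controlled by the fixed points and short cycles of $\sigma$ through $m(\lambda)$) is the entire content of the theorem; without it you have not proved anything beyond the trivial identity, and the parenthetical ``safeguard'' of an ad hoc construction is not an argument.

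There is also a concrete error in the part you treat as routine. The bulk bound $|\chi(c_i)|\le\chi(1)^{\delta_i+o(1)}$, glossed as ``the character bound for a class is at least as strong as its centralizer bound,'' is neither the centralizer bound nor what Larsen--Shalev give. The centralizer bound yields $|\chi(c_i)|\le|G|^{\delta_i/2}\le\chi(1)^{\delta_i/(2\kappa(\rho))}$ for a bulk character of degree $\ge|G|^{\kappa(\rho)}$, and since $\kappa(\rho)\approx\rho$ is small this exponent is far larger than $\delta_i$; summed over the four classes it exceeds $2$ and the bulk term does not go to $0$. The Larsen--Shalev exponent $E(\sigma)$ is no better here: a class of size $|G|^{1/2+\varepsilon}$ may have $\Theta(n)$ fixed points, for which $E(\sigma)=1-o(1)$, so $|\chi(\sigma)|\le\chi(1)^{E(\sigma)+o(1)}$ is vacuous. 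A genuine bound of the form $|\chi^{\lambda}(c_i)|\le\chi^{\lambda}(1)^{f(\delta_i,\rho)}$ with $\sum_i f(\delta_i,\rho)<2$ for bulk shapes needs to be stated and proved (or correctly cited); as written, both halves of your estimate for $\Sigma(g)$ are unproven.
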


In particular, for any four conjugacy classes $C_{i}$ with $|C_{i}|\geq|G|^{\frac{1}{2}+\varepsilon}$ we have $C_{1}C_{2}C_{3}C_{4}=G$. Then, they asked whether it was possible to do the same with \textit{three} classes rather than four, up to relaxing the exponent $\frac{1}{2}+\varepsilon$ to $1-\delta$. The question appears as Problem 20.23 in the Kourovka Notebook \cite{KM22}. We answer the question in the affirmative.

\begin{theorem}\label{th:main}
There exist constants $\delta>0$ and $n_{0}$ such that, for any $n\geq n_{0}$ and any three conjugacy classes $C_{1},C_{2},C_{3}$ of $G=\mathrm{Alt}(n)$ with $|C_{i}|\geq|G|^{1-\delta}$, we have $C_{1}C_{2}C_{3}=G$.
\end{theorem}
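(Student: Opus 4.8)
The plan is to reduce, via a counting argument, to a statement about the product of just two of the classes, and then to prove that statement by constructing the required factorisations of permutations in a regime with a great deal of slack.

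\textbf{Structure of the large classes.} If a permutation has $m_i$ cycles of length $i$, its centraliser has order $z:=\prod_i i^{m_i}m_i!$, so $|C_i|\ge|G|^{1-\delta}$ translates into $\sum_i(m_i\log i+\log m_i!)\le(\delta+o(1))n\log n$. Elementary estimates together with Stirling then yield, for a representative $\sigma_i\in C_i$: (a) $\sigma_i$ has at most $O(\delta n)$ cycles, hence moves at least $(1-O(\delta))n$ points, i.e.\ $d(\sigma_i):=n-c(\sigma_i)\ge(1-O(\delta))n$; and (b) for every constant $t$ at most $O_t(\delta n)$ points lie in cycles of length $\le t$, so after fixing a large $t$ and then $\delta$ small in terms of $t$, almost all of $[n]$ is covered by long cycles. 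I fix $\delta$ to be a suitably small absolute constant from here on; the key point is that $d(\sigma_i)/n\to1$, uniformly.

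\textbf{Reduction to two classes.} It is enough to prove that \emph{for all large classes $C_1,C_2$ of $\mathrm{Alt}(n)$ the set $C_1C_2$ contains every $x\in\mathrm{Alt}(n)$ with $c(x)<n/100$.} Indeed, a standard bound on cycle numbers gives $\#\{x\in S_n:c(x)\ge n/100\}\le(n!)^{1-c}$ for some absolute $c>0$ and all large $n$, and this is smaller than $|C_3|\ge(n!/2)^{1-\delta}$ as soon as $\delta<c$; since $c_3\mapsto gc_3^{-1}$ is injective on $C_3$, for any $g\in\mathrm{Alt}(n)$ there is $c_3\in C_3$ with $c(gc_3^{-1})<n/100$, whence $gc_3^{-1}=c_1c_2$ with $c_i\in C_i$ by this claim and $g=c_1c_2c_3\in C_1C_2C_3$. (The same works with $C_1,C_2,C_3$ permuted, which provides extra freedom for the split-class bookkeeping below.)

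\textbf{The core: building the factorisations.} Given $x\in\mathrm{Alt}(n)$ with $c(x)<n/100$, I must exhibit $\sigma_i\in C_i$ ($i=1,2$) with $\sigma_1\sigma_2=x$, i.e.\ a transitive factorisation $\sigma_1\sigma_2\sigma_3=1$ with $\sigma_1,\sigma_2$ of the cycle types of $C_1,C_2$ and $\sigma_3$ of the cycle type of $x^{-1}$. By Riemann--Hurwitz the genus of such a factorisation satisfies $2g-2=-2n+d(\sigma_1)+d(\sigma_2)+d(\sigma_3)$, and since $d(\sigma_1),d(\sigma_2)\ge(1-O(\delta))n$ and $d(\sigma_3)=d(x)>\tfrac{99}{100}n$, the genus grows linearly in $n$: we are in the maximally unobstructed, high-genus range. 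I would construct the factorisation explicitly. Begin from a base factorisation whose product is a single $n$-cycle (if $n$ is odd) or an $(n-1)$-cycle with one fixed point (if $n$ is even) into the prescribed cycle types $\lambda_1,\lambda_2$ of $C_1,C_2$; such a factorisation exists because $d(\sigma_1)+d(\sigma_2)\ge n-1$, which is precisely the classical, unobstructed condition for an $n$-cycle to be a product of permutations of types $\lambda_1$ and $\lambda_2$. Then perform a controlled sequence of transposition surgeries: replacing $\sigma_2$ by a conjugate $s\sigma_2s^{-1}$ with $s$ a transposition leaves the cycle types of $\sigma_1$ and of $\sigma_2$ unchanged while altering the cycle type of the product $\sigma_1\sigma_2$ by at most two elementary cut/merge steps, so by composing such moves --- first cutting until the number of cycles of the product reaches $c(x)$, then rearranging cycle lengths --- one walks the product's cycle type from that of the base to that of $x$. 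The linear amount of genus slack is exactly what guarantees that the required cut or merge is always available and that the walk never gets stuck. Finally, when $x$ (or one of $C_1,C_2$) has a cycle type that splits into two classes in $\mathrm{Alt}(n)$, one must land in the prescribed class: if $C_1,C_2$ are both non-split this is automatic, since then $C_1C_2$ is invariant under conjugation by odd permutations; otherwise it is arranged by an extra local modification built into the construction.

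\textbf{The main obstacle.} All of the difficulty is concentrated in the last step. Large conjugacy classes of $\mathrm{Alt}(n)$ differ wildly in shape --- a single $n$-cycle; $\Theta(\delta n)$ cycles of length $\Theta(1/\delta)$; $\Theta(\sqrt n)$ cycles of pairwise distinct lengths; anything in between --- and the base factorisation together with the surgery must be carried out uniformly across all of them, reaching every target cycle type with fewer than $n/100$ cycles (including those within one or two cut/merge steps of an $n$-cycle) and, in the split case, the correct one of the two $\mathrm{Alt}(n)$-classes. Keeping two of the three cycle types frozen while steering the third, and checking that the procedure always terminates successfully, is where essentially all the work lies; by contrast the structural estimates, the counting reduction, and the Riemann--Hurwitz bookkeeping are routine.
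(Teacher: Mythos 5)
Your reduction to the two-class statement is sound and is close in spirit to the paper's: the bound $\#\{x\in\mathrm{Sym}(n):c(x)\ge n/100\}\le(n!)^{1-1/100+o(1)}$ is correct (optimize $x^{-m}\prod_{j<n}(x+j)$ at $x=\varepsilon n$), so for every $g$ some $gc_3^{-1}$ has few cycles, and you are left with showing that $C_1C_2$ contains every permutation with fewer than $n/100$ cycles. One wrinkle: this forces the target class to have size only $\ge|G|^{1-1/100-o(1)}$ while $C_1,C_2$ must have size $\ge|G|^{1-\delta}$ with $\delta<1/100$, so you need an asymmetric version of the two-class covering statement rather than the symmetric one; the paper sidesteps this by handling targets of intermediate size with Dvir's theorem on covering long cycles.

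The genuine gap is the core construction. You assert that, starting from a base factorization of an $n$-cycle into types $\lambda_1,\lambda_2$ and performing ``transposition surgeries'' $\sigma_2\mapsto s\sigma_2 s^{-1}$, one can walk the cycle type of the product to that of an arbitrary $x$ with $c(x)<n/100$, and that ``the linear amount of genus slack is exactly what guarantees that the walk never gets stuck.'' That sentence is the entire theorem. A single surgery multiplies the product by a product of two transpositions whose supports are tied to $\sigma_2$, so which cut/merge moves are actually available at a given stage depends delicately on how the cycles of $\sigma_1$, $\sigma_2$, and the current product interleave; nothing in the genus count shows that the particular cut or merge you need is realizable at that moment, let alone that the exact multiset of target cycle lengths (not merely the cycle count) is reachable, uniformly over all shapes of $\lambda_1,\lambda_2$ (a single $n$-cycle, $\Theta(\delta n)$ bounded cycles, $\Theta(\sqrt n)$ pairwise distinct lengths, and everything in between). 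Likewise, the ``extra local modification'' for split classes hides a parity constraint that must be tracked through every step of the walk. This is precisely where the paper spends Sections 4--12: seven successive reductions of the cycle structures, an ``alignment'' invariant guaranteeing that at each gluing step the three cycles involved share enough common values to carry out the surgery, and explicit base solutions verified to propagate that invariant. Your proposal correctly locates the difficulty but does not resolve it.
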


One can quickly drop the assumption $n\geq n_{0}$ and extend the result to normal subsets. Together with the corresponding result for groups of Lie type (see \cite[Thm.~1.3]{MP21} and \cite[Thm.~7.4]{LST24}), we obtain the following more general theorem.

\begin{theorem}\label{th:mainmore}
There exists a constant $\delta>0$ such that, for any non-abelian finite simple group $G$ and any three normal subsets $S_{1},S_{2},S_{3}\subseteq G$ with $|S_{i}|\geq|G|^{1-\delta}$, we have $S_{1}S_{2}S_{3}=G$.
\end{theorem}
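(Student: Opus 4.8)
The plan is to deduce Theorem~\ref{th:mainmore} from Theorem~\ref{th:main} together with the Lie-type results of \cite[Thm.~1.3]{MP21} and \cite[Thm.~7.4]{LST24}: by the classification of finite simple groups, every non-abelian finite simple group is an alternating group, a group of Lie type, or one of the $26$ sporadic groups, and I will exhibit a value of $\delta$ valid on each of these families, then take the minimum of the finitely many constants so obtained.

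For $G$ of Lie type, \cite[Thm.~1.3]{MP21} and \cite[Thm.~7.4]{LST24} directly supply a constant $\delta_{\mathrm{Lie}}>0$ with the desired property, possibly up to finitely many small exceptions, which I fold into the finite set $\mathcal{F}$ below. For $G=\mathrm{Alt}(n)$ with $n$ large, let $\delta_{0}$ be the constant of Theorem~\ref{th:main}; I claim $\delta_{\mathrm{Alt}}:=\delta_{0}/2$ works once $n$ is large enough. Indeed, a normal subset is a disjoint union of conjugacy classes, so any normal $S\subseteq G$ contains a conjugacy class of size at least $|S|/k(G)$, where $k(G)$ is the class number. For $\mathrm{Alt}(n)$ one has $k(G)\leq 2p(n)=e^{O(\sqrt n)}$, whereas $|G|=n!/2$ grows super-exponentially, so $k(G)\leq|G|^{\delta_{0}/2}$ for $n$ large; then each normal $S_{i}$ with $|S_{i}|\geq|G|^{1-\delta_{0}/2}$ contains a conjugacy class $C_{i}$ with $|C_{i}|\geq|G|^{1-\delta_{0}/2}/k(G)\geq|G|^{1-\delta_{0}}$, whence $S_{1}S_{2}S_{3}\supseteq C_{1}C_{2}C_{3}=G$ by Theorem~\ref{th:main}. (Here one enlarges $n_{0}$ as needed; the remaining small $n$ are handled next, which also removes the hypothesis $n\geq n_{0}$.)

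It remains to treat a \emph{finite} set $\mathcal{F}$ of groups: the $26$ sporadic groups, the finitely many small Lie-type exceptions above, and $\mathrm{Alt}(n)$ for $5\leq n<n_{0}$. No structure is required here: for any finite group $G$ of order $m$, any $A,B\subseteq G$, and any $g\in G$, one has $|A\cap gB^{-1}|\geq|A|+|B|-m$, so $|A|+|B|>m$ forces $AB=G$; taking $A=S_{1}$, $B=S_{2}$ and multiplying on the right by the nonempty set $S_{3}$ shows the conclusion holds for $G$ as soon as $2|G|^{1-\delta}>|G|$, i.e.\ $\delta<\log 2/\log|G|$. So for each $G\in\mathcal{F}$ set $\delta_{G}:=\tfrac12\log 2/\log|G|>0$, and finally put
\[
\delta:=\min\Bigl(\delta_{\mathrm{Lie}},\ \delta_{\mathrm{Alt}},\ \min_{G\in\mathcal{F}}\delta_{G}\Bigr)>0 .
\]

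I expect no genuine obstacle: all the substance lies in Theorem~\ref{th:main} and in the cited Lie-type theorems. The one point requiring care is the passage from arbitrary normal subsets to single conjugacy classes for large alternating groups, which hinges purely on the gap between the sub-exponential class number $k(\mathrm{Alt}(n))=e^{O(\sqrt n)}$ and the factorial size of $\mathrm{Alt}(n)$; for Lie type one quotes \cite{MP21,LST24} in their normal-subset form, so no analogous reduction is needed there, and the sporadic and small-degree cases are entirely elementary.
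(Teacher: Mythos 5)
Your proposal is correct and follows essentially the same route as the paper: reduce to Theorem~\ref{th:main} for large $\mathrm{Alt}(n)$ by extracting from each normal subset a conjugacy class of size at least $|S_i|/k(G)$ with $k(\mathrm{Alt}(n))\leq 2p(n)=e^{O(\sqrt n)}\leq|G|^{\delta_0/2}$, quote \cite{MP21}/\cite{LST24} for the remaining simple groups, and dispose of the finitely many leftover groups trivially. The only (immaterial) difference is in that last step: you use the counting fact $|S_1|+|S_2|>|G|\Rightarrow S_1S_2=G$, while the paper shrinks $\delta$ so that the hypothesis forces $S_i=G$ outright.
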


Most of the paper is devoted to proving the following intermediate result.

\begin{theorem}\label{th:main2}
There exist constants $\delta>0$ and $n_{0}$ such that, for any $n\geq n_{0}$ and any three conjugacy classes $C_{1},C_{2},C_{3}$ of $G=\mathrm{Alt}(n)$ with $|C_{i}|\geq|G|^{1-\delta}$, we have $C_{1}C_{2}\supseteq C_{3}$.
\end{theorem}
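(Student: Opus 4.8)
The plan is to argue throughout in terms of cycle types, in three stages: a structural analysis of what ``large'' means, a chain of reductions to a normal form, and an explicit combinatorial construction.

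\emph{Stage 1 (what ``large'' means).} Writing $z(\lambda)=\prod_{j\ge1}j^{m_j}m_j!$ for the centralizer order in $\mathrm{Sym}(n)$ of a permutation of cycle type $\lambda=(1^{m_1}2^{m_2}\cdots)$, the hypothesis $|C_i|\ge|G|^{1-\delta}$ reads $z(\lambda_i)\lesssim(n!)^{\delta}$, which forces each $\lambda_i$ to be ``dense'': the number of fixed points, and more generally $\sum_{j\le L}m_j$ for any fixed $L$, is $O_L(\delta n)$; the number of moved points is $(1-O(\delta))n$; the total number of cycles is $O(\delta n)$, hence far below the genus threshold $n+2$; and there is a large reservoir of long cycles that may be cut into, or assembled from, shorter pieces. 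I would extract from the three classes exactly the combinatorial data the construction needs---the short-cycle multiplicities and the size of this reservoir---and fix $\delta$ small and $n_0$ large enough that all the resulting ``budgets'' stay comfortably positive in everything that follows.

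\emph{Stage 2 (reductions).} The statement is equivalent to asserting that the product of any two classes of size $\ge|G|^{1-\delta}$ contains every class of that size; in particular we may permute and invert the three classes freely. I would then dispose of the passage between $\mathrm{Sym}(n)$ and $\mathrm{Alt}(n)$: parities are automatically compatible because all classes are large, a large $\mathrm{Sym}(n)$-class meets $\mathrm{Alt}(n)$ in a class of comparable size, and so it suffices to work with cycle types while carrying one extra ``half'' bit wherever a class of $\mathrm{Alt}(n)$ splits. Finally, using the long-cycle reservoir, a sequence of reductions would replace each $C_i$ by a class of a more uniform shape---with few exceptional short cycles (in particular few fixed points) and the bulk of the support carried by long cycles of freely tunable lengths---each reduction being sound because the small discrepancy between a cycle type and its normalization can be absorbed inside the construction.

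\emph{Stage 3 (the construction, the heart of the matter).} For the normalized triple and a given $g\in C_3$, I would lay $\{1,\dots,n\}$ out along the cycles of $g$ and define $c_1$ and $c_2$ point by point---prescribing, on each cycle of $g$ and across cycles where needed, how the orbits of $c_1$ and $c_2$ interleave---so that $c_1c_2=g$ holds on the nose while the cycle structures of $c_1,c_2$ are assembled from prescribed building blocks. This is a quantitative sharpening of the classical picture of when a permutation is a product of two permutations of given cycle types (in the spirit of Boccara and Bertram): the genus constraint $c(c_1)+c(c_2)+c(g)\le n+2$ is never binding and parity is built in, so the real work is to make the cycle types of $c_1,c_2$ match those of $C_1,C_2$ \emph{exactly}---adjusting the multiplicities $m_j(c_1),m_j(c_2)$ one length at a time against the budgets---and to land in the prescribed half of $\mathrm{Alt}(n)$. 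I expect this last matching to be the main obstacle: realizing $c_1$ and $c_2$ with the \emph{precise} cycle types of $C_1$ and $C_2$ is a simultaneous balancing problem over all small cycle lengths, rigidly coupled to $c_1c_2=g$, which has to be solved uniformly as $g$ ranges over \emph{every} large class---including degenerate ones with very few cycles, or with all cycle lengths sharing a common divisor, where imprimitivity-type obstructions must be excluded separately. Isolating and handling those degenerate targets, and threading a single small $\delta$ through the whole chain of reductions so that no budget goes negative, is where I expect most of the technical effort to lie; and since the resulting construction is explicit, it also yields the constructive algorithm advertised in the abstract.
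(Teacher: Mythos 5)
Your Stage 1 is sound and matches the paper's Proposition~\ref{pr:gmcycles} and Corollary~\ref{co:classbig} (size of a class versus its number of cycles), and the symmetry observations in Stage 2 are exactly Remark~\ref{re:littleplay}. But the proposal has a genuine gap: Stage 3, which you yourself flag as ``the heart of the matter,'' is not actually carried out. You describe the problem to be solved --- realize $c_{1}c_{2}=g$ with the cycle types of $c_{1},c_{2}$ matching $C_{1},C_{2}$ \emph{exactly}, uniformly over all large targets $g$ --- and correctly predict that the simultaneous matching of all cycle multiplicities is the main obstacle, but you give no construction, no induction, and no invariant that would make the ``point by point'' definition terminate with the right cycle types. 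The genus inequality and parity are necessary conditions only; there is no known clean sufficient criterion for $g\in C_{1}C_{2}$ for arbitrary cycle types, and that is precisely why the paper needs its thirty-odd pages of machinery (string triples, the alignment properties $\mathfrak{C}_{1}$--$\mathfrak{C}_{6}$, and seven invertible reductions) rather than a direct filling-in argument. As written, the proposal defers the entire substance of the theorem to a step it does not perform.

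It is also worth noting that the paper deliberately does \emph{not} take the route you sketch of fixing $g\in C_{3}$ and building $c_{1},c_{2}$ along its cycles. Instead it treats the three classes symmetrically, aligns their cycle structures along a common string of positions, solves tiny local subproblems explicitly (equations \eqref{eq:baby1}--\eqref{eq:baby4}), and then undoes the reductions by modifying \emph{two} of the three elements at a time while preserving $\alpha_{1}\alpha_{2}=\alpha_{3}$. The reason is operational: gluing two cycles of a factor into one (while compensating in another factor) is cheap, but splitting a cycle into two pieces of \emph{exactly} prescribed lengths --- which your fix-$g$ approach would repeatedly require when adjusting the multiplicities $m_{j}(c_{1}),m_{j}(c_{2})$ --- demands far more control than the product relation alone provides (see the footnote at the start of \S 9). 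Any workable version of your Stage 3 would have to confront this asymmetry, and the proposal does not.
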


Again we can drop the assumption $n\geq n_{0}$, but we cannot extend Theorem~\ref{th:main2} to normal subsets: taking $S_{1}=C_{1}$, $S_{2}=C_{2}$ with $C_{1}\neq C_{2}^{-1}$, and $S_{3}=C_{3}\cup\{e\}$ provides an easy counterexample.

\subsection{Relation to previous literature}\label{se:literature}

Theorem~\ref{th:main} improves in full or in part several existing results and complements our knowledge of many others. We mention some of them, although the literature is too rich for the list below to be exhaustive.
\smallskip

\textit{Many classes.} Famously by \cite[Thm.~1.1]{LS01}, for any class $C$ in a non-abelian finite simple group $G$ we have $C^{k}=G$ for $k=O(\log|G|/\log|C|)$, so in particular when $|C|\geq|G|^{\eta}$ there is a constant $k_{\eta}$ such that $C^{k_{\eta}}=G$. By \cite[Thm.~1.3]{MP21} we can take $k_{\eta}=8$ for some $\eta=1-\delta$, and by \cite[Thm.~1.1]{GM21} for $G=\mathrm{Alt}(n)$ large enough we can take $k_{\eta}=4$ for any $\eta=\frac{1}{2}+\varepsilon$. Theorem~\ref{th:main} shows that $k_{\eta}=3$ holds for $\mathrm{Alt}(n)$ and some $\eta=1-\delta$.

Our result is tight in the sense that $k_{1-\delta}=2$ is not achievable, even if we aim to cover $G\setminus\{e\}$ rather than $G$. In fact, for $G=\mathrm{Alt}(n)$ and $n$ large, using Proposition~\ref{pr:gmcycles} there exist three classes of size $1<|C_{1}|<|G|^{\frac{1}{2}\delta}$, $|G|^{1-\delta}<|C_{2}|<|G|^{1-\frac{3}{4}\delta}$, $|C_{3}|>|G|^{1-\frac{1}{4}\delta}$. This means that $C_{2}^{-1}C_{1}\not\supseteq C_{3}$, implying in turn by Remark~\ref{re:littleplay} that $C_{2}C_{3}\not\supseteq C_{1}$. Hence, $|C_{3}|>|C_{2}|>|G|^{1-\delta}$ and $C_{2}C_{3}\subsetneq G\setminus\{e\}$.
\smallskip

\textit{Two large classes.} There are known nontrivial choices of $\eta=1-o(1)$ for which $C^{2}\supseteq\mathrm{Alt}(n)\setminus\{e\}$ whenever $|C|\geq|\mathrm{Alt}(n)|^{\eta}$ (asking the same for every $G$ simple would be stronger than Thompson's conjecture). By \cite[Thm.~1.5]{KLS24} it is enough to ask $|C|\geq e^{-n^{\alpha}}|\mathrm{Alt}(n)|$ for any  $0<\alpha<\frac{2}{5}$ and $n$ large, improving on \cite[Cor.~2.6]{LM23} and \cite[Thm.~4]{LT23}.
\smallskip

\textit{Special classes.} We have results for classes with specific properties. By \cite[Thm.~2.3]{Rod02}, for any three classes $C_{1},C_{2},C_{3}\subseteq\mathrm{Alt}(n)$ such that there are at most $6$ cycles among all three, we have $C_{1}C_{2}C_{3}=\mathrm{Alt}(n)$.

There are also many known sufficient conditions on $C$ in order to have $C^{2}\supseteq\mathrm{Alt}(n)\setminus\{e\}$. This is by no means a rare occurrence: for any $\alpha<\frac{1}{4}$ and $n$ large, by \cite[Thm.~1.13]{LS08} picking a random $\sigma\in\mathrm{Sym}(n)$ and taking $C=\sigma^{\mathrm{Sym}(n)}$ yields $C^{2}=\mathrm{Alt}(n)$ with probability $\geq 1-e^{-n^{\alpha}}$. Using the orbit growth and cycle growth sequences $E(\sigma),B(\sigma)$, one can create conditions on the number of cycles of $\sigma$ of given lengths so that its class $C$ satisfies $C^{2}=\mathrm{Alt}(n)$: see for instance \cite[Thms.~1.9--1.10]{LS08} and \cite[Thm.~1.3]{KLS24}. By \cite[Thm.~3]{LT23}, when $C\subseteq\mathrm{Alt}(n)$ is such that $C^{\mathrm{Sym}(n)}$ is the union of two\footnote{See Proposition~\ref{pr:cyclestr}, which is elementary and classical. These classes are called \textit{split} in \cite[\S 5.1]{FH04} and \textit{exceptional} in \cite[\S 4]{GM21}.} distinct classes $C,C'$ of $\mathrm{Alt}(n)$, both $C^{2}$ and $CC'$ contain $\mathrm{Alt}(n)\setminus\{e\}$.

Yet more results are available if we work with classes of $l$-cycles. By \cite[Thm.~3.4]{HKL08} and \cite[Thm.~A]{KKM24} there are near-exact expressions for $n(k,l)$, the largest $n$ such that every $\sigma\in\mathrm{Alt}(n)$ is the product of $k$ many $l$-cycles; the particular case of four $\left(\left\lfloor\frac{3}{8}n\right\rfloor+1\right)$-cycles was given already in \cite[Cor.~2.4]{Ber72} (and it surpasses the $\frac{1}{2}+\varepsilon$ threshold of \cite{GM21}). By \cite[Thm.~7]{HKL04}, for any $\sigma\in\mathrm{Sym}(n)$ and any $C_{1},C_{2}\subseteq\mathrm{Sym}(n)$ made of $l_{i}$-cycles, there are necessary and sufficient conditions in terms of the support and the number of cycles of $\sigma$ in order to decide whether $\sigma\in C_{1}C_{2}$. By \cite[Thm.~5.1]{Dvi85} we know when $C_{1}C_{2}$ contains the class $O_{l}$ of $l$-cycles (see Proposition~\ref{pr:dvir} below), and by \cite[Cors.~2.1--3.1]{Ber72} we know when $(O_{l})^{2}$ (resp.\ $O_{l}O_{l+1}$) contains $\mathrm{Alt}(n)$ (resp.\ $\mathrm{Sym}(n)\setminus\mathrm{Alt}(n)$).
\smallskip

\textit{Groups of Lie type.} As already mentioned before Theorem~\ref{th:mainmore}, Theorem~\ref{th:main} complements the corresponding result for groups of Lie type. By \cite[Thm.~1.3]{MP21}, or by \cite[Thm.~7.4]{LST24} in the special case of classical groups, if $G$ is a finite simple group of Lie type of large enough rank and we have three classes with $|C_{i}|\geq|G|^{1-\delta}$, then $C_{1}C_{2}C_{3}=G$.

In fact, in this case we know even more, namely that there is good mixing: by \cite[Thm.~1.1]{FM25}, for any $r>16$, any two classes $C_{1},C_{2}$, and any set $A$ with $|C_{1}||C_{2}||A|\geq r|G|^{3-\delta}$, the probability of having $c_{1}c_{2}\in A$ by picking $c_{i}\in C_{i}$ randomly is bounded in the range $\left(1\pm\frac{1}{\sqrt{r}}\right)|A|/|G|$. See the next subsection for comments about mixing in $\mathrm{Alt}(n)$.
\smallskip

\subsection{Remarks on tightness and constructibility}

We make a few observations on these two points.
\smallskip

\textit{Tightness.} As explained in \S\ref{se:literature}, a version of Theorem~\ref{th:main} with two classes cannot hold, even avoiding the identity element.

Our proof does not provide explicit values for $\delta,n_{0}$ but they can be computed in principle, keeping track of quantities in the proof of Theorem~\ref{th:main2} and in parts of \cite{Dvi85} and \cite{GM21} (which enter our computations through Propositions~\ref{pr:gmcycles}--\ref{pr:dvir}). It is known that we cannot take $\delta>\frac{1}{2}$: even for a single class $C$, \cite[Lemma~3.06]{Bre78} shows that for any $\varepsilon>0$ and $n$ large we have $C^{3}\not\supseteq\mathrm{Alt}(n)\setminus\{e\}$ for at least one class of size $|C|\geq|\mathrm{Alt}(n)|^{\frac{1}{2}-\varepsilon}$. Almost surely, if $\delta$ were to be made explicit in our proof, it would not be the sharpest possible value: one look at \cite[Figs.~1--2]{Ber72} should make clear the amount of work one needs to arrange cycles to achieve sharp values by elementary means, even in the simplest case.

Although Theorem~\ref{th:main} can be extended to normal sets, one cannot further relax the condition to allow general sets: by \cite[Thm.~6.2]{Ked09}, for any large finite group $G$ with a transitive action on $\{1,\ldots,n\}$ there are three sets $A,B,C\subseteq G$ with $|A||B||C|\geq\frac{1}{3n}|G|^{3}$ and such that $ABC\neq G$. The so-called Gowers trick however allows us to get close to that, and even achieve good mixing: see \cite[Cor.~1]{NP11}.

Another way in which Theorem~\ref{th:main} is tight is that, unlike for groups of Lie type, in $G=\mathrm{Alt}(n)$ three classes of size $\geq|G|^{1-\delta}$ do not necessarily mix well. Even more strikingly, for any given $\delta>0$, the class $C$ made of one $\lfloor(1-\delta)n\rfloor$-cycle and $\lceil\delta n\rceil$ fixed points, which by Proposition~\ref{pr:gmcycles} has size $\geq|G|^{1-2\delta}$ for $n$ large, does not even have \textit{constant} mixing time when $n\rightarrow\infty$: the $L^{1}$-mixing time for $C$ is bounded above and below by functions of order $\log n$ by \cite[Thm.~6.1]{Roi96b}, and thus the $L^{\infty}$-mixing time is also at least as large. See also results on $L^{2}$-mixing times in \cite[Thm.~2.4]{Vis98} and \cite[Thms.~1--2]{MS07}, and on $L^{1}$ and $L^{2}$-uniform distributions involving different sets in \cite[Thm.~6.1]{LS08} and \cite[Thm.~1.4]{LM23} respectively.
\smallskip

\textit{Constructibility.} Many of the aforementioned results on products of conjugacy classes rely on character theory. Characters have shown their power at least since \cite{LS01}, and are by now the standard tool for dealing with these problems. Power however comes at the cost of non-constructive proofs: showing that for given $g\in G$ there are $c_{i}\in C_{i}$ with $\prod_{i}c_{i}=g$ via these methods does not give a way to construct the $c_{i}$ themselves in the process. An example of the contrast between power and elegance on one side and the use of elementary constructive tools on the other is offered by \cite[\S 2 and \S 5]{DLR24}.

At least in comparison to $G$ of Lie type, the case $G=\mathrm{Alt}(n)$ ought not to be as dependent on character-theoretic tools\footnote{Thanks are due to Pham Huu Tiep, who knows immeasurably more about character theory, for supporting or at least not undermining this feeling of the author (personal communication).}. Elementary methods as the ones used in the past century should be the natural choice, and this is the avenue taken in this paper, which is independent from any character theory. The entire proof is elementary and constructive, and the process of finding $c_{i}$ such that $c_{1}c_{2}c_{3}=g$ is almost self-contained (the exception being Proposition~\ref{pr:dvir}). It is thus possible in principle to build a constructive algorithm outputting the $c_{i}$, given $g$ and the $C_{i}$ as inputs.

Our choice is not purely aesthetic. Computing the number of tuples of $c_{i}\in C_{i}$ satisfying $c_{1}c_{2}\ldots c_{k-1}=c_{k}$ through character methods involves bounding summands of the form $\chi(C_{1})\ldots\chi(C_{k-1})\chi(C_{k}^{-1})/\chi(1)^{k-2}$ for $\chi\in\mathrm{Irr}(G)$. The method is shown to succeed when $k\geq 4$, or when $k\leq 3$ and the classes $C_{i}$ have rather strong constraints on their cycle structure that make $\chi(C_{i})$ easier to compute, for instance analyzing the (exact) Frobenius formula given the cycles of $C_{i}$ and the parts of the partition $\lambda$ defining $\chi=\chi^{\lambda}$. Without those constraints the $\chi(C_{i})$ are too large, and without $k$ large they are not cancelled by the copies of $\chi(1)$ in the denominator; sometimes such a failure is not just an artifact of the method, but leads to concrete counterexamples, as in the non-mixing results of \cite[\S 8.1]{LM23}. It is unclear to the author whether, in the context of the present paper, the main theorem is inherently out of character theory's reach.

\subsection{Spirit of the proof of Thm.~\ref{th:main2}: a baby example}\label{se:example}

After two comparatively short sections on preliminary results (\S\ref{se:prelim}) and on reducing Theorems~\ref{th:main}--\ref{th:mainmore} to Theorem~\ref{th:main2} (\S\ref{se:maintomain}), the bulk of the paper is dedicated to proving Theorem~\ref{th:main2}. Since there are numerous steps and considerable notation, let us try and illustrate with an example the spirit behind the proof.

Suppose we want to build elements $\alpha_{i}\in C_{i}\subseteq\mathrm{Sym}(10)$ such that $\alpha_{1}\alpha_{2}=\alpha_{3}$, where $C_{1}$ is the class of elements made of a $4$-cycle and a $6$-cycle, and $C_{2}=C_{3}$ is the class of $10$-cycles. This is already enough to show that $C_{1}C_{2}\supseteq C_{3}$, thanks to Remark~\ref{re:littleplay}.

We know obvious solutions to simpler problems. For instance, if $k$ is odd then the square of a $k$-cycle is a $k$-cycle, so we know how to solve the problem for $C_{1}=C_{2}=C_{3}$ the class of $k$-cycles in $\mathrm{Sym}(k)$. From that, one may hope to build solutions to more complicated problems. For example, if we have $\alpha\in\mathrm{Sym}(k+h)$ made of two disjoint cycles of length $k$ and $h$, say $\alpha=(x_{1}\cdots x_{k})(y_{1}\cdots y_{h})$, then $\alpha(x_{k}\, y_{1})$ is a $(k+h)$-cycle. This banal observation is useful in that we may manipulate the cycle structure of \textit{two} of the $\alpha_{i}$ without touching the third: if $\alpha_{1}\alpha_{2}=\alpha_{3}$ then also $(\sigma\alpha_{1})\alpha_{2}=(\sigma\alpha_{3})$, $(\alpha_{1}\sigma)(\sigma^{-1}\alpha_{2})=\alpha_{3}$, and so on.

Returning to our baby example, we might try to break the $10$-cycles of $C_{2},C_{3}$ into pairs of $4$-cycles and $6$-cycles, and reduce ourselves to work in $\mathrm{Sym}(4)\times\mathrm{Sym}(6)$ in the hope of simplifying the problem, by glueing back the cycles later. The obvious flaw is that $4$ and $6$ are even, and the product of two $2k$-cycles is never a $2k$-cycle. However, we might confide for now in our ability to glue back more complicated configurations, and thus try and chip away one point from every cycle of length $4$ or $6$. Visually, our cycle manipulations so far look as follows:
\begin{align*}
\begin{aligned}
C_{1}: & & (\text{\textunderscore\ \textunderscore\ \textunderscore\ \textunderscore} )( \text{\textunderscore\ \textunderscore\ \textunderscore\ \textunderscore\ \textunderscore\ \textunderscore}) & & & & (\text{\textunderscore\ \textunderscore\ \textunderscore\ \textunderscore} )( \text{\textunderscore\ \textunderscore\ \textunderscore\ \textunderscore\ \textunderscore\ \textunderscore}) & & & & (\text{\textunderscore\ \textunderscore\ \textunderscore} )( \text{\textunderscore} )( \text{\textunderscore\ \textunderscore\ \textunderscore\ \textunderscore\ \textunderscore} )( \text{\textunderscore}) \\
C_{2}: & & (\text{\textunderscore\ \textunderscore\ \textunderscore\ \textunderscore} \hphantom{)(} \text{\textunderscore\ \textunderscore\ \textunderscore\ \textunderscore\ \textunderscore\ \textunderscore}) & & \longrightarrow & & (\text{\textunderscore\ \textunderscore\ \textunderscore\ \textunderscore} )( \text{\textunderscore\ \textunderscore\ \textunderscore\ \textunderscore\ \textunderscore\ \textunderscore}) & & \longrightarrow & & (\text{\textunderscore\ \textunderscore\ \textunderscore} )( \text{\textunderscore} )( \text{\textunderscore\ \textunderscore\ \textunderscore\ \textunderscore\ \textunderscore} )( \text{\textunderscore}) \\
C_{3}: & & (\text{\textunderscore\ \textunderscore\ \textunderscore\ \textunderscore}\hphantom{)(} \text{\textunderscore\ \textunderscore\ \textunderscore\ \textunderscore\ \textunderscore\ \textunderscore}) & & & & (\text{\textunderscore\ \textunderscore\ \textunderscore\ \textunderscore} )( \text{\textunderscore\ \textunderscore\ \textunderscore\ \textunderscore\ \textunderscore\ \textunderscore}) & & & & (\text{\textunderscore\ \textunderscore\ \textunderscore} )( \text{\textunderscore} )( \text{\textunderscore\ \textunderscore\ \textunderscore\ \textunderscore\ \textunderscore} )( \text{\textunderscore})
\end{aligned}
\end{align*}
Solutions for each subproblem are immediate, and we can put together $(1\, 2\, 3)^{2}=(1\, 3\, 2)$, $(4)^{2}=(4)$, and so on to compose a solution for the simplified problem on the right.

Once we have the simplified solution, we need to go backwards. We first need to glue back each $1$-cycle to the cycle to its left. Fortunately, there are elements that are common to all triples of cycles vertically aligned above (say, $1$ is shared by all $3$-cycles, et cetera), and playing with a handful of them is enough to do the trick, independently from the actual length or composition of the cycles themselves. After a few attempts, we can figure out that, using only the elements $3,4,9,10$, we may perform the following:
\begin{align*}
\begin{aligned}
\gamma_{1}:\!\! & & ( 1\, 2\, \mathbf{3} )( \mathbf{4} )( 5\, 6\, 7\, 8\, \mathbf{9} )( \mathbf{10} ) & & & & \gamma_{1}( 3\, 4 )( 9\, 10 ):\!\! & & ( 1\, 2\, \mathbf{4}\, \mathbf{3} )( 5\, 6\, 7\, 8\, \mathbf{10}\, \mathbf{9} ) \\
\gamma_{2}:\!\! & & ( 1\, 2\, \mathbf{3} )( \mathbf{4} )( 5\, 6\, 7\, 8\, \mathbf{9} )( \mathbf{10} ) & & \longrightarrow & & ( 3\, 9\, 4 )\gamma_{2}( 3\, 10\, 4 ):\!\! & & ( 1\, 2\, \mathbf{10}\, \mathbf{4} )( 5\, 6\, 7\, 8\, \mathbf{9}\, \mathbf{3} )\ \, \\
\gamma_{1}\gamma_{2}:\!\! & & ( 2\, 1\, \mathbf{3} )( \mathbf{4} )( 6\, 8\, 5\, 7\, \mathbf{9} )( \mathbf{10} ) & & & & \gamma_{1}( 4\, 9\, 10 )\gamma_{2}( 3\, 10\, 4 ):\!\! & & ( 2\, 1\, \mathbf{10}\, \mathbf{3} )( 6\, 8\, \mathbf{4}\, 5\, 7\, \mathbf{9} )\ \,
\end{aligned}
\end{align*}
Then we go backwards once more, and glue together the $4$-cycles and $6$-cycles in the second and third line. Even if the elements $3,4,9,10$ have been moved around, making some of them unusable, there are still enough other elements to allow the glueing technique we proposed at the start. Using $2,8$ we find that
\begin{align*}
\begin{aligned}
\beta_{1}:\!\! & & ( 4\, 3\, 1\, \mathbf{2} )( \mathbf{8}\, 10\, 9\, 5\, 6\, 7 ) & & & & \beta_{1}:\!\! & & ( 4\, 3\, 1\, \mathbf{2} )( \mathbf{8}\, 10\, 9\, 5\, 6\, 7 ) \\
\beta_{2}:\!\! & & ( 10\, 4\, 1\, \mathbf{2} )( \mathbf{8}\, 9\, 3\, 5\, 6\, 7 )\ \, & & \longrightarrow & & \beta_{2}(2\, 8):\!\! & & ( 10\, 4\, 1\, \mathbf{8}\, 9\, 3\, 5\, 6\, 7\, \mathbf{2} )\ \, \\
\beta_{1}\beta_{2}:\!\! & & ( 1\, 10\, 3\, \mathbf{2} )( \mathbf{8}\, 4\, 5\, 7\, 9\, 6 )\ \, & & & & \beta_{1}\beta_{2}(2\, 8):\!\! & & ( 1\, 10\, 3\, \mathbf{8}\, 4\, 5\, 7\, 9\, 6\, \mathbf{2} )\ \,
\end{aligned}
\end{align*}
and we are done.

The complete proof of Theorem~\ref{th:main2}, which spans \S\S\ref{se:re1}--\ref{se:solutions} (with a preparatory \S\ref{se:notation} on notation), is a more elaborate version of the example above. We progressively reduce the problem into more manageable subproblems, making sure at each step that finding a solution for the simpler problem means finding a solution for the more complicated one. We identify seven reductions of that sort in \S\S\ref{se:re1}--\ref{se:re7}, one per section: especially from \S\ref{se:re3} the structure becomes evident, with one lemma dedicated to reducing the problem and one lemma dedicated to transforming the solution in each section. In \S\ref{se:solutions}, we find appropriate solutions for the final reduction.

To use the language of later sections for the example above: looking at $C_{i}$ as strings of $(\text{\textunderscore\ \textunderscore\ \textunderscore\ \textunderscore} )$ corresponds to working with \textit{class strings} (\S\ref{se:notation}); the two cycle manipulations are performed in the course of achieving \textit{Reduction~\ref{re:6}} (Lemma~\ref{le:theta6}) and \textit{Reduction~\ref{re:7}} (Lemma~\ref{le:theta7}); vertically aligned cycles \textit{share positions}, and the chosen solutions have common values in them by property $\mathfrak{C}_{1}$ (Definition~\ref{de:aligned}); the solutions are transformed backwards in the course of undoing the two reductions (Lemmas~\ref{le:7to6} and~\ref{le:6to5}).

\section{Preliminaries}\label{se:prelim}

Let us collect here known structural results on conjugacy classes of $\mathrm{Alt}(n)$ and $\mathrm{Sym}(n)$. Before those however, here is a trivial remark.

\begin{remark}\label{re:littleplay}
For any group $G$ and any three conjugacy classes $C_{1},C_{2},C_{3}$, one has $C_{1}C_{2}\supseteq C_{3}$ if and only if there are $\alpha_{i}\in C_{i}$ such that $\alpha_{1}\alpha_{2}=\alpha_{3}$. The ``only if'' direction is obvious for every triple of sets, the ``if'' direction uses the fact that the $C_{i}$ are classes. Another consequence is that $C_{1}C_{2}\supseteq C_{3}$ implies $C_{1}^{-1}C_{3}\supseteq C_{2}$.
\end{remark}

For $x,y\in G$ we write $x^{y}:=y^{-1}xy$. Similarly for $S\subseteq G$ we write $x^{S}:=\{x^{y}|y\in S\}$ and $S^{y}:=\{x^{y}|x\in S\}$. In particular, $x^{G}$ is the conjugacy class of $x$ in $G$.

For $x\in\mathrm{Sym}(n)$, we can write $x$ as product of disjoint cycles, and this product is unique up to $1$-cycles and ordering. For the rest of the paper, we consider the decomposition that includes all the $1$-cycles, so that all $n$ elements appear in the decomposition of $x$. For $1\leq i\leq n$, let $n_{i}(x)$ be the number of cycles of length $i$ in the decomposition. By \textit{cycle structure} of $x$ we mean the sequence $(n_{i}(x))_{1\leq i\leq n}$. The following statement sums up a few textbook facts on elements and classes.

\begin{proposition}\label{pr:cyclestr}
Let $x\in\mathrm{Sym}(n)$. Then $x\in\mathrm{Alt}(n)$ if and only if $\sum_{\text{$i$ even}}n_{i}(x)$ is even.

The conjugacy classes of $\mathrm{Sym}(n)$ are in bijection with the possible cycle structures of its elements. For any $x,y\in\mathrm{Sym}(n)$, $y\in x^{\mathrm{Sym}(n)}$ if and only if $n_{i}(x)=n_{i}(y)$ for all $i$.

Let $x\in\mathrm{Alt}(n)$. If $x$ has either a cycle of even length or two cycles of equal odd length (including length $1$), then $x^{\mathrm{Alt}(n)}=x^{\mathrm{Sym}(n)}$. Otherwise, $x^{\mathrm{Sym}(n)}$ is the union of two distinct classes $x^{\mathrm{Alt}(n)}$ and $(x^{\mathrm{Alt}(n)})^{g}$, where we can choose any $g\in\mathrm{Sym}(n)\setminus\mathrm{Alt}(n)$.
\end{proposition}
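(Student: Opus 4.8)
The plan is to dispatch the three assertions one at a time; each is classical and admits a short self-contained argument, so the work is mostly bookkeeping.

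\emph{First assertion.} An $i$-cycle is a product of $i-1$ transpositions, so it has sign $(-1)^{i-1}$, whence $\mathrm{sgn}(x)=\prod_{i}(-1)^{(i-1)n_{i}(x)}=(-1)^{\sum_{i}(i-1)n_{i}(x)}$. Since $i-1$ is odd precisely when $i$ is even, the exponent is congruent modulo $2$ to $\sum_{i\text{ even}}n_{i}(x)$, and $x\in\mathrm{Alt}(n)$ exactly when that sum is even.

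\emph{Second assertion.} Using the convention $x^{y}=y^{-1}xy$, conjugation by $y$ carries the cycle $(a_{1}\,\cdots\,a_{k})$ to $(y^{-1}(a_{1})\,\cdots\,y^{-1}(a_{k}))$, so it preserves the cycle structure; hence $y\in x^{\mathrm{Sym}(n)}$ forces $n_{i}(x)=n_{i}(y)$ for all $i$. For the converse, given $x,y$ with the same cycle structure, line up their disjoint cycle decompositions (both of which use all $n$ points) and let $g$ be the permutation sending the $j$th entry of the $m$th cycle of $y$ to the $j$th entry of the $m$th cycle of $x$; then $x^{g}=y$. Finally, the attainable cycle structures are exactly the sequences $(n_{i})_{1\le i\le n}$ of nonnegative integers with $\sum_{i}i\,n_{i}=n$, a finite set, which yields the stated bijection with conjugacy classes.

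\emph{Third assertion.} Write $A=\mathrm{Alt}(n)$, fix $x\in A$, and put $Z=C_{\mathrm{Sym}(n)}(x)$. For any $g\in\mathrm{Sym}(n)\setminus A$ we have $\mathrm{Sym}(n)=A\sqcup Ag$, hence $x^{\mathrm{Sym}(n)}=x^{A}\cup(x^{A})^{g}$; since distinct $A$-classes are disjoint, $x^{\mathrm{Sym}(n)}$ is either a single $A$-class (when $(x^{A})^{g}=x^{A}$) or exactly two of them, namely $x^{A}$ and $(x^{A})^{g}$. Moreover $(x^{A})^{g}=x^{A}$ holds iff $x^{hg}=x$ for some $h\in A$, i.e.\ iff $Z$ meets $Ag$, i.e.\ iff $Z\not\subseteq A$. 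So the problem reduces to showing that $Z$ contains an odd permutation if and only if $x$ has a cycle of even length or two cycles of equal odd length. If $x$ has an $i$-cycle with $i$ even, that cycle is itself an odd permutation commuting with $x$. If $x$ has two $i$-cycles with $i$ odd, the permutation exchanging them entry by entry is a product of $i$ transpositions, hence odd, and commutes with $x$. Conversely, assume $x$ has no even cycle and no two cycles of equal odd length, so its cycle lengths are pairwise distinct and odd; any $h\in Z$ permutes the cycles of $x$ preserving length, hence stabilizes each one setwise (in particular the unique fixed point, if present), and on the support of an $i$-cycle it commutes with that cycle, hence equals one of its $i$ powers, all even since $i$ is odd. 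Therefore $h$ is a product of even permutations on disjoint supports, so $Z\subseteq A$. In the split case, $G=A\sqcup Ag$ for any odd $g$ gives the two $A$-orbits on $x^{\mathrm{Sym}(n)}$ as $x^{A}$ and $x^{Ag}=(x^{A})^{g}$, which are distinct because there are exactly two of them.

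The only step beyond sign bookkeeping is the converse in the third assertion, and its content is the elementary fact that the centralizer of a full $\ell$-cycle inside $\mathrm{Sym}(\ell)$ is the cyclic group it generates (so its elements are even when $\ell$ is odd), together with the observation that a permutation commuting with $x$ must fix each cycle of $x$ setwise once the cycle lengths are pairwise distinct. I expect this to be the main, though still routine, obstacle.
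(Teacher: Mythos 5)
Your proof is correct. The paper gives no argument for this proposition at all --- it is stated as a summary of textbook facts --- so there is nothing to compare against; your write-up is the standard one (sign of an $i$-cycle, conjugation acting on cycle entries, and the splitting criterion via whether the $\mathrm{Sym}(n)$-centralizer of $x$ contains an odd permutation), and all three parts, including the only non-routine step that the centralizer of an $\ell$-cycle on its support is the cyclic group it generates, check out.
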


In particular it makes sense to talk about $n_{i}(C)$. Define for brevity
\begin{align}\label{eq:cdelta}
\begin{aligned}
\mathcal{C}_{n} & :=\{\,\text{conjugacy classes $C$ of $\mathrm{Sym}(n)$}\,\}, \\
\mathcal{C}_{n}(\delta) & :=\{\,\text{conjugacy classes $C$ of $\mathrm{Sym}(n)$ with $|C|\geq|\mathrm{Sym}(n)|^{1-\delta}$}\,\}, \\
\mathcal{A}_{n} & :=\{\,\text{conjugacy classes $C$ of $\mathrm{Alt}(n)$}\,\}, \\
\mathcal{A}_{n}(\delta) & :=\{\,\text{conjugacy classes $C$ of $\mathrm{Alt}(n)$ with $|C|\geq|\mathrm{Alt}(n)|^{1-\delta}$}\,\}.
\end{aligned}
\end{align}
We know how to relate size and number of cycles.

\begin{proposition}\label{pr:gmcycles}
For any $\delta_{1},\delta_{2}>0$ there is $n_{0}$ such that, for any conjugacy class $C$ of $G=\mathrm{Alt}(n)$ with $n\geq n_{0}$ we have the following:
\begin{enumerate}[(a)]
\item\label{pr:gmcycles-1} if $|C|\geq|G|^{1-\delta_{1}}$, then the number of cycles of $C$ is at most $(\delta_{1}+\delta_{2})n$;
\item\label{pr:gmcycles-2} if the number of cycles of $C$ is at most $\delta_{1}n$, then $|C|\geq|G|^{1-\delta_{1}-\delta_{2}}$.
\end{enumerate}
\end{proposition}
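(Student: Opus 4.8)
The plan is to obtain two-sided estimates on $|C|$ purely in terms of the cycle structure $(n_i(C))_i$, and then read off (a) and (b). Recall that if $x\in\mathrm{Sym}(n)$ has cycle structure $(n_i)_i$, then its centralizer in $\mathrm{Sym}(n)$ has order $\prod_i i^{n_i} n_i!$, so $|x^{\mathrm{Sym}(n)}| = n!/\prod_i i^{n_i} n_i!$; and by Proposition~\ref{pr:cyclestr}, $|x^{\mathrm{Alt}(n)}|$ equals this or half of it, which changes $\log|C|$ by only $O(1)$ and is harmless. Write $k=k(C)=\sum_i n_i$ for the number of cycles. The first step is to show $\log|G| - \log|C| = \log\big(\prod_i i^{n_i} n_i!\big) + O(1)$, and then bound this ``defect'' above and below by quantities comparable to $k\log n$.

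For the upper bound on the defect (needed for part (b)): $\prod_i i^{n_i} \le n^{k}$ trivially, and $\prod_i n_i! \le (\max_i n_i!)^{?}$ is too crude, so instead bound $\prod_i n_i! \le (\sum_i n_i)! = k!$ by a standard multinomial-type inequality (the product of factorials of a composition of $k$ is at most $k!$). Hence the defect is at most $k\log n + \log k! + O(1) \le k\log n + k\log k + O(1) \le 2k\log n + O(1)$ for $n$ large. Meanwhile $\log|G| = \log n! = n\log n - n + O(\log n)$ by Stirling. So if $k \le \delta_1 n$, then
\[
\log|C| \ge \log|G| - 2\delta_1 n\log n + O(1),
\]
which is not yet $\ge (1-\delta_1-\delta_2)\log|G|$ because of the stray factor $\log n$. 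The fix is to be less wasteful: most cycles have small length. Concretely, if $C$ has $k$ cycles with total length $n$, the number of cycles of length $\ge L$ is at most $n/L$; choosing $L$ growing slowly (e.g.\ $L=L(\delta_2)$ a large constant, or $L=\log n$) one gets $\sum_i n_i \log i \le (k-\text{(few long cycles)})\log L + (\text{few})\cdot\log n$, and similarly controls $\sum\log n_i!$. This shows the defect is actually $O_{\delta_2}(k) + o(\log|G|)$, and since $k\le\delta_1 n$ while $\log|G| \ge \tfrac12 n\log n$, the defect is $\le (\delta_1+\delta_2)\log|G|$ for $n$ large; this gives (b). I expect this bookkeeping — showing the $\log n$ factor can be shaved except on a negligible set of long cycles — to be the main obstacle, though it is entirely elementary.

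For part (a), I argue contrapositively: suppose $C$ has more than $(\delta_1+\delta_2)n$ cycles; I must show $|C| < |G|^{1-\delta_1}$, i.e.\ the defect exceeds $\delta_1\log|G|$. Here the relevant lower bound on the defect is needed. Split the cycles by length: let $k_1 = n_1(C)$ be the number of fixed points and $k_{\ge 2}$ the number of cycles of length $\ge 2$. If there are many fixed points, $\prod_i i^{n_i} n_i! \ge n_1! = k_1!$, and $k_1$ large (say $k_1 \ge \varepsilon n$) forces $\log k_1! \ge \varepsilon n\log(\varepsilon n) \gg \delta_1 \log|G|$. If instead $k_{\ge 2}$ is large, then the cycles of length $\ge 2$ already consume $\ge 2k_{\ge 2}$ points, and one shows $\prod_{i\ge 2} i^{n_i} n_i!$ is large: crudely $\prod_{i\ge 2} i^{n_i} \ge 2^{k_{\ge 2}}$, which only gives a linear-in-$n$ lower bound on the defect, insufficient against $\delta_1\log|G| \asymp \delta_1 n\log n$. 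So one needs the sharper observation that if the total number of cycles is $\ge c n$ for fixed $c>0$, then a positive proportion of the mass of $n$ sits in short cycles, and a short-cycle-heavy structure has a centralizer of size $\ge (c'n)!$ for some $c'>0$ (many cycles of the same small length give a large $n_i!$ factor, or many fixed points give a large $n_1!$). Formally: among $\ge cn$ cycles, by pigeonhole some length $i_0$ with $1\le i_0 \le 2/c$ occurs $\ge c'n$ times for a suitable $c'>0$, whence the centralizer order is $\ge i_0^{n_{i_0}} n_{i_0}! \ge (c'n)! $, so the defect is $\ge \log(c'n)! \ge c'n\log n - O(n) > \delta_1\log|G|$ for $n$ large (since $\delta_1$ is fixed). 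This pigeonhole step delivering a single heavily-repeated short length is, I think, the crux of (a).

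In summary: reduce to $\mathrm{Sym}(n)$ at cost $O(1)$; write the defect as $\sum_i n_i\log i + \sum_i \log n_i! + O(1)$; for (b) bound it above, shaving the $\log n$ on all but $O(1)$ long cycles so the total is $o(n\log n)$; for (a) bound it below via pigeonhole (a repeated short cycle length or many fixed points gives a factorial of size linear in $n$, hence defect $\gg n\log n \gg \log|G|$). Both halves should need only Stirling's formula, $\prod n_i! \le (\sum n_i)!$, and elementary pigeonhole, with no character theory; $n_0$ depends on $\delta_1,\delta_2$ through how large $n$ must be for the $o(\cdot)$ and $O(\cdot)$ terms to be absorbed.
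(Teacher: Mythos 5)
The paper itself does not prove this proposition from scratch; it cites \cite[Prop.~2.3]{DMP24} and \cite[Lemma~2.3]{GM21}. Your plan to give a self-contained argument via the centralizer formula $|x^{\mathrm{Sym}(n)}|=n!/\prod_i i^{n_i}n_i!$ is sound in outline, and your part~(b) is correct: bounding $\sum_i\log n_i!\le\log k!\le k\log k\le\delta_1 n\log n$ and splitting $\sum_i n_i\log i$ at a constant threshold $L=L(\delta_2)$ (at most $n/L$ cycles of length $\ge L$, each contributing $\le\log n$) does give a defect of at most $(\delta_1+\delta_2/2)n\log n+O_{\delta}(n)$, which suffices.

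Part~(a), however, has a genuine quantitative gap. Your pigeonhole produces a single length $i_0\le 2/c$ occurring $n_{i_0}\ge c'n$ times with $c'=c^2/4$ where $c=\delta_1+\delta_2$, and you then conclude that the defect is at least $\log((c'n)!)\ge c'n\log n-O(n)>\delta_1\log|G|$ ``since $\delta_1$ is fixed.'' But $\delta_1\log|G|\sim\delta_1 n\log n$, so this last inequality requires $c'>\delta_1$, which fails badly: for $\delta_1=\delta_2=0.01$ you get $c'=10^{-4}\ll\delta_1$. The same error occurs in your fixed-point case ($\varepsilon n\log n\gg\delta_1 n\log n$ only if $\varepsilon\gg\delta_1$). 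The statement of (a) is essentially sharp in the constant (e.g.\ $k$ fixed points plus one long cycle has defect $\approx k\log k$), so you cannot afford to retain only a constant fraction of the cycles: you must show that \emph{essentially all} $k$ cycles contribute about $\log n$ each to the defect. The repair is to aggregate over all short lengths rather than the single most popular one: with $k_{\le L}:=\sum_{i\le L}n_i\ge k-n/L$, convexity (or the log-sum inequality) gives
\begin{equation*}
\sum_{i\le L}\log n_i!\ \ge\ \sum_{i\le L}\bigl(n_i\log n_i-n_i\bigr)\ \ge\ k_{\le L}\log\frac{k_{\le L}}{L}-k_{\le L}\ \ge\ k_{\le L}\log n-O_{\delta,L}(n),
\end{equation*}
and choosing $L$ large enough that $n/L\le\tfrac{1}{2}\delta_2 n$ yields a defect of at least $(\delta_1+\tfrac{1}{2}\delta_2)n\log n-O_{\delta}(n)>\delta_1\log|G|$ for $n$ large. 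With that replacement your argument goes through.
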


\begin{proof}
See \cite[Prop.~2.3]{DMP24}, which up to straightforward manipulations proves \eqref{pr:gmcycles-2} and refers to \cite[Lemma~2.3]{GM21} for \eqref{pr:gmcycles-1}.
\end{proof}

The result above has several useful consequences.

\begin{corollary}\label{co:classbig}
There is $\delta_{0}>0$ such that for all $0<\delta<\delta_{0}$, all $n$ large enough depending on $\delta$, and all $C\in\mathcal{A}_{n}(\delta)$ the following facts hold.
\begin{enumerate}[(a)]
\item\label{co:classbig-one} There is some $i\geq 1000$ with $n_{i}(C)\geq 1$.
\item\label{co:classbig-fewsmall} $\sum_{i\leq 100}in_{i}(C)<\frac{1}{100}n$.
\item\label{co:classbig-cut} Let $k=\sum_{i\leq n}n_{i}(C)$ be the number of cycles of $C$, and let $l_{1},\ldots,l_{k}$ be their lengths. Take any class $C'\in\mathcal{A}_{n'}$ whose cycles have lengths $l'_{1},\ldots,l'_{k}$ with $l'_{j}\leq l_{j}$ for all $j$; we allow $l'_{j}=0$, in which case there is no $j$-th cycle in $C'$. If $n-n'=\sum_{j\leq k}(l_{j}-l'_{j})\leq\frac{1}{10}n$, then $C'\in\mathcal{A}_{n'}(2\delta)$.
\end{enumerate}
\end{corollary}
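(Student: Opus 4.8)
The plan is to deduce all three items from Proposition~\ref{pr:gmcycles}. Fix $\delta_{0}$ small enough --- one may take $\delta_{0}=\frac{1}{20000}$ --- and, for each $\delta<\delta_{0}$, take $n$ large enough (in terms of $\delta$) that all invocations of Proposition~\ref{pr:gmcycles} below are legitimate. Throughout, write $k=\sum_{i}n_{i}(C)$ for the number of cycles of $C$ and $l_{1},\ldots,l_{k}$ for their lengths, so that $n=\sum_{j}l_{j}$; note that if $C'$ is as in \eqref{co:classbig-cut}, then the number of cycles of $C'$ (namely those $j$ with $l'_{j}\geq 1$) is at most $k$.

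For \eqref{co:classbig-one} and \eqref{co:classbig-fewsmall} I would apply Proposition~\ref{pr:gmcycles}\eqref{pr:gmcycles-1} with $\delta_{1}=\delta_{2}=\delta$ to get $k\leq 2\delta n$. If \eqref{co:classbig-one} failed, every cycle of $C$ would have length $\leq 999$, forcing $n=\sum_{j}l_{j}\leq 999k\leq 1998\delta n<n$ when $\delta<\frac{1}{1998}$ --- a contradiction. If \eqref{co:classbig-fewsmall} failed, then, since every cycle contributing to $\sum_{i\leq 100}in_{i}(C)$ has length $\leq 100$, the number of such cycles would be at least $\frac{1}{100}\sum_{i\leq 100}in_{i}(C)\geq\frac{n}{10000}$, so $k\geq\frac{n}{10000}>2\delta n$ when $\delta<\frac{1}{20000}$ --- again a contradiction.

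For \eqref{co:classbig-cut} the idea is that cutting $C$ down to $C'$ deletes at most $\frac{1}{10}n$ points and does not increase the number of cycles, so $C'$ stays cycle-poor relative to $n'$. Apply Proposition~\ref{pr:gmcycles}\eqref{pr:gmcycles-1} to $C$ with $\delta_{1}=\delta$ and $\delta_{2}=\frac{1}{2}\delta$: then $C$, hence also $C'$, has at most $\frac{3}{2}\delta n$ cycles. Since $n'=n-(n-n')\geq\frac{9}{10}n$, i.e.\ $n\leq\frac{10}{9}n'$, the number of cycles of $C'$ is at most $\frac{3}{2}\delta n\leq\frac{3}{2}\cdot\frac{10}{9}\delta n'=\frac{5}{3}\delta n'$. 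Applying Proposition~\ref{pr:gmcycles}\eqref{pr:gmcycles-2} to $\mathrm{Alt}(n')$ with $\delta_{1}=\frac{5}{3}\delta$ and $\delta_{2}=\frac{1}{3}\delta$ then yields $|C'|\geq|\mathrm{Alt}(n')|^{1-\frac{5}{3}\delta-\frac{1}{3}\delta}=|\mathrm{Alt}(n')|^{1-2\delta}$, i.e.\ $C'\in\mathcal{A}_{n'}(2\delta)$ by \eqref{eq:cdelta}. Choosing $n$ large enough in terms of $\delta$ guarantees that both $n$ and $n'\geq\frac{9}{10}n$ exceed the degree thresholds required by Proposition~\ref{pr:gmcycles} for these parameters.

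I do not expect a serious obstacle here; the only delicate point is the constant bookkeeping in \eqref{co:classbig-cut}, where the factor $\frac{10}{9}$ lost in passing from $n$ to $n'$ must be absorbed while still landing exactly on the exponent $1-2\delta$. This is precisely why one feeds Proposition~\ref{pr:gmcycles}\eqref{pr:gmcycles-1} the shrinking slack parameter $\frac{1}{2}\delta$ rather than a fixed constant, and splits the final budget as $2\delta=\frac{5}{3}\delta+\frac{1}{3}\delta$. No parity or existence question for $C'$ arises, since it is handed to us as an element of $\mathcal{A}_{n'}$.
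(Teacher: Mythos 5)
Your proposal is correct and follows essentially the same route as the paper: all three parts are deduced from Proposition~\ref{pr:gmcycles}, with the identical parameter choices $(\delta_{1},\delta_{2})=(\delta,\delta)$ for \eqref{co:classbig-one}--\eqref{co:classbig-fewsmall} and $(\delta,\tfrac{1}{2}\delta)$ followed by $(\tfrac{5}{3}\delta,\tfrac{1}{3}\delta)$ for \eqref{co:classbig-cut}, and the same counting contradictions. Nothing to add.
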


\begin{proof}
In all cases, the ``$n$ large enough'' is taken so that $n$ in \eqref{co:classbig-one} and \eqref{co:classbig-fewsmall} or $\frac{9}{10}n$ in \eqref{co:classbig-cut} is larger than $n_{0}$ in Proposition~\ref{pr:gmcycles}, for the choices of $\delta_{1},\delta_{2}$ that we are going to make below depending on $\delta$.

If there are no cycles of length $\geq 1000$ then there are $>\frac{1}{1000}n$ cycles in $C$, so by Proposition~\ref{pr:gmcycles}\eqref{pr:gmcycles-1} with $(\delta_{1},\delta_{2})=(\delta,\delta)$ and $\delta<\frac{1}{2000}$ we obtain \eqref{co:classbig-one}. If the cycles of length $\leq 100$ cover at least $\frac{1}{100}n$ then there are $\geq\frac{1}{10000}n$ cycles in $C$, and again by Proposition~\ref{pr:gmcycles}\eqref{pr:gmcycles-1} with $(\delta_{1},\delta_{2})=(\delta,\delta)$ and $\delta<\frac{1}{20000}$ we obtain \eqref{co:classbig-fewsmall}. Finally, Proposition~\ref{pr:gmcycles}\eqref{pr:gmcycles-1} with $(\delta_{1},\delta_{2})=(\delta,\frac{1}{2}\delta)$ implies that $k\leq\frac{3}{2}\delta n$, and if $k'$ is the number of cycles of $C'$ then $k'\leq k$ (strict inequality is possible if there are $l'_{j}=0$). The hypothesis implies that $k'\leq\frac{5}{3}\delta n'$, and Proposition~\ref{pr:gmcycles}\eqref{pr:gmcycles-2} with $(\delta_{1},\delta_{2})=(\frac{5}{3}\delta,\frac{1}{3}\delta)$ gives \eqref{co:classbig-cut}.
\end{proof}

\section{Proof that Thm.~\ref{th:main2} implies Thms.~\ref{th:main}--\ref{th:mainmore}}\label{se:maintomain}

To prove that $C_{1}C_{2}C_{3}=\mathrm{Alt}(n)$, we show that $C_{1}C_{2}C_{3}$ contains every conjugacy class of $\mathrm{Alt}(n)$. Theorem~\ref{th:main2} will take care of the small classes, while classical methods cover the classes that are not small: in this section we do the latter.

A result from the 1980s lets us reach any class of large enough $m$-cycles.

\begin{proposition}\label{pr:dvir}
Let $C_{1},C_{2}\in\mathcal{C}_{n}$, and for $j\in\{1,2\}$ let $k_{j}=\sum_{i}n_{i}(C_{j})$. Call $O_{m}\in\mathcal{C}_{n}$ the class whose elements are made of one $m$-cycle and $n-m$ $1$-cycles. If $C_{1},C_{2},O_{m}\subseteq\mathrm{Alt}(n)$ and $m\geq k_{1}+k_{2}$, then $C_{1}C_{2}\supseteq O_{m}$.
\end{proposition}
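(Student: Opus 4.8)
By Remark~\ref{re:littleplay} it is enough to produce $\alpha_{1}\in C_{1}$ and $\alpha_{2}\in C_{2}$ with $\alpha_{1}\alpha_{2}=\sigma$ for one fixed $m$-cycle $\sigma$ (together with $n-m$ fixed points). Write $k_{i}$ for the number of cycles of $C_{i}$. The hypotheses $C_{1},C_{2},O_{m}\subseteq\mathrm{Alt}(n)$ give $(-1)^{m-1}=\mathrm{sgn}(\sigma)=\mathrm{sgn}(\alpha_{1})\mathrm{sgn}(\alpha_{2})=1$, so $m$ is odd, while $n\equiv k_{i}\pmod 2$ for $i=1,2$; hence $k_{1}+k_{2}$ is even and the assumption $m\geq k_{1}+k_{2}$ actually gives $m\geq k_{1}+k_{2}+1$, i.e.\ there is genuine slack. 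Everything below is a hands-on construction.

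Fix a baseline factorization. With $\sigma=(s_{1}\,\cdots\,s_{m})$ and $\mathrm{Fix}(\sigma)=\{t_{1},\ldots,t_{n-m}\}$, let $\delta=(s_{1}\,t_{1}\,\cdots\,t_{n-m})$ and set $\alpha_{1}^{(0)}=\sigma\delta$, $\alpha_{2}^{(0)}=\delta^{-1}$: then $\alpha_{1}^{(0)}\alpha_{2}^{(0)}=\sigma$, the element $\alpha_{1}^{(0)}$ is an $n$-cycle (an $m$-cycle and an $(n-m+1)$-cycle glued at $s_{1}$), and $\alpha_{2}^{(0)}$ is an $(n-m+1)$-cycle with $m-1$ fixed points, so the pair realizes the cycle-count profile $(1,m)$, of total $m+1$ (when $m=n$ this degenerates to $\alpha_{1}^{(0)}=\sigma$, $\alpha_{2}^{(0)}=e$). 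The elementary move is: for a factorization $\alpha_{1}\alpha_{2}=\sigma$ and a transposition $\tau$, the pair $(\alpha_{1}\tau,\tau\alpha_{2})$ is again a factorization of $\sigma$, and replacing $\alpha_{1}$ by $\alpha_{1}\tau$ (likewise $\alpha_{2}$ by $\tau\alpha_{2}$) either merges two of its cycles or splits one of them, according as the two moved points lie in different cycles or in the same cycle of $\alpha_{1}$ (resp.\ of $\alpha_{2}$); thus each move alters $k_{1}+k_{2}$ by $0$ or $\pm 2$, preserving its parity. The plan is to walk, by such moves, from the baseline profile to the target pair $(\lambda^{(1)},\lambda^{(2)})$ of cycle types of $C_{1},C_{2}$ (with $\lambda^{(i)}$ having $k_{i}$ cycles), which is feasible exactly because the target total $k_{1}+k_{2}$ is even and at most $m+1$: one first brings the total down to $k_{1}+k_{2}$ by a few merge/merge moves, then reshapes each factor into the correct cycle type by moves that split a cycle in one factor while merging two cycles in the other, keeping the total fixed.

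The delicate point is that every move acts on $\alpha_{1}$ and $\alpha_{2}$ at once, so one must, at each step, find two points with the required position profile simultaneously in $\alpha_{1}$ and in $\alpha_{2}$, and rule out deadlocks in which reshaping one factor forces an unwanted change in the other. This is precisely where $m\geq k_{1}+k_{2}$ enters: the single long cycle of $\sigma$ together with the surplus granted by the inequality ensure that at every stage there remain enough points in the needed configuration, so the induction closes. Carrying out this bookkeeping --- equivalently, pinning down exactly which pairs of cycle types occur among the factorizations of an $m$-cycle --- is the substance of the argument; it is the content of \cite[Thm.~5.1]{Dvi85}, and the proposition follows once one checks that the parity constraints and the bound $m\geq k_{1}+k_{2}$ match its hypotheses. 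In the application of \S\ref{se:maintomain} one may simply invoke that theorem; the sketch above indicates how one would reconstruct it by hand, in keeping with the constructive spirit of the rest of the paper.
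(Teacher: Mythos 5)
Your proposal is correct and ends up in the same place as the paper: Proposition~\ref{pr:dvir} is proved there simply by citing \cite[Thm.~5.1(iii)]{Dvi85} (with the remark that the argument in \cite[\S 3 and \S 5]{Dvi85} is elementary and constructive), which is exactly what your sketch defers to after the (accurate) parity bookkeeping and the baseline factorization. The extra outline of the transposition-move strategy is a reasonable gloss on Dvir's method but is not needed, since the citation carries the full weight in both versions.
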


\begin{proof}
See \cite[Thm.~5.1(iii)]{Dvi85}. The proof is elementary and constructive: follow \cite[\S 3 and \S 5]{Dvi85}.
\end{proof}

For $\mathrm{Alt}(n)$ we need a small variation, since by Proposition~\ref{pr:cyclestr} a class $C\in\mathcal{C}_{n}$ with $C\subseteq\mathrm{Alt}(n)$ might be the union of two distinct classes in $\mathrm{Alt}(n)$.

\begin{proposition}\label{pr:dvir2}
Let $C_{1},C_{2}\in\mathcal{A}_{n}$, and let $m$ be the unique odd number in $\{n-3,n-2\}$. Then, for any $\delta>0$ small enough and $n$ large enough depending on $\delta$, if $|C_{1}||C_{2}|\geq|\mathrm{Alt}(n)|^{1+\delta}$ then $C_{1}C_{2}\supseteq O_{m}$.
\end{proposition}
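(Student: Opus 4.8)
The plan is to derive this from Dvir's theorem (Proposition~\ref{pr:dvir}), applied not to $C_1,C_2$ themselves but to the $\mathrm{Sym}(n)$-classes containing them. Write $\widetilde{C}_i:=C_i^{\mathrm{Sym}(n)}$; since $C_i\subseteq\mathrm{Alt}(n)$ and $m$ is odd, all three of $\widetilde{C}_1,\widetilde{C}_2,O_m$ lie in $\mathrm{Alt}(n)$. Letting $k_i$ be the number of cycles of $C_i$ and writing $|C_i|=|\mathrm{Alt}(n)|^{1-\varepsilon_i}$, the hypothesis $|C_1||C_2|\geq|\mathrm{Alt}(n)|^{1+\delta}$ gives $\varepsilon_1+\varepsilon_2\leq 1-\delta$; feeding each $C_i$ into Proposition~\ref{pr:gmcycles}\eqref{pr:gmcycles-1} (with second parameter $\tfrac14\delta$) yields $k_i\leq(\varepsilon_i+\tfrac14\delta)n$, hence $k_1+k_2\leq(1-\tfrac12\delta)n<n-3\leq m$ for $n$ large. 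So Proposition~\ref{pr:dvir} applies and gives $O_m\subseteq\widetilde{C}_1\widetilde{C}_2$.

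The remaining task is to replace $\widetilde{C}_i$ by the possibly-smaller $C_i$. Two elementary facts turn this into a finite case analysis. First, $C_1C_2$ is invariant under $\mathrm{Alt}(n)$-conjugation. Second, $O_m$ is a \emph{single} $\mathrm{Alt}(n)$-class: since $n-m\in\{2,3\}$, its elements have at least two $1$-cycles, so Proposition~\ref{pr:cyclestr} puts us in the non-split case. Consequently it is enough to produce one pair $(x_1,x_2)\in C_1\times C_2$ with $x_1x_2\in O_m$. Starting from a factorization $\sigma=x_1x_2\in O_m$ with $x_i\in\widetilde{C}_i$, let $t$ be the transposition of two fixed points of $\sigma$; then $t$ is odd and commutes with $\sigma$, so $\sigma=x_1^tx_2^t$ is again a factorization, and conjugation by $t$ sends each $\widetilde{C}_i$ to itself while interchanging its two $\mathrm{Alt}(n)$-constituents whenever $\widetilde{C}_i$ splits. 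If at most one of $\widetilde{C}_1,\widetilde{C}_2$ splits we are done: we may correct the $\mathrm{Alt}(n)$-class of the offending factor (if any) by passing from $(x_1,x_2)$ to $(x_1^t,x_2^t)$ without disturbing the other.

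The real obstacle — which I expect to be where the proof actually lives — is the case in which \emph{both} $\widetilde{C}_i$ split: conjugation by $t$ then flips both $\mathrm{Alt}(n)$-classes simultaneously, and no conjugation can flip exactly one, since a split class has centralizer contained in $\mathrm{Alt}(n)$. A partial fix is to pass to inverses: from $\sigma=x_1x_2$ one gets $\sigma^{-1}=x_2^{-1}x_1^{-1}\in O_m$, and for a split class $C$ with $k$ cycles inversion preserves or swaps its two $\mathrm{Alt}(n)$-constituents according to whether $\tfrac12(n-k)$ is even or odd; as both split classes share the same $n$, this moves exactly one of $x_1,x_2$ into the other $\mathrm{Alt}(n)$-class precisely when $k_1\not\equiv k_2\pmod 4$, and then, combined with the $t$-conjugation of the previous paragraph, it finishes the argument. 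For the residual case (both classes split and $k_1\equiv k_2\pmod 4$) I would argue constructively: a split class consists of permutations with distinct odd cycle lengths, so here $k_1,k_2\leq\sqrt n$ and each factor has at most one fixed point, leaving a lot of slack. Running the explicit interleaving procedure from the proof of \cite[Thm.~5.1]{Dvi85} one builds a factorization $x_1x_2=\sigma\in O_m$, and one then modifies the construction of $x_2$ alone — reversing one of its cycles, or re-routing a gluing step — so as to land in the other $\mathrm{Alt}(n)$-class while keeping $x_1$ unchanged and the product still an $m$-cycle. Making that modification compatible with the cycle bookkeeping of Dvir's construction is the crux of the proposition.
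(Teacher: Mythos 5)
Your first two steps match the paper's proof: Dvir's theorem applied to the $\mathrm{Sym}(n)$-classes gives $O_m\subseteq\widetilde{C}_1\widetilde{C}_2$, and since $O_m$ is a single non-split $\mathrm{Alt}(n)$-class, conjugating a factorization by an odd permutation centralizing $\sigma$ (the paper phrases this as ``one of the four inclusions \eqref{eq:4incl} holds, and they pair up'') settles every case except the one where both $\widetilde{C}_i$ split. Your inversion trick for the both-split case is a genuine addition not in the paper, and it is correct once you make explicit the reordering step you are implicitly using: $x_1^{-1}x_2^{-1}=(x_2x_1)^{-1}\in O_m$ (note $x_2x_1=\sigma^{x_1}$ with $x_1$ even), which is a factorization in the right order with the $i$-th factor lying in $C_i^{g^{\iota_i}}$ or its flip, where $\iota_i\equiv\frac{1}{2}(n-k_i)\pmod 2$; combined with the simultaneous flip this resolves exactly the sub-case $k_1\not\equiv k_2\pmod 4$, as you say.

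The residual case (both classes split and $k_1\equiv k_2\pmod 4$) is a genuine gap: ``modify the construction of $x_2$ alone \ldots while keeping $x_1$ unchanged and the product still an $m$-cycle'' is a plan, not a proof, and you flag it yourself as the crux. There is no guarantee that re-routing one gluing step in Dvir's interleaving flips the $\mathrm{Alt}(n)$-constituent of $x_2$ without destroying the cycle structure of the product, and nothing you have written controls this. The paper closes this case by other means: either by citing the character-theoretic \cite[Lemma~5.3(i)]{GM21}, or --- constructively, and using an observation you already have in hand --- by noting that a split class has distinct odd cycle lengths, hence at most $\sqrt{n}$ cycles, so Proposition~\ref{pr:gmcycles}\eqref{pr:gmcycles-2} gives $|C_1|,|C_2|\geq|\mathrm{Alt}(n)|^{1-\delta}$ (and likewise $|O_m|$), whereupon Theorem~\ref{th:main2} (whose proof is independent of this proposition, so there is no circularity) yields $C_1C_2\supseteq O_m$ directly. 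You derive $k_i\leq\sqrt{n}$ but then spend it on ``slack'' in Dvir's construction instead of on the size bound that would finish the argument; redirecting it as above closes the gap.
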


\begin{proof}
By Proposition~\ref{pr:gmcycles}\eqref{pr:gmcycles-1}, if $k_{j}$ is the number of cycles of $C_{j}$ then the condition $m\geq k_{1}+k_{2}$ from Proposition~\ref{pr:dvir} holds. Fix $g\in\mathrm{Sym}(n)\setminus\mathrm{Alt}(n)$. Then by Proposition~\ref{pr:dvir} and Remark~\ref{re:littleplay} we must have at least one of the four inclusions
\begin{align}\label{eq:4incl}
C_{1}C_{2} & \supseteq O_{m}, & C_{1}(C_{2})^{g} & \supseteq O_{m}, & (C_{1})^{g}C_{2} & \supseteq O_{m}, & (C_{1})^{g}(C_{2})^{g} & \supseteq O_{m}.
\end{align}
Not only $O_{m}\in\mathcal{C}_{n}$, but by Proposition~\ref{pr:cyclestr} since $m$ is odd $O_{m}\subseteq\mathrm{Alt}(n)$, and since its elements have at least two $1$-cycles $O_{m}\in\mathcal{A}_{n}$. In particular $O_{m}=(O_{m})^{g}$, so the first inclusion in \eqref{eq:4incl} is equivalent to the fourth and the second is equivalent to the third. Moreover, if either $C_{1}=(C_{1})^{g}$ or $C_{2}=(C_{2})^{g}$ then the first is also equivalent to either the second or the third. Thus, unless $C_{j}\neq(C_{j})^{g}$ for both $j\in\{1,2\}$, any of the inclusions in \eqref{eq:4incl} implies the other three.

We are left with the case in which $C_{1}\neq(C_{1})^{g}$ and $C_{2}\neq(C_{2})^{g}$. A character-theoretic proof of this case is essentially contained in \cite[Lemma~5.3(i)]{GM21}, with the difference that $m\in\{n-1,n\}$ therein, but the technique works out similarly. See also \cite[Thm.~3]{LT23} for the case $C_{1}=C_{2}$. If, in line with the rest of the paper, one desires a constructive proof, just use Propositions~\ref{pr:cyclestr}--\ref{pr:gmcycles}\eqref{pr:gmcycles-2} to show that $|C_{1}|,|C_{2}|,|O_{m}|\geq|\mathrm{Alt}(n)|^{1-\delta}$ and then apply Theorem~\ref{th:main2}.
\end{proof}

We combine Proposition~\ref{pr:dvir2} and Theorem~\ref{th:main2} to obtain Theorems~\ref{th:main}--\ref{th:mainmore}.

\begin{proof}[Proof that Thm.~\ref{th:main2} $\Rightarrow$ Thm.~\ref{th:main}]
Let $\delta>0$ be small enough to make both Proposition~\ref{pr:dvir2} and Theorem~\ref{th:main2} hold, and take $n$ large. We prove Theorem~\ref{th:main} with $\delta/3$, by showing that $C_{1}C_{2}C_{3}\supseteq C$ for any $C\in\mathcal{A}_{n}$. 

First, let $|C|\geq|\mathrm{Alt}(n)|^{2\delta/3}$. By Proposition~\ref{pr:dvir2}, if $m\in\{n-3,n-2\}$ is odd then both $C_{1}C_{2}\supseteq O_{m}$ and $CC_{3}^{-1}\supseteq O_{m}$. Via Remark~\ref{re:littleplay}, we conclude that $C_{1}C_{2}C_{3}\supseteq C$.

Now let $|C|<|\mathrm{Alt}(n)|^{2\delta/3}$. For any $\gamma\in C$ and any $\gamma_{3}\in C_{3}$, the class $C'\ni c_{3}c^{-1}$ must have $|C'|\geq|\mathrm{Alt}(n)|^{1-\delta}$, because otherwise using Remark~\ref{re:littleplay} we get a contradiction by $|C'||C|\geq|C'C|\geq|C_{3}|\geq|\mathrm{Alt}(n)|^{1-\delta/3}>|C'||C|$. But then Theorem~\ref{th:main2} implies that $C_{1}C_{2}\supseteq(C')^{-1}$, and then Remark~\ref{re:littleplay} yields again $C_{1}C_{2}C_{3}\supseteq(C')^{-1}C_{3}\supseteq C$.
\end{proof}

\begin{proof}[Proof that Thm.~\ref{th:main} $\Rightarrow$ Thm.~\ref{th:mainmore}]
By \cite[Thm.~1.3]{MP21}, the result holds for every non-abelian finite simple group $G\neq\mathrm{Alt}(n)$ (see also \cite[Thm.~7.4]{LST24} for $G$ classical).

Let $G=\mathrm{Alt}(n)$, and let $\delta,n_{0}$ be as in Theorem~\ref{th:main}. The number of conjugacy classes in $\mathrm{Alt}(n)$ is at most $2p(n)$ (the partition function), so it is bounded from above by $e^{c\sqrt{n}}$ for some absolute $c$. Therefore, any normal subset $S\subseteq G$ contains a class $C$ of size $|C|\geq e^{-c\sqrt{n}}|S|$. There is also some $n_{1}$ such that for every $n\geq n_{1}$ we have $e^{c\sqrt{n}}\leq|G|^{\frac{1}{2}\delta}$. Set $n_{2}:=\max\{n_{0},n_{1}\}$.

Assume first that $n\geq n_{2}$. For any three $S_{i}$ with $|S_{i}|\geq|G|^{1-\frac{1}{2}\delta}$ there are $C_{i}\subseteq S_{i}$ with $|C_{i}|\geq|G|^{1-\delta}$. Hence, we obtain $G=C_{1}C_{2}C_{3}\subseteq S_{1}S_{2}S_{3}\subseteq G$ by Theorem~\ref{th:main}, and thus $S_{1}S_{2}S_{3}=G$. Assume then that $n<n_{2}$. Then the conclusion of Theorem~\ref{th:mainmore} holds vacuously for an appropriate exponent, say $|G|^{1-(n_{2}\log n_{2})^{-1}}>|G|-1$, because the condition forces $S_{i}=G$.
\end{proof}

\section{Notation}\label{se:notation}

The rest of the paper is devoted to proving Theorem~\ref{th:main2}. We use footnotes to explain the motivation of choices that, at the moment of their occurrence, may seem arbitrary or non-optimal. We dedicate this section to introduce the necessary notation.

Thanks to Remark~\ref{re:littleplay}, the goal is show that, given $C_{1},C_{2},C_{3}$ with $|C_{i}|\geq|G|^{1-\delta}$ for some small enough $\delta$, there exist elements $\alpha_{i}\in C_{i}$ satisfying $\alpha_{1}\alpha_{2}=\alpha_{3}$. Inside $\mathrm{Sym}(n)$, by Proposition~\ref{pr:cyclestr} taking a conjugacy class $C_{i}$ is the same as fixing a cycle structure, and the same is almost true for $\mathrm{Alt}(n)$ as well. Our method involves creating a more rigid structure that fixes also an ordering of the cycles, together with additional information, and the same structure will be used for the elements $\alpha_{i}\in C_{i}$ satisfying $\alpha_{1}\alpha_{2}=\alpha_{3}$.

\subsection{Classes}\label{se:notation-classes}

Here we introduce the language referring to classes and cycle structure, rather than the class elements themselves.

Fix $n$, let $P_{n}:=\{1,2,\ldots,n\}=\mathbb{Z}\cap[1,n]$ be the set of \textit{positions}, and let $\tilde{P}_{n}:=\left\{\frac{1}{2},\frac{3}{2},\ldots,n+\frac{1}{2}\right\}=\left(\frac{1}{2}\mathbb{Z}\cap(0,n+1)\right)\setminus P_{n}$ be the set of \textit{half-positions}. Let $\mathcal{B}=\{\square,\blacksquare\}$ be an alphabet of two symbols, which will denote whether or not we end a cycle and start a new one ($\blacksquare$ is a \textit{cycle break}). A \textit{class string} is a function $\phi:\tilde{P}_{n}\rightarrow\mathcal{B}$ with $\phi(\frac{1}{2})=\phi(n+\frac{1}{2})=\blacksquare$. As we will often do to give a clearer idea of what we are doing, we can visually represent a class string for example as
\begin{align}\label{eq:stringex}
\begin{aligned}
\tilde{P}_{7} & & : & & {\scriptstyle \frac{1}{2} } & & {\scriptstyle \frac{3}{2} } & & {\scriptstyle \frac{5}{2} } & & {\scriptstyle \frac{7}{2} } & & {\scriptstyle \frac{9}{2} } & & {\scriptstyle \frac{11}{2} } & & {\scriptstyle \frac{13}{2} } & & {\scriptstyle \frac{15}{2} } & \\
\phi & & : & & \blacksquare & & \square & & \square & & \blacksquare & & \square & & \square & & \square & & \blacksquare &
\end{aligned}
\end{align}
Let $\Phi_{n}$ be the set of class strings $\phi$. There is a natural function $\mu:\Phi_{n}\rightarrow\mathcal{C}_{n}$: every $\phi$ yields a unique class $C$, defined by taking a cycle of size $r$ whenever $\phi(a)=\phi(a+r)=\blacksquare$ and $\phi(b)=\square$ for all $a<b<a+r$. The string \eqref{eq:stringex} corresponds to the class of $\mathrm{Sym}(7)$ whose elements are the products of two (disjoint) cycles of length $3$ and $4$.

Throughout the paper we define several types of \textit{labels}, to be collected in five sets $\mathcal{N},\mathcal{L},\mathcal{T},\mathcal{S},\mathcal{P}$ and to be assigned to half-positions (an extra symbol ``$\emptyset$'' is used for an empty label). A \textit{label function} is a function $\lambda:\tilde{P}_{n}\rightarrow\mathcal{Y}$, where $\mathcal{Y}$ is the union of some or all the sets of labels above. Let $\Lambda_{n,\mathcal{Y}}$ be the set of label functions $\lambda:\tilde{P}_{n}\rightarrow\mathcal{Y}$. A \textit{string triple} is an element of
\begin{equation*}
\mathfrak{S}=\bigcup_{m=1}^{\infty}\Phi_{m}^{\times 3}\times\Lambda_{m,\{\emptyset\}\cup\mathcal{N}\cup\mathcal{L}\cup\mathcal{T}\cup\mathcal{S}\cup\mathcal{P}},
\end{equation*}
where $X^{\times 3}$ is shorthand for $X\times X\times X$ (so the string ``triple'' is a quadruple, made of three class strings and one label function).

We start in $\mathcal{A}_{n}(\delta)^{\times 3}$ but work mostly in $\mathfrak{S}$, building functions $\theta_{k}:\mathcal{X}_{k-1}\rightarrow\mathcal{X}_{k}$ where $\mathcal{X}_{k}$ is often a subset of $\mathfrak{S}$: with each successive $k$, inside $(\phi_{1},\phi_{2},\phi_{3},\lambda)\in\mathcal{X}_{k}$ the function $\lambda$ becomes more and more complicated while the classes $\mu(\phi_{1}),$ $\mu(\phi_{2}),\mu(\phi_{3})$ yield an easier and easier problem, in the spirit of \S\ref{se:example}. Each time we pass from $\mathcal{X}_{k-1}$ to $\mathcal{X}_{k}$, we call it a \textit{reduction} of the problem; there shall be seven of them in \S\S\ref{se:re1}--\ref{se:re7}, marked by Roman numerals.

\subsection{Elements}\label{se:notation-elements}

Here we deal with language referring to class elements.

Let $V_{n}:=\{1,2,\ldots,n\}=\mathbb{Z}\cap[1,n]$ be the set of \textit{values}; although it is the same as $P_{n}$, we use different names to highlight their different role. If $\Pi_{n}$ is the set of bijective functions $\eta:P_{n}\rightarrow V_{n}$ (thus, $\Pi_{n}\simeq\mathrm{Sym}(n)$), an \textit{element string} is a pair $(\eta,\phi)\in\Pi_{n}\times\Phi_{n}$. Again, there is a natural function $\mu:\Pi_{n}\times\Phi_{n}\rightarrow\mathrm{Sym}(n)$, by which $(\eta,\phi)$ is sent to the product of cycles of the form $(x_{1}\ \ldots\ x_{r})$ whenever there is $a\in P_{n}$ such that
\begin{align*}
\phi\left(a+i+\frac{1}{2}\right) & =\begin{cases} \blacksquare & (i\in\{-1,r\}), \\ \square & (0\leq i<r), \end{cases} & \eta(a+i) & =x_{i} \ \ (1\leq i\leq r).
\end{align*}
This $\mu$ is compatible with the previous $\mu$, in the sense that the element $\mu(\eta,\phi)$ is contained in the class $\mu(\phi)$. For instance, the pair made of the string \eqref{eq:stringex} and the function $\eta$ defined by the sequence $(5,2,3,7,6,1,4)$ is sent to the element $\alpha=(5\,2\,3)(7\,6\,1\,4)$, which we will visually represent as
\begin{align}\label{eq:elemex}
\begin{aligned}
P_{7}\cup\tilde{P}_{7} & & : & & {\scriptstyle \frac{1}{2} } & & {\scriptstyle 1 } & & {\scriptstyle 2 } & & {\scriptstyle 3 } & & {\scriptstyle \frac{7}{2} } & & {\scriptstyle 4 } & & {\scriptstyle 5 } & & {\scriptstyle 6 } & & {\scriptstyle 7 } & & {\scriptstyle \frac{15}{2} } & \\
(\eta,\phi) & & : & & \blacksquare & & 5 & & 2 & & 3 & & \blacksquare & & 7 & & 6 & & 1 & & 4 & & \blacksquare &
\end{aligned}
\end{align}
The values $\square$ are ignored for simplicity, as our focus will be chiefly on how the $\blacksquare$ are distributed and aligned among the three strings.

\subsection{Solutions}\label{se:notation-solutions}

Given a class triple $(\phi_{1},\phi_{2},\phi_{3},\lambda)\in\mathfrak{S}$, a septuple $(\eta_{1},\eta_{2},\eta_{3},\phi_{1},\phi_{2},\phi_{3},\lambda)$ in which the class strings $(\eta_{i},\phi_{i})$ satisfy $\mu(\eta_{1},\phi_{1})\mu(\eta_{2},\phi_{2})=\mu(\eta_{3},\phi_{3})$ is called a \textit{solution}. We often refer to it as a solution for the reduction in which the string triple lies, and we also refer to the triple of elements $\alpha_{i}$ as a solution, especially in Reductions~\ref{re:1}--\ref{re:2} whose setting is not $\mathfrak{S}$.

In the reductions, having $\mu(\eta_{1},\phi_{1})\mu(\eta_{2},\phi_{2})=\mu(\eta_{3},\phi_{3})$ is not the only requirement: the solution must have also some particularly nice structure. Let us define the relevant concepts here.

A \textit{cycle} of a string $\phi\in\Phi_{n}$, or of a string element $(\eta,\phi)\in\Pi_{n}\times\Phi_{n}$, is a restriction of $\phi$ or $(\eta,\phi)$ to some interval $[a,b]$ with $a,b\in\tilde{P}_{n}$ such that $\phi(a)=\phi(b)=\blacksquare$ and $\phi(c)=\square$ for all $a<c<b$. Cycles from different strings or string elements, each corresponding to an interval $[a_{i},b_{i}]$, are said to \textit{share (at least) $k$ common positions} if $\min_{i}b_{i}-\max_{i}a_{i}\geq k$, and they \textit{share a break} (resp., \textit{two breaks}) if for some $a,b$ either $a_{i}=a$ for all $i$ or $b_{i}=b$ for all $i$ (resp., both $a_{i}=a$ and $b_{i}=b$ for all $i$).

As a trivial observation, note that for a fixed $\phi\in\Phi_{n}$ there can be multiple $\eta\in\Pi_{n}$ giving the same $\alpha=\mu(\eta,\phi)\in\mathrm{Sym}(n)$. For example, we can cyclically permute the values of $\eta$ between two consecutive $\blacksquare$, which gives rise to different representations of the same $\alpha$ (just as $(1\, 2\, 3)=(2\, 3\, 1)=(3\, 1\, 2)$ in $\mathrm{Sym}(3)$). We say that a property holds for a triple of element strings $(\eta_{i},\phi_{i})$ \textit{up to cycling around} if the property holds for $(\eta'_{i},\phi_{i})$ with $\eta'_{i}$ obtained by cyclically permuting $\eta_{i}$ in the way we described; we do not necessarily use the same permutation for all $i$.

Having established this language, we can characterize nice solutions.

\begin{definition}\label{de:aligned}
Let $(\eta_{1},\eta_{2},\eta_{3},\phi_{1},\phi_{2},\phi_{3},\lambda)$ be a solution, for each $i$ let $\gamma_{i}$ be a cycle of $(\eta_{i},\phi_{i})$ restricted to $[a_{i},b_{i}]$, and let $k\geq 1$ such that $\gamma_{1},\gamma_{2},\gamma_{3}$ share $\geq k$ common positions, say $x+1,\ldots,x+k$. Fix the following properties, which may or may not be satisfied for $\gamma_{1},\gamma_{2},\gamma_{3}$:
\begin{align*}
\begin{aligned}\ \\ \mathfrak{C}_{1}: \\ (k\geq 1)\end{aligned} & \begin{cases}&\text{up to cycling around, there is a value $z_{1}$} \\ &\text{for which $\eta_{1}(x+1)=\eta_{2}(x+1)=\eta_{3}(x+1)=z_{1}$.} \end{cases} \\
\begin{aligned}\ \\ \mathfrak{C}_{2}: \\ (k\geq 2)\end{aligned} & \begin{cases}&\text{up to cycling around, there are values $z_{1},z_{2}$} \\ &\text{for which $\eta_{1}(x+1)=\eta_{3}(x+1)=z_{1}$} \\ &\text{and $\eta_{1}(x+2)=\eta_{2}(x+2)=z_{2}$.} \end{cases} \\
\begin{aligned}\ \\ \mathfrak{C}_{3}: \\ (k\geq 2)\end{aligned} & \begin{cases}&\text{up to cycling around, there are values $z_{1},z_{2}$} \\ &\text{for which $\eta_{1}(x+1)=\eta_{2}(x+1)=\eta_{3}(x+1)=z_{1}$} \\ &\text{and $\eta_{1}(x+2)=\eta_{2}(x+2)=z_{2}$.} \end{cases} \\
\begin{aligned}\ \\ \mathfrak{C}_{4}: \\ (k\geq 2)\end{aligned} & \begin{cases}&\text{up to cycling around, there are values $z_{1},z_{2}$} \\ &\text{for which $\eta_{1}(x+1)=\eta_{2}(x+1)=z_{1}$} \\ &\text{and $\eta_{1}(x+2)=\eta_{2}(x+2)=\eta_{3}(x+2)=z_{2}$.} \end{cases} \\
\begin{aligned}\ \\ \mathfrak{C}_{5}: \\ (k\geq 2)\end{aligned} & \begin{cases}&\text{up to cycling around, there are values $z_{1},z_{2}$} \\ &\text{for which $\eta_{1}(x+1)=\eta_{2}(x+1)=\eta_{3}(x+1)=z_{2}$} \\ &\text{and $\eta_{1}(x+2)=\eta_{3}(x')=z_{1}$ for some position $x'$ in $\gamma_{3}$.} \end{cases} \\
\begin{aligned}\ \\ \mathfrak{C}_{6}: \\ (k\geq 2)\end{aligned} & \begin{cases}&\text{up to cycling around, there are values $z_{1},z_{2}$} \\ &\text{for which $\eta_{1}(x+1)=\eta_{3}(x')=z_{1}$ for some position $x'$ in $\gamma_{3}$} \\ &\text{and $\eta_{1}(x+2)=\eta_{2}(x+2)=\eta_{3}(x+2)=z_{2}$.} \end{cases}
\end{align*}
The various instances of ``cycling around'' need not be the same, nor does $x'$ have to be shared with $\gamma_{1},\gamma_{2}$. Visually, the properties look like
\begin{align*}
\mathfrak{C}_{1}:& \begin{cases}\begin{aligned}
\hphantom{\square}\hphantom{\square}\blacksquare\ z_{1} \ \hphantom{\square} \blacksquare \hphantom{\square} \\
\hphantom{\square}\blacksquare\hphantom{\square}\ z_{1} \ \blacksquare \hphantom{\square} \hphantom{\square} \\
\blacksquare\hphantom{\square}\hphantom{\square}\ z_{1} \ \hphantom{\square} \hphantom{\square} \blacksquare 
\end{aligned}\end{cases} & 
\!\!\!\mathfrak{C}_{2}:& \begin{cases}\begin{aligned}
\hphantom{\square}\hphantom{\square}\blacksquare\ z_{1}\,z_{2} \ \hphantom{\square} \blacksquare \hphantom{\square} \\
\hphantom{\square}\blacksquare\hphantom{\square}\ \hphantom{z_{1}}\,z_{2} \ \blacksquare \hphantom{\square} \hphantom{\square} \\
\blacksquare\hphantom{\square}\hphantom{\square}\ z_{1}\,\hphantom{z_{2}} \ \hphantom{\square} \hphantom{\square} \blacksquare 
\end{aligned}\end{cases} & 
\!\!\!\mathfrak{C}_{3}:& \begin{cases}\begin{aligned}
\hphantom{\square}\hphantom{\square}\blacksquare\ z_{1}\,z_{2} \ \hphantom{\square} \blacksquare \hphantom{\square} \\
\hphantom{\square}\blacksquare\hphantom{\square}\ z_{1}\,z_{2} \ \blacksquare \hphantom{\square} \hphantom{\square} \\
\blacksquare\hphantom{\square}\hphantom{\square}\ z_{1}\,\hphantom{z_{2}} \ \hphantom{\square} \hphantom{\square} \blacksquare 
\end{aligned}\end{cases} \\
\mathfrak{C}_{4}:& \begin{cases}\begin{aligned}
\hphantom{\square}\hphantom{\square}\blacksquare\ z_{1}\,z_{2} \ \hphantom{\square} \blacksquare \hphantom{\square} \\
\hphantom{\square}\blacksquare\hphantom{\square}\ z_{1}\,z_{2} \ \blacksquare \hphantom{\square} \hphantom{\square} \\
\blacksquare\hphantom{\square}\hphantom{\square}\ \hphantom{z_{1}}\,z_{2} \ \hphantom{\square} \hphantom{\square} \blacksquare 
\end{aligned}\end{cases} & 
\!\!\!\mathfrak{C}_{5}:& \begin{cases}\begin{aligned}
\hphantom{\square}\hphantom{\square}\blacksquare\ z_{1}\,\hphantom{z_{2}}\hphantom{\square} \hphantom{\square} \blacksquare \hphantom{\square} \\
\hphantom{\square}\blacksquare\hphantom{\square}\ z_{1}\, z_{2}\hphantom{\square} \blacksquare \hphantom{\square} \hphantom{\square} \\
\blacksquare\, z_{2}\ \hphantom{\square} z_{1}\,\hphantom{z_{2}\square} \hphantom{\square} \hphantom{\square} \blacksquare 
\end{aligned}\end{cases} & 
\!\!\!\mathfrak{C}_{6}:& \begin{cases}\begin{aligned}
\hphantom{\square}\hphantom{\square}\blacksquare\ z_{1}\,z_{2}\hphantom{\square} \hphantom{\square} \blacksquare \hphantom{\square} \\
\hphantom{\square}\blacksquare\hphantom{\square}\ \hphantom{z_{1}}\, z_{2}\hphantom{\square} \blacksquare \hphantom{\square} \hphantom{\square} \\
\blacksquare\, z_{1}\ \hphantom{\square z_{1}}\,z_{2}\hphantom{\square} \hphantom{\square} \hphantom{\square} \blacksquare 
\end{aligned}\end{cases}
\end{align*}
We can compose properties: if $\mathfrak{C},\mathfrak{C}'$ are two properties, we write $\mathfrak{C}\cap\mathfrak{C}'$ to indicate that both of them hold, and we write $\mathfrak{C}\mathfrak{C}'$ to indicate that they hold using disjoint sets of values. Again, the instances of ``cycling around'' for $\mathfrak{C},\mathfrak{C}'$ in their composition need not be the same.

The solution $(\eta_{1},\eta_{2},\eta_{3},\phi_{1},\phi_{2},\phi_{3},\lambda)$ is \textit{aligned} if, for all triples of cycles $\gamma_{i}\in(\eta_{i},\phi_{i})$, the following properties hold:
\begin{enumerate}[(1)]
\item\label{de:aligned-al1} if $\gamma_{1},\gamma_{2},\gamma_{3}$ share $\geq 1$ common positions, then $\mathfrak{C}_{1}$ holds for $\gamma_{1},\gamma_{2},\gamma_{3}$;
\item\label{de:aligned-alp} if $\gamma_{1},\gamma_{2},\gamma_{3}$ share $\geq 2$ common positions, and they share a break ($a$ or $b$) with $\lambda(a)\in\mathcal{P}$ and $\lambda(a-1)\notin\mathcal{L}$, or $\lambda(b)\in\mathcal{P}$ and $\lambda(b+1)\notin\mathcal{L}$, then $\mathfrak{C}_{1}^{2}\cap\mathfrak{C}_{1}\mathfrak{C}_{2}$ holds for $\gamma_{1},\gamma_{2},\gamma_{3}$;
\item\label{de:aligned-all} if $\gamma_{1},\gamma_{2},\gamma_{3}$ share $\geq 2$ common positions, and they share a break ($a$ or $b$) with $\lambda(a)\in\mathcal{L}$, or $\lambda(b)\in\mathcal{L}$, then $\mathfrak{C}_{1}^{3}\mathfrak{C}_{3}^{2}\mathfrak{C}_{4}^{2}\mathfrak{C}_{5}\mathfrak{C}_{6}$ holds for $\gamma_{1},\gamma_{2},\gamma_{3}$;
\item\label{de:aligned-alpl} if $\gamma_{1},\gamma_{2},\gamma_{3}$ share $\geq 2$ common positions, and they share a break ($a$ or $b$) with $(\lambda(a-1),\lambda(a))\in\mathcal{L}\times\mathcal{P}$, or $(\lambda(b),\lambda(b+1))\in\mathcal{P}\times\mathcal{L}$, then $\mathfrak{C}_{1}^{4}\mathfrak{C}_{2}\mathfrak{C}_{3}^{2}\mathfrak{C}_{4}^{2}\mathfrak{C}_{5}\mathfrak{C}_{6}$ holds for $\gamma_{1},\gamma_{2},\gamma_{3}$;
\item\label{de:aligned-disj} if $(\gamma_{1},\gamma_{2},\gamma_{3})\neq(\gamma'_{1},\gamma'_{2},\gamma'_{3})$, the two sets of values used in \eqref{de:aligned-al1}--\eqref{de:aligned-alp}--\eqref{de:aligned-all}--\eqref{de:aligned-alpl} for the two triples are disjoint from each other\footnote{This is not implicit in the act of taking distinct triples. For instance the three element strings $\blacksquare 1 2 \blacksquare 3 4 \blacksquare$, $\blacksquare 1 4 2 3 \blacksquare$, $\blacksquare 1 3 2 4 \blacksquare$ satisfy $\mu(\eta_{1},\phi_{1})\mu(\eta_{2},\phi_{2})=\mu(\eta_{3},\phi_{3})$ and for the four cycles $\gamma_{1},\gamma'_{1},\gamma_{2},\gamma_{3}$ we have that $(\gamma_{1},\gamma_{2},\gamma_{3}),(\gamma'_{1},\gamma_{2},\gamma_{3})$ both satisfy $\mathfrak{C}_{5}$, but not with disjoint sets of values: we must choose $(z_{1},z_{2})\in\{(1,4),(2,3)\}$ for the first triple and $(z_{1},z_{2})\in\{(3,1),(4,2)\}$ for the second triple.}.
\end{enumerate}
Visually, the properties look like
\begin{align*}
\eqref{de:aligned-al1}:\ & \begin{cases}\begin{aligned}
\hphantom{\square}\hphantom{\square}\blacksquare\ \hphantom{\square \mathfrak{C}_{1}\square} \ \hphantom{\square} \blacksquare \hphantom{\square} \\
\hphantom{\square}\blacksquare\hphantom{\square}\ \hphantom{\square} \mathfrak{C}_{1}\hphantom{\square} \ \blacksquare \hphantom{\square} \hphantom{\square} \\
\blacksquare\hphantom{\square}\hphantom{\square}\ \hphantom{\square \mathfrak{C}_{1}\square} \ \hphantom{\square} \hphantom{\square} \blacksquare \\
\text{(any $\lambda$)}\hphantom{nmmm}
\end{aligned}\end{cases} &
\eqref{de:aligned-alp}:\ & \begin{cases}\begin{aligned}
\hphantom{\square}\hphantom{\square}\blacksquare\ \hphantom{\square \mathfrak{C}_{1}^{2}\cap\mathfrak{C}_{1}\mathfrak{C}_{2} \square} \ \hphantom{\square} \blacksquare \ \blacksquare \\
\hphantom{\square}\blacksquare\hphantom{\square}\ \hphantom{\square} \mathfrak{C}_{1}^{2}\cap\mathfrak{C}_{1}\mathfrak{C}_{2} \hphantom{\square} \hphantom{\square} \ \blacksquare \ \blacksquare \\
\blacksquare\hphantom{\square}\hphantom{\square}\ \hphantom{\square \mathfrak{C}_{1}^{2}\cap\mathfrak{C}_{1}\mathfrak{C}_{2} \square} \ \hphantom{\square} \blacksquare \ \blacksquare \\
\mathcal{P} \ \xcancel{\mathcal{L}}
\end{aligned}\end{cases} \\
\eqref{de:aligned-all}:\ & \begin{cases}\begin{aligned}
\hphantom{\square}\hphantom{\square}\blacksquare\ \hphantom{\square \mathfrak{C}_{1}^{3}\mathfrak{C}_{3}^{2}\mathfrak{C}_{4}^{2}\mathfrak{C}_{5}\mathfrak{C}_{6} \square} \ \blacksquare \\
\hphantom{\square}\blacksquare\hphantom{\square}\ \hphantom{\square} \mathfrak{C}_{1}^{3}\mathfrak{C}_{3}^{2}\mathfrak{C}_{4}^{2}\mathfrak{C}_{5}\mathfrak{C}_{6} \hphantom{\square} \ \blacksquare \\
\blacksquare\hphantom{\square}\hphantom{\square}\ \hphantom{\square \mathfrak{C}_{1}^{3}\mathfrak{C}_{3}^{2}\mathfrak{C}_{4}^{2}\mathfrak{C}_{5}\mathfrak{C}_{6} \square} \ \blacksquare \\
\mathcal{L}
\end{aligned}\end{cases} &
\eqref{de:aligned-alpl}:\ & \begin{cases}\begin{aligned}
\hphantom{\square}\hphantom{\square}\blacksquare\ \hphantom{\square \mathfrak{C}_{1}^{4}\mathfrak{C}_{2}\mathfrak{C}_{3}^{2}\mathfrak{C}_{4}^{2}\mathfrak{C}_{5}\mathfrak{C}_{6} \square} \ \blacksquare \ \blacksquare \\
\hphantom{\square}\blacksquare\hphantom{\square}\ \hphantom{\square} \mathfrak{C}_{1}^{4}\mathfrak{C}_{2}\mathfrak{C}_{3}^{2}\mathfrak{C}_{4}^{2}\mathfrak{C}_{5}\mathfrak{C}_{6} \hphantom{\square} \ \blacksquare \ \blacksquare \\
\blacksquare\hphantom{\square}\hphantom{\square}\ \hphantom{\square \mathfrak{C}_{1}^{4}\mathfrak{C}_{2}\mathfrak{C}_{3}^{2}\mathfrak{C}_{4}^{2}\mathfrak{C}_{5}\mathfrak{C}_{6} \square} \ \blacksquare \ \blacksquare \\
\mathcal{P} \ \mathcal{L}
\end{aligned}\end{cases} 
\end{align*}
\end{definition}

\subsection{Other notation}\label{se:notation-other}

We collect here some additional notation. If $C_{1},C_{2},C_{3}\in\mathcal{C}_{n}$, define
\begin{equation}\label{eq:nu}
\nu_{C_{1},C_{2},C_{3}}:=\left| \vphantom{\tilde{P}_{n}} \{i\in\{1,2,3\}\,|\,C_{i}\subseteq\mathrm{Alt}(n)\}\right|\in\{0,1,2,3\}.
\end{equation}
Clearly, if $\nu_{C_{1},C_{2},C_{3}}$ is even then we cannot have any $\alpha_{i}\in C_{i}$ with $\alpha_{1}\alpha_{2}=\alpha_{3}$. For a triple of strings $\phi_{1},\phi_{2},\phi_{3}\in\Phi_{n}$ and a half-position $a\in\tilde{P}_{n}$ (mostly known by context), we will denote by $c^{\blacksquare}(\phi_{i})$ and $c^{\blacksquare}(a)$ two functions that count the number of values $\blacksquare$, i.e.
\begin{align}\label{eq:functbcount}
c^{\blacksquare}(\phi_{i}) & :=\left|\{x\in\tilde{P}_{n}\,|\,\phi_{i}(x)=\blacksquare\}\right|, & c^{\blacksquare}(a) & :=\left|\{j\,|\,\phi_{j}(a)=\blacksquare\}\right|.
\end{align}

In future discussions, for brevity we shall occasionally use shortened notations for properties that the string triples must respect. For instance we may ask for $(\phi_{1},\phi_{2},\phi_{3},\lambda)\in\mathfrak{S}$ to satisfy the property ``$\mathfrak{P}(\text{labels})$'' defined as follows:
\begin{equation}\label{eq:prop-labels}
\mathfrak{P}(\mathrm{labels}): \begin{cases}&\text{for all $a$, if $\lambda(a)\in\mathcal{N}$ then $c^{\blacksquare}(a)=0$,} \\ &\text{and if $\lambda(a)\in\mathcal{L}\cup\mathcal{T}\cup\mathcal{S}\cup\mathcal{P}$ then $c^{\blacksquare}(a)=3$.}\end{cases}
\end{equation}
Other properties will be defined when appropriate.

Again for brevity, starting from Lemma~\ref{le:2to1} and occasionally for the rest of the paper, when writing down solutions or elements in an explicit way we may write ``$[a \overset{\text{odds}}{\cdots} b]$'' to indicate a sequence made of all the odd numbers from $a$ to $b$ (included), and similarly ``$[a \overset{\text{evens}}{\cdots} b]$'' and ``$[a \overset{\text{all}}{\cdots} b]$''. We extend the same notation to indices, meaning that ``$[x_{a} \overset{\text{odds}}{\cdots} x_{b}]$'' denotes a sequence made of all $x_{i}$ with $a\leq i\leq b$ odd, and analogously for the other cases.

Another way in which we may abbreviate elements is the following. If $\alpha\in\mathrm{Sym}(n)$ and $r\in V_{n}$ is a value of some significance to us, we may write $\alpha=\vec{\beta}(r\ \vec{\rho})$: here, $\vec{\rho}$ is the sequence of elements in the same cycle as $r$, say $\vec{\rho}$ is made of elements $\rho_{1},\ldots,\rho_{k}\in V_{n}$ with $\alpha(r)=\rho_{1}$, $\alpha(\rho_{i})=\rho_{i+1}$ for $1\leq i<k$, and $\alpha(\rho_{k})=r$, whereas $\vec{\beta}$ is the product of all the disjoint cycles of $\alpha$ not containing $r$. Both $\vec{\rho}$ and $\vec{\beta}$ might be empty and, if we have $\alpha_{i}=\vec{\beta}_{i}(r\ \vec{\rho}_{i})$ for several indices $i$, the $\vec{\rho}_{i},\vec{\beta}_{i}$ do not necessarily have the same length for all $i$.

\section{Reduction~\ref{re:1}: passing to $\mathrm{Sym}(n)$}\label{se:re1}

In the first reduction, we simply pass from a problem in $\mathrm{Alt}(n)$ to a problem in $\mathrm{Sym}(n)$.

Fix some $g\in\mathrm{Sym}(n)\setminus\mathrm{Alt}(n)$. If $C_{i}\in\mathcal{A}_{n}$ then $C'_{i}:=C_{i}\cup(C_{i})^{g}\in\mathcal{C}_{n}$ by Proposition~\ref{pr:cyclestr}, independently from the choice of $g$. Moreover, taking $n$ large depending on $\delta$, if $C_{i}\in\mathcal{A}_{n}(\delta)$ then $C'_{i}\in\mathcal{C}_{n}(2\delta)$. Finally, $\nu_{C'_{1},C'_{2},C'_{3}}=3$ by construction.

Let
\begin{equation*}
\mathcal{X}_{1}(n,\delta):=\{\, (D_{1},D_{2},D_{3})\in\mathcal{C}_{n}^{\times 3}(\delta)\ |\ \text{$\nu_{D_{1},D_{2},D_{3}}$ odd}\, \},
\end{equation*}
and let
\begin{align*}
\theta_{1} & :\mathcal{A}_{n}(\delta)^{\times 3}\rightarrow\mathcal{X}_{1}(n,2\delta), & \theta_{1}(C_{1},C_{2},C_{3})=(C_{1}\cup(C_{1})^{g},C_{2}\cup(C_{2})^{g},C_{3}\cup(C_{3})^{g})
\end{align*}
for any fixed $g\in\mathrm{Sym}(n)\setminus\mathrm{Alt}(n)$. By the previous discussion, the definition of $\theta_{1}$ makes sense for all $n$ large enough with respect to $\delta$.

\begin{reduction}\label{re:1}
Find, for any $(C_{1},C_{2},C_{3})\in\mathcal{X}_{1}(n,\delta_{1})$, elements $\alpha_{i}\in C_{i}$ with $\alpha_{1}\alpha_{2}=\alpha_{3}$. Furthermore, the $\alpha_{i}$ must satisfy the following property: there are $x_{1},\ldots,x_{5}\in P_{n}$ such that $(x_{i})^{\alpha_{1}}=(x_{i})^{\alpha_{2}}=x_{i+1}$ for $1\leq i\leq 4$.
\end{reduction}

\begin{lemma}\label{le:1to0}
If there is a solution for Reduction~\ref{re:1}, for all $\delta_{1}>0$ small enough and all $n$ large enough depending on $\delta_{1}$, then Theorem~\ref{th:main2} holds, for all $\delta>0$ small enough and all $n$ large enough depending on $\delta$.
\end{lemma}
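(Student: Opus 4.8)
The plan is to show that Reduction~\ref{re:1} is strong enough to recover Theorem~\ref{th:main2}, which amounts to verifying that the map $\theta_1$ does the right bookkeeping and that the extra ``five positions'' condition is harmless at this stage. So suppose we have a solution for Reduction~\ref{re:1} valid for all sufficiently small $\delta_1>0$ and all $n$ large depending on $\delta_1$. Let $\delta>0$ be small (to be constrained below) and $n$ large depending on $\delta$. Given three classes $C_1,C_2,C_3\in\mathcal{A}_n(\delta)$, fix some $g\in\mathrm{Sym}(n)\setminus\mathrm{Alt}(n)$ and set $(D_1,D_2,D_3):=\theta_1(C_1,C_2,C_3)=(C_1\cup C_1^g,\,C_2\cup C_2^g,\,C_3\cup C_3^g)$. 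By Proposition~\ref{pr:cyclestr} each $D_i$ is a single conjugacy class of $\mathrm{Sym}(n)$, independent of the choice of $g$, and since $|D_i|\ge|C_i|\ge|\mathrm{Alt}(n)|^{1-\delta}=\bigl(\tfrac12|\mathrm{Sym}(n)|\bigr)^{1-\delta}\ge|\mathrm{Sym}(n)|^{1-2\delta}$ for $n$ large, we get $D_i\in\mathcal{C}_n(2\delta)$. By construction every $D_i\subseteq\mathrm{Alt}(n)$, so $\nu_{D_1,D_2,D_3}=3$, which is odd; hence $(D_1,D_2,D_3)\in\mathcal{X}_1(n,2\delta)$. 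Choosing $\delta$ small enough that $\delta_1:=2\delta$ falls in the range where Reduction~\ref{re:1} is solved, we obtain elements $\beta_i\in D_i$ with $\beta_1\beta_2=\beta_3$ (the five-positions clause is automatically part of what the solution provides; we simply discard it here, as it plays no role for Theorem~\ref{th:main2}).

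Next I would translate this back to $\mathrm{Alt}(n)$. Each $\beta_i$ lies in $D_i=C_i\cup C_i^g$, so either $\beta_i\in C_i$ or $\beta_i\in C_i^g$, i.e.\ $\beta_i\in C_i$ or $\beta_i^{g^{-1}}\in C_i$ (using $C_i^g=\{x^g:x\in C_i\}$ and that $(C_i^g)^{g^{-1}}=C_i$). Write $\varepsilon_i\in\{0,1\}$ with $\beta_i^{g^{-\varepsilon_i}}\in C_i$. Since $\beta_1\beta_2=\beta_3$ is an identity in $\mathrm{Sym}(n)$, conjugating by a suitable power of $g$ lets us line up the exponents: the point is that $C_1C_2\supseteq C_3$ (the conclusion of Theorem~\ref{th:main2}) is, by Remark~\ref{re:littleplay}, equivalent to the existence of \emph{some} triple $\alpha_i\in C_i$ with $\alpha_1\alpha_2=\alpha_3$, and we have a product identity among representatives of $C_1\cup C_1^g$, $C_2\cup C_2^g$, $C_3\cup C_3^g$. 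The remaining task is a short parity/casework argument: one checks that for any choice of which ``half'' of each $D_i$ the element $\beta_i$ lands in, one can conjugate simultaneously (and, if necessary, use that $C_i$ and $C_i^g$ are swapped by conjugation by $g$, while $\alpha_1\alpha_2=\alpha_3$ is preserved under simultaneous conjugation of all three) to land all three representatives inside $C_1,C_2,C_3$ respectively. Concretely: replacing $(\beta_1,\beta_2,\beta_3)$ by $(\beta_1^h,\beta_2^h,\beta_3^h)$ for $h\in\mathrm{Sym}(n)$ preserves the identity and the classes $D_i$ (they are $\mathrm{Sym}(n)$-classes); and for the individual $\mathrm{Alt}(n)$-classes, conjugating by $g$ swaps $C_i\leftrightarrow C_i^g$ for every $i$ that is actually split, while fixing the non-split ones. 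One then argues that the number of indices $i$ with $\beta_i\in C_i^g$ (among the split ones) can be reduced to zero: if it is nonzero, conjugate everything by $g$; this toggles \emph{all} split indices at once, so combined with the fact that at least one index is non-split (or by directly checking the $2^3$ patterns against the constraint that the identity $\beta_1\beta_2=\beta_3$ already respects alternating/symmetric parities) one reaches an all-$C_i$ configuration.

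Let me be more careful about that last point, since it is the only place a genuine obstacle could hide. The potential subtlety is that conjugation by $g$ toggles \emph{all} split indices simultaneously, so if exactly one of the three classes is split and $\beta$ for that index happens to lie in the ``wrong'' coset, a single global conjugation fixes it; but if two or three are split, toggling all of them at once might not suffice to clear all wrong cosets. The resolution is that $\beta_1\beta_2=\beta_3$ forces a compatibility: if we momentarily pass to $\mathrm{Sym}(n)/\mathrm{Alt}(n)\cong\mathbb{Z}/2$, all three $\beta_i$ are even, so that constraint is vacuous there; instead the right invariant is more delicate and is exactly why Reduction~\ref{re:1} is phrased with the explicit positions $x_1,\dots,x_5$. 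Rather than fight this here, the cleaner route—and I expect this is what the authors do—is to observe that the solution to Reduction~\ref{re:1} can itself be adjusted: the freedom to conjugate all three $\beta_i$ by any fixed element of $\mathrm{Alt}(n)$ or $\mathrm{Sym}(n)$, together with the way $\theta_1$ is set up (it symmetrically doubles each class), means one may as well have chosen the representatives inside the $C_i$ from the start. In other words, one re-runs the production of the Reduction~\ref{re:1} solution but, at the final step, picks for each $i$ the representative of $D_i$ lying in $C_i$ specifically; because $D_i=C_i\sqcup C_i^g$ (or $D_i=C_i$ if non-split) and the solution is built by an explicit construction that can be conjugated freely, this incurs no loss. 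The main obstacle to writing this cleanly is therefore purely bookkeeping: making the ``choose the representative in $C_i$'' step rigorous without circularity, which I would handle by invoking Remark~\ref{re:littleplay} in the form ``$C_1C_2\supseteq C_3$ iff $\exists\,\alpha_i\in C_i$ with $\alpha_1\alpha_2=\alpha_3$'' and then noting that the Reduction~\ref{re:1} solution gives $D_1D_2\supseteq D_3$, from which a direct argument (split into the cases $\nu=1$ and $\nu=3$ for the original $C_i$, using that $\nu_{D_1,D_2,D_3}=3$ throughout) extracts the desired inclusion $C_1C_2\supseteq C_3$, giving Theorem~\ref{th:main2} with the constant $\delta$ chosen so that $2\delta$ lies in the admissible range for Reduction~\ref{re:1}.
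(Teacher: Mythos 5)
There is a genuine gap, and it sits exactly where you sensed trouble. Early on you write that the five-positions clause of Reduction~\ref{re:1} ``plays no role for Theorem~\ref{th:main2}'' and discard it; in fact that clause is the entire content of this lemma. A solution $\beta_1\beta_2=\beta_3$ with $\beta_i\in D_i=C_i\cup C_i^g$ only tells you that \emph{some} combination $(C_1^{\pm})(C_2^{\pm})\supseteq C_3^{\pm}$ holds; it does not give $C_1C_2\supseteq C_3$ for the specific $\mathrm{Alt}(n)$-classes you started with. Your two proposed fixes both fail. Simultaneous conjugation toggles all split indices at once, so (as you yourself note) it cannot independently steer two or three split classes into the correct cosets; and ``re-running the construction, picking the representative of $D_i$ lying in $C_i$'' is precisely the statement to be proved, not something that comes for free from the construction being explicit --- there is no a priori reason a product identity survives replacing each factor by an arbitrary element of the other $\mathrm{Alt}(n)$-half. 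Likewise, $D_1D_2\supseteq D_3$ does not yield $C_1C_2\supseteq C_3$ by any ``short parity/casework argument'': this is exactly the difficulty confronted in Proposition~\ref{pr:dvir2}, where the case that both $C_1$ and $C_2$ split requires genuinely new input.

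The paper's resolution is the five shared positions. Writing $x_1,\dots,x_5$ as $1,\dots,5$, both $\alpha_1$ and $\alpha_2$ send $i\mapsto i+1$ there, so for $\sigma=(1\,3)(2\,4)$ one has $\alpha_1\sigma=(\alpha_1)^{(1\,3)}$ and $\sigma\alpha_2=(\alpha_2)^{(2\,4)}$: the products $\alpha_1\sigma$ and $\sigma^{-1}\alpha_2$ are conjugates of $\alpha_1,\alpha_2$ by \emph{odd} permutations, hence land in $C_1^{\perp},C_2^{\perp}$, while their product is still $\alpha_3$. Together with the analogous manipulations using $\tau=(1\,3\,5\,2\,4)$ and a global conjugation by $(1\,2)$, this produces solutions for all eight coset combinations from the single given one, which is what closes the gap you left open. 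Your framework (the bookkeeping for $\theta_1$, the reduction of Theorem~\ref{th:main2} to finding a triple $\alpha_i\in C_i$ via Remark~\ref{re:littleplay}) is fine; the missing step is this explicit use of the five positions.
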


\begin{proof}
Let $C_{1},C_{2},C_{3}\in\mathcal{A}_{n}(\delta)$ with $\delta,n$ such that the function $\theta_{1}$ is well-defined and there is a solution for Reduction~\ref{re:1} with $\delta_{1}=2\delta$. Thus, let $\alpha_{i}$ be as in Reduction~\ref{re:1} for $\theta_{1}(C_{1},C_{2},C_{3})$.

Since $C_{i},(C_{i})^{g}$ are two (possibly identical) conjugacy classes in $\mathrm{Alt}(n)$ giving the same $C_{i}\cup(C_{i})^{g}$, it is enough to start from our triple of $\alpha_{i}$ and produce solutions for the eight possible class combinations. Without loss of generality, say that $\alpha_{i}\in C_{i}$ for all $i$, and call $C_{i}^{\perp}=(C_{i})^{g}$. Rename $x_{1},\ldots,x_{5}$ in Reduction~\ref{re:1} as $1,\ldots,5$ for simplicity, and as in \S\ref{se:notation-other} write
\begin{align*}
\alpha_{1} & =(1\ 2\ 3\ 4\ 5\ \vec{\rho}_{1})\vec{\beta}_{1}, & \alpha_{2} & =(1\ 2\ 3\ 4\ 5\ \vec{\rho}_{2})\vec{\beta}_{2}.
\end{align*}

Let $\sigma=(1\ 3)(2\ 4)=\sigma^{-1}$ and $\tau=(1\ 3\ 5\ 2\ 4)$. Then, in addition to the already known solution with $(\alpha_{1},\alpha_{2},\alpha_{3})\in(C_{1},C_{2},C_{3})$, three more solutions are given as follows:
\begin{align*}
& \begin{aligned}
\alpha'_{1} & =\alpha_{1}\sigma=(\alpha_{1})^{(1\ 3)} \\ \alpha'_{2} & =\sigma\alpha_{2}=(\alpha_{2})^{(2\ 4)}
\end{aligned}
& & \Longrightarrow & \alpha'_{1}\alpha'_{2} & =\alpha_{3}, & (\alpha'_{1},\alpha'_{2},\alpha_{3}) & \in(C_{1}^{\perp},C_{2}^{\perp},C_{3}), \\
& \begin{aligned}
\alpha''_{1} & =\alpha_{1}\tau=(\alpha_{1})^{(1\ 3\ 4\ 2)} \\ \alpha''_{2} & =\tau^{-1}\alpha_{2}=(\alpha_{2})^{(2\ 5)(3\ 4)}
\end{aligned} 
& & \Longrightarrow & \alpha''_{1}\alpha''_{2} & =\alpha_{3}, & (\alpha''_{1},\alpha''_{2},\alpha_{3}) & \in(C_{1}^{\perp},C_{2},C_{3}), \\
& \begin{aligned}
\alpha'''_{1} & =\alpha_{1}\tau^{-1}=(\alpha_{1})^{(1\ 4)(2\ 5)} \\ \alpha'''_{2} & =\tau\alpha_{2}=(\alpha_{2})^{(2\ 4\ 5\ 3)}
\end{aligned} 
& & \Longrightarrow & \alpha'''_{1}\alpha'''_{2} & =\alpha_{3}, & (\alpha'''_{1},\alpha'''_{2},\alpha_{3}) & \in(C_{1},C_{2}^{\perp},C_{3}).
\end{align*}
The remaining four solutions with $C_{3}^{\perp}$ can be found by conjugating the four known solutions by $(1\ 2)\notin\mathrm{Alt}(n)$. The result follows.
\end{proof}

\section{Reduction~\ref{re:2}: short even cycles are even}\label{se:re2}

In the second reduction, we make sure that in each class there is an even number of short cycles of even length. We fix\footnote{The choice of $31$ is tight for this procedure. Reduction~\ref{re:3} removes cycles of length $\leq 31$ and, although they can be reintroduced later in a controlled way, at the end we have properties $\mathfrak{P}(\mathrm{sub}')$ and $\mathfrak{P}(\mathcal{L}',19)$ in $\mathcal{X}_{7}$ (see \eqref{eq:prop-lprime} and \eqref{eq:prop-subprime}): here, $19$ comes from shortening $31$ by $12$ places. Then, to make the solution aligned in Proposition~\ref{pr:7good}, in the worst case we might need $19$ values to play with.} the meaning of ``short'' as being $\leq 31$.

For $C\in\mathcal{C}_{n}$, let $s_{k}(C)$ be the number of cycles of $C$ of length $\leq k$ and even, i.e.\ $s_{k}(C):=\sum_{i=1}^{\lfloor k/2\rfloor}n_{2i}(C)$, and let $t(C)$ be the length of the largest cycle in $C$, i.e.\ $t(C):=\max\{i|n_{i}(C)\geq 1\}$. Given a triple $C_{1},C_{2},C_{3}$, let $J_{k}\subseteq\{1,2,3\}$ be the set of indices $j$ for which $s_{k}(C_{j})$ is odd: if $j\in J_{k}$ then $C_{j}$ has a cycle of even length $\leq k$, and in that case call $r_{k}(C_{j})$ the length of the largest such cycle, whereas if $j\notin J_{k}$ set $r_{k}(C_{j})=0$. Define\footnote{In passing from $n$ to $n'$ below, every time we remove the $r_{k}(C_{i})$-cycle, we also choose to remove $1$ extra point to keep $\nu$ odd, and we remove $8$ extra points in order to have $5$ consecutive common values as required by Reduction~\ref{re:1}.}
\begin{align*}
R_{k} & =R_{k}(C_{1},C_{2},C_{3}):=8+\sum_{j\in J_{k}}(r_{k}(C_{j})+1), & n' & :=n-R_{k}.
\end{align*}
For fixed $k$ the range of values in which $n'$ can vary is $[n-3k-11,n-8]$, regardless of the choice of $C_{j}$.

For each $C_{i}$, let $C'_{i}\in\mathcal{C}_{n'}$ be the class obtained from $C_{i}$ by removing the fixed $r_{k}(C_{i})$-cycle (if any) and replacing a $t(C_{i})$-cycle with a $t'_{i}$-cycle for $t'_{i}:=t(C_{i})+r_{k}(C_{i})-R_{k}$. For $C_{i}\in\mathcal{C}_{n}(\delta_{1})$ and $n$ large depending on $\delta_{1}$ we have $t(C_{i})\geq 1000$ by Corollary~\ref{co:classbig}\eqref{co:classbig-one}, so in that case the definition of $C'_{i}$ makes sense for $k=31$, and in particular $t'_{i}\geq 1000-2\cdot 31-11>31$. By Corollary~\ref{co:classbig}\eqref{co:classbig-cut}, if $C_{i}\in\mathcal{C}_{n}(\delta_{1})$ then also $C'_{i}\in\mathcal{C}_{n}(2\delta_{1})$. Checking the changes in cycle lengths case by case, one verifies also that if $\nu_{C_{1},C_{2},C_{3}}$ is odd then $\nu_{C'_{1},C'_{2},C'_{3}}$ is odd. Finally, by the procedure just described, each $C'_{i}$ has now an even number of cycles of length $\leq k$ and even, i.e.\ $J_{k}=\emptyset$ for this new triple of classes $C'_{i}$.

Let
\begin{align*}
\mathcal{X}_{2}(m,\delta):= \ & \left\{\left. (D_{1},D_{2},D_{3},t_{1},t_{2},t_{3})\in\mathcal{C}_{m}^{\times 3}(\delta)\times\mathbb{N}_{>31}^{\times 3} \ \right|\right. \\
 & \text{$\nu_{D_{1},D_{2},D_{3}}$ odd, }J_{31}=\emptyset, \ \text{$D_{i}$ has a $t_{i}$-cycle} \ \},
\end{align*}
and let
\begin{align*}
\theta_{2} & :\mathcal{X}_{1}(n,\delta_{1})\rightarrow\bigcup_{n'=n-104}^{n-8}\mathcal{X}_{2}(n',2\delta_{1}), & \theta_{2}(C_{1},C_{2},C_{3})=(C'_{1},C'_{2},C'_{3},t'_{1},t'_{2},t'_{3}),
\end{align*}
where the $C'_{i}$ and the $t'_{i}$ are constructed as above. By the previous discussion, the definition of $\theta_{2}$ makes sense for all $n$ large enough depending on $\delta_{1}$.

\begin{reduction}\label{re:2}
Find, for any $(C_{1},C_{2},C_{3},t_{1},t_{2},t_{3})\in\mathcal{X}_{2}(n,\delta_{2})$, elements $\alpha_{i}\in C_{i}$ with $\alpha_{1}\alpha_{2}=\alpha_{3}$. Furthermore, the $\alpha_{i}$ must satisfy the following property: there is a triple of cycles, of length $t_{1},t_{2},t_{3}$ in $\alpha_{1},\alpha_{2},\alpha_{3}$ respectively, sharing at least one value $r$.
\end{reduction}

\begin{lemma}\label{le:2to1}
If there is a solution for Reduction~\ref{re:2}, for all $\delta_{2}>0$ small enough and all $n$ large enough depending on $\delta_{2}$, then there is a solution for Reduction~\ref{re:1}, for all $\delta_{1}>0$ small enough and all $n$ large enough depending on $\delta_{1}$.
\end{lemma}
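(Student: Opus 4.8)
The plan is to undo the reduction $\theta_{2}$ at the level of solutions. Given $(C_{1},C_{2},C_{3})\in\mathcal{X}_{1}(n,\delta_{1})$, I set $\delta_{2}:=2\delta_{1}$ and apply $\theta_{2}$, obtaining $(C'_{1},C'_{2},C'_{3},t'_{1},t'_{2},t'_{3})\in\mathcal{X}_{2}(n',\delta_{2})$ with $n-104\leq n'\leq n-8$; taking $\delta_{1}$ small enough and $n$ large enough in terms of $\delta_{1}$ makes $\theta_{2}$ well-defined and makes $\delta_{2}$ small enough and $n'$ large enough for a solution of Reduction~\ref{re:2} to exist. Such a solution provides $\alpha'_{i}\in C'_{i}$ (acting on $\{1,\dots,n'\}$) with $\alpha'_{1}\alpha'_{2}=\alpha'_{3}$, together with cycles $\gamma'_{i}$ of length $t'_{i}$ in $\alpha'_{i}$ sharing a value $r$; using the notation of \S\ref{se:notation-other}, write $\alpha'_{i}=(r\ \vec{\rho}_{i})\vec{\beta}_{i}$ with $(r\ \vec{\rho}_{i})=\gamma'_{i}$. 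I would then build $\alpha_{i}\in C_{i}$ on $\{1,\dots,n\}$ out of the $\alpha'_{i}$ by reintroducing, around $r$, the $R_{31}=n-n'$ points $n'+1,\dots,n$ that $\theta_{2}$ removed.

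The only differences between $C_{i}$ and $C'_{i}$ are that the distinguished cycle of $C_{i}$ has length $t(C_{i})=t'_{i}+R_{31}-r_{31}(C_{i})$ instead of $t'_{i}$, and that for $i\in J_{31}$ the class $C_{i}$ carries one extra cycle of length $r_{31}(C_{i})\leq 31$; by Corollary~\ref{co:classbig}\eqref{co:classbig-one} we have $t(C_{i})\geq 1000$ for $n$ large, so there is plenty of room inside $\gamma'_{i}$. All modifications will be performed through moves of the form $(\alpha_{1},\alpha_{2},\alpha_{3})\mapsto(u\alpha_{1}v,\,v^{-1}\alpha_{2}w,\,u\alpha_{3}w)$, which preserve $\alpha_{1}\alpha_{2}=\alpha_{3}$, with $u,v,w$ transpositions or short cycles supported on the reinserted points together with $r$ and a bounded number of its neighbours in the $\gamma'_{i}$. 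Concretely I would: (i) for each $i\in J_{31}$, conjure the missing $r_{31}(C_{i})$-cycle of $\alpha_{i}$ by a middle-multiplication by an $r_{31}(C_{i})$-cycle on fresh points --- which creates that cycle simultaneously in $\alpha_{i}$ and in one of the other two $\alpha_{j}$, the third being untouched --- and then absorb the spurious copy into the big cycle of that $\alpha_{j}$ by one further multiplication, the net effect being to attach the desired short cycle to $\alpha_{i}$ alone while feeding its points into the big cycles of the other two; (ii) insert the remaining fresh points one at a time into $\gamma'_{1},\gamma'_{2},\gamma'_{3}$ until each of these cycles reaches length $t(C_{i})$; (iii) arrange four of the insertions made in (ii) so that $\alpha_{1}$ and $\alpha_{2}$ come to agree on a stretch $r\mapsto p_{1}\mapsto p_{2}\mapsto p_{3}\mapsto p_{4}$, which yields $x_{1}:=r,\dots,x_{5}:=p_{4}$ with $(x_{j})^{\alpha_{1}}=(x_{j})^{\alpha_{2}}=x_{j+1}$ for $1\leq j\leq 4$, as demanded by Reduction~\ref{re:1}. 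At the end, cycle-structure bookkeeping (a class of $\mathrm{Sym}(n)$ only sees cycle types) confirms $\alpha_{i}\in C_{i}$, and the product relation has been maintained throughout.

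The main obstacle I anticipate is routing the reinserted points into exactly the intended cycles. A naive insertion ``right after $r$'' forces, say, $\alpha_{3}(r)$ to change to a value lying in $\gamma'_{1}$ rather than in $\gamma'_{3}$, which can merge $\gamma'_{3}$ with another cycle of $\alpha_{3}$ and so corrupt its cycle structure; one therefore has to choose the $u,v,w$ carefully and to use the extra room deliberately --- which is precisely what the slack ``$+8$'' and ``$+1$ per $j\in J_{31}$'' inside $R_{31}=8+\sum_{j\in J_{31}}(r_{31}(C_{j})+1)$ is for: the ``$+1$''s keep $\nu_{C_{1},C_{2},C_{3}}$ odd while a short cycle is detached in step (i), and the ``$+8$'' supplies enough scratch points to carry out step (iii) and to repair any transient cycle merger without overshooting the target lengths. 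Steps (i) and (iii) are the delicate ones, since a short cycle cannot be grafted onto a single $\alpha_{i}$ in isolation and the common stretch must be created without disturbing the cycle structure of $\alpha_{3}$; once these are handled, the rest is a finite, explicit computation on $O(1)$ points, valid for all $\delta_{1}>0$ small enough and all $n$ large enough depending on $\delta_{1}$, which is exactly a solution for Reduction~\ref{re:1}.
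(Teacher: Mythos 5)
Your proposal is correct and follows essentially the same route as the paper: the proof of Lemma~\ref{le:2to1} given there undoes $\theta_{2}$ precisely by product-preserving multiplications of the kind you describe, attaching each $r_{31}(C_{j})$-cycle together with its companion point around the shared value $r$ for $j\in J_{31}$, and then inserting the last $8$ points in one explicit step that simultaneously creates the common stretch $x_{1}\mapsto\cdots\mapsto x_{5}$ demanded by Reduction~\ref{re:1}. The only difference is that the paper writes down the explicit cycles playing the role of your $u,v,w$ in each of the cases $j=1,2,3$ and in the final $8$-point insertion, i.e.\ it carries out in full the two steps you flag as delicate rather than asserting their feasibility.
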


\begin{proof}
 Let $(C_{1},C_{2},C_{3})\in\mathcal{X}_{1}:=\mathcal{X}_{1}(n,\delta_{1})$, so that $C_{i}\in\mathcal{C}_{n}(\delta_{1})$ for each $i$, and let $\theta_{2}$ be well-defined thanks to our choice of $\delta_{1},n$. Given $(C'_{1},C'_{2},C'_{3},t'_{1},t'_{2},t'_{3})=\theta_{2}(C_{1},C_{2},C_{3})$, let $\alpha'_{1},\alpha'_{2},\alpha'_{3}$ be a corresponding solution for Reduction~\ref{re:2}. Set $J:=J_{31}$ and $r_{j}:=r_{31}(C_{j})$.

To walk back the procedure and obtain the $\alpha_{i}$, we first add the $r_{j}$-cycles. By hypothesis, there is a triple of $t'_{i}$-cycles and there is a value $r$ such that each $t'_{i}$-cycle in that triple is of the form $(r\ \vec{\rho}_{i})$, so write $\alpha_{i}=(r\ \vec{\rho}_{i})\vec{\beta}_{i}$ as in \S\ref{se:notation-other}. Fix one $j\in J$: we introduce $r_{j}+1$ new points $s,x_{1},\ldots,x_{r_{j}}$ by naturally embedding the $\alpha_{i}$ into the pointwise stabilizer $\mathrm{Sym}(n+r_{j}+1)_{(\{s,x_{1},\ldots,x_{r_{j}}\})}$. If $j=1$, take
\begin{align*}
\alpha'_{1} & =(r\ s)(x_{1}\ \cdots\ x_{r_{j}})\alpha_{1}=(x_{1}\ \cdots\ x_{r_{j}})(r\ s\ \vec{\rho}_{1})\vec{\beta}_{1}, \\
\alpha'_{2} & =\alpha_{2}(x_{1}\ \cdots\ x_{r_{j}-1}\ r\ x_{r_{j}}\ s)=(x_{1}\ \cdots\ x_{r_{j}-1}\ r\ \vec{\rho}_{2}\ x_{r_{j}}\ s)\vec{\beta}_{2}, \\
\alpha'_{3} & =\alpha'_{1}\alpha'_{2}=([x_{1}\overset{\text{odds}}{\cdots}x_{r_{j}-1}]\ s\ \vec{\rho}_{3}\ x_{r_{j}}\ [x_{2}\overset{\text{evens}}{\cdots}x_{r_{j}-2}]\ r)\vec{\beta}_{3}
\end{align*}
(see \S\ref{se:notation-other} for the notation above). If $j=2$, take
\begin{align*}
\alpha'_{1} & =(x_{1}\ s\ x_{2}\ \cdots\ x_{r_{j}}\ r)\alpha_{1}=(x_{1}\ s\ x_{2}\ \cdots\ x_{r_{j}}\ \vec{\rho}_{1}\ r)\vec{\beta}_{1}, \\
\alpha'_{2} & =\alpha_{2}(x_{1}\ \cdots\ x_{r_{j}})(r\ s)=(x_{1}\ \cdots\ x_{r_{j}})(s\ r\ \vec{\rho}_{2})\vec{\beta}_{2}, \\
\alpha'_{3} & =\alpha'_{1}\alpha'_{2}=(x_{1}\ r\ [x_{2}\overset{\text{evens}}{\cdots}x_{r_{j}}]\ \vec{\rho}_{3}\ s\ [x_{3}\overset{\text{odds}}{\cdots}x_{r_{j}-1}])\vec{\beta}_{3}.
\end{align*}
If $j=3$, take
\begin{align*}
\alpha'_{1} & =(x_{1}\ s\ x_{2}\ \cdots\ x_{r_{j}}\ r)\alpha_{1}=(x_{1}\ s\ x_{2}\ \cdots\ x_{r_{j}}\ \vec{\rho}_{1}\ r)\vec{\beta}_{1}, \\
\alpha'_{2} & =\alpha_{2}(x_{1}\ \cdots\ x_{r_{j}-1}\ s\ x_{r_{j}}\ r)=(x_{1}\ \cdots\ x_{r_{j}-1}\ s\ x_{r_{j}}\ r\ \vec{\rho}_{2})\vec{\beta}_{2}, \\
\alpha'_{3} & =\alpha'_{1}\alpha'_{2}=([x_{2}\overset{\text{evens}}{\cdots}x_{r_{j}-2}]\ s\ [x_{3}\overset{\text{odds}}{\cdots}x_{r_{j}-1}]\ r)(x_{1}\ x_{r_{j}}\ \vec{\rho}_{3})\vec{\beta}_{3}.
\end{align*}
In all cases $\alpha'_{1}\alpha'_{2}=\alpha'_{3}$, and the triple of cycles containing the $\vec{\rho}_{i}$ has a common value, say $r$ for $j\in\{1,2\}$ and $x_{1}$ for $j=3$, so we can repeat the process for all $j\in J$.

It remains to add the last $8$ points. Embed again the $\alpha_{i}=(r\ \vec{\rho}_{i})\vec{\beta}_{i}$ resulting from the process above into the pointwise stabilizer $\mathrm{Sym}(n+8)_{(\{x_{1},\ldots,x_{8}\})}$, and take
\begin{align*}
\alpha'_{1} & =(x_{1}\ \cdots\ x_{7}\ r\ x_{8})\alpha_{1}=(x_{1}\ \cdots\ x_{7}\ \vec{\rho}_{1}\ r\ x_{8})\vec{\beta}_{1}, \\
\alpha'_{2} & =\alpha_{2}(x_{1}\ \cdots\ x_{6}\ r\ x_{8}\ x_{7})=(x_{1}\ \cdots\ x_{6}\ r\ \vec{\rho}_{2}\ x_{8}\ x_{7})\vec{\beta}_{2}, \\
\alpha'_{3} & =\alpha'_{1}\alpha'_{2}=(x_{1}\ x_{3}\ x_{5}\ r\ x_{7}\ \vec{\rho}_{3}\ x_{8}\ x_{2}\ x_{4}\ x_{6})\vec{\beta}_{3}.
\end{align*}
We have $\alpha'_{1}\alpha'_{2}=\alpha'_{3}$, and the $x_{i}$ satisfy the required property of Reduction~\ref{re:1}.
\end{proof}

\section{Reduction~\ref{re:3}: no short cycles}\label{se:re3}

In the third reduction we fix an ordering of the cycles (thus passing to string triples for the first time) and eliminate all the cycles of short length. We do so step by step.

First, for $C\in\mathcal{C}_{n}$ and $t\in\mathbb{N}_{>31}$ such that $C$ has a $t$-cycle, we order the cycles of $C$ in the following way: if $C$ has $k$ cycles and $\ell_{j}$ is the length of the $j$-th cycle, we set $\ell_{1}=t$ and put the others in decreasing order\footnote{There are two reasons for putting $t$ first: it allows to recover $t$ from reading the string, which makes $\theta_{3}$ injective in Lemma~\ref{le:theta3}, and it allows the specified triple of $t_{i}$-cycles to share a value, since Reduction~\ref{re:3} will require the triple to satisfy $\mathfrak{C}_{1}$. Putting the other cycles in decreasing order is an arbitrary choice, although we do need to make a choice to univocally define $\theta_{3}$.}, i.e.\ $\ell_{j}\geq\ell_{j+1}$ for $2\leq j<k$. Now let $h$ be the largest index for which $\ell_{h}>31$: it exists since $t>31$, and $\ell_{h'}>31$ if and only if $h'\leq h$ by construction. For $n':=\sum_{h'\leq h}\ell_{h'}\leq n$, we create a class string $\phi_{0}\in\Phi_{n'}$ given by
\begin{align*}
\phi_{0}\left(\frac{1}{2}\right) & =\phi_{0}\left(\ell_{1}+\frac{1}{2}\right)=\phi_{0}\left(\ell_{1}+\ell_{2}+\frac{1}{2}\right)=\ldots=\phi_{0}\left(n'+\frac{1}{2}\right)=\blacksquare
\end{align*}
and by $\phi_{0}\left(x+\frac{1}{2}\right)=\square$ for all other $x$. If we have a triple $(C_{1},C_{2},C_{3})\in\mathcal{X}_{2}(n,\delta_{2})$ and appropriate values $t_{i}$, we do so three times and create three initial strings $\phi_{i,0}$: the three values of $n'$ are possibly distinct, but for $\delta_{2}$ small and $n$ large we have the uniform bound $n'\geq\frac{99}{100}n$ by Corollary~\ref{co:classbig}\eqref{co:classbig-fewsmall}.

In the creation of the initial strings so far the short cycles have been ignored, but not eliminated in any meaningful way. For $i\in\{1,2,3\}$, let $S_{i}$ be the set of cycles of $C_{i}$ of length $\leq 31$, say intended as the multiset of lengths $\ell_{h'}$ with $h'>h$. The elimination will proceed in steps, in which we manipulate the strings and also create a first label function $\lambda:\tilde{P}_{n'}\rightarrow\{\emptyset\}\cup\mathcal{N}$ (for some value of $n'$): if $P_{31}^{<\infty}$ is the set of all finite sequences taking integer values between $1$ and $31$, we take $\mathcal{N}:=\{1,2,3\}\times P_{31}^{<\infty}$ as the set of \textit{nesting labels}.

The setup is as follows. We work with a pair of counters $(i,j)$, with $i\in\{1,2,3\}$ and $j$ uniformly bounded in some way: $j\leq\max_{i}\sum_{s\in S_{i}}s\leq\frac{1}{100}n$ will be enough by Corollary~\ref{co:classbig}\eqref{co:classbig-fewsmall}. For each $(i,j)$, at the end of the $(i,j)$-th step we have at hand three strings $\phi_{k,(i,j)}$ ($k\in\{1,2,3\}$), three sets $S_{k,(i,j)}\subseteq S_{k}$, and some $r_{(i,j)}\in\tilde{P}_{n}$ such that $\lambda$ is defined for all half-positions $\leq r_{(i,j)}$. For brevity, call $C_{k,(i,j)}$ the class given by $\mu(\phi_{k,(i,j)})\times S_{k,(i,j)}$: here $\mu$ is the natural function of \S\ref{se:notation} sending strings to classes, to which we attach one short cycle of length $s$ for each $s\in S_{k,(i,j)}$. We introduce an ordering ``$\leq$'', with $(i_{1},j_{1})\leq(i_{2},j_{2})$ if either $i_{1}<i_{2}$, or $i_{1}=i_{2}$ and $j_{1}\leq j_{2}$. Before we start, we define $S_{k,(1,0)}:=S_{k}$, $\phi_{k,(1,0)}:=\phi_{k,0}$, $r_{(1,0)}:=\frac{1}{2}$, and $\lambda\left(\frac{1}{2}\right):=\emptyset$, which in particular implies that $C_{k,(1,0)}=C_{k}$.

We ensure that the following properties are satisfied throughout the whole process:
\begin{enumerate}[(a)]
\item\label{eq:re3-s} if $(i_{1},j_{1})\leq(i_{2},j_{2})$ then $S_{k,(i_{1},j_{1})}\supseteq S_{k,(i_{2},j_{2})}$ for all $k$;
\item\label{eq:re3-r} if $(i_{1},j_{1})\leq(i_{2},j_{2})$ then $r_{(i_{1},j_{1})}\leq r_{(i_{2},j_{2})}$;
\item\label{eq:re3-t} $S_{k,(i,j)}$ has only cycles of length $\leq 31$, and an even number of cycles of even length, for all $i,j,k$;
\item\label{eq:re3-nu} $\nu_{C_{1,(i,j)},C_{2,(i,j)},C_{3,(i,j)}}$ is odd for all $i,j$;
\item\label{eq:re3-n} $\phi_{k,(i,j)}$ has length $\geq\frac{97}{100}n$ for all $i,j,k$;
\item\label{eq:re3-existsa} there is some $a_{(i,j)}\in\tilde{P}_{n}$ with $\max\{r_{(i,j)},\frac{i}{10}n+10\}\leq a_{(i,j)}\leq\frac{i+1}{10}n-70$ such that $\phi_{k,(i,j)}(a_{(i,j)}+r)=\square$ for all $k$ and all integers\footnote{We need at least $60$ consecutive $\square$ to make sure that we can remove some $T\neq\emptyset$ below: at worst, $T$ is made of a pair of $30$-cycles. Then, the extra $10$ places on either side contribute to the injectivity of the maps $\theta_{k}$ because, for every reduction $\theta_{k}:\mathcal{X}_{k-1}\rightarrow\mathcal{X}_{k}$ (which transforms $\lambda$ into some $\lambda'$) and every $a\in\tilde{P}_{n}$ that is affected in some way by that reduction, we must have $\lambda(a)=\emptyset$.} $-10\leq r\leq 70$.
\end{enumerate}
The properties hold for $(i,j)=(1,0)$. In fact, the only one that is not obvious and has not already been proved before is \eqref{eq:re3-existsa}, and if it did not hold then there would be $\geq\left\lfloor\frac{1}{810}n\right\rfloor-1$ half-positions $a$ with $\phi_{k,(1,0)}(a)=\blacksquare$ for at least one $k$, which would mean that at least one $C_{k}$ has $\geq\frac{1}{3}\left(\left\lfloor\frac{1}{810}n\right\rfloor-1\right)$ cycles, contradicting Proposition~\ref{pr:gmcycles}\eqref{pr:gmcycles-1}.

The process starts at the $(1,1)$-th step. For given $i$, whenever we reach $j$ such that $S_{i,(i,j)}=\emptyset$, we jump to the next value of the counter $i$: in other words, we set $S_{k,(i+1,0)}:=S_{k,(i,j)}$, $\phi_{k,(i+1,0)}:=\phi_{k,(i,j)}$, $r_{(i+1,0)}:=r_{(i,j)}$ and continue the process from the $(i+1,1)$-th step. By \eqref{eq:re3-s}, if $S_{k,(i,j)}=\emptyset$ then $S_{k,(i',j')}=\emptyset$ for all $(i',j')\geq(i,j)$; once we reach $S_{1,(3,j)}=S_{2,(3,j)}=S_{3,(3,j)}=\emptyset$ for some $j$, the process ends.

Finally, we explain what happens at the $(i,j)$-th step, assuming that $S_{i,(i,j-1)}\neq\emptyset$. Take the smallest $a_{(i,j-1)}$ satisfying \eqref{eq:re3-existsa}. Consider all subsets $T\subseteq S_{i,(i,j-1)}$ such that the number of even-length cycles in $T$ is even and the sum of the lengths of all cycles of $T$ is $\leq 60$: since \eqref{eq:re3-t} holds, there exists at least one such $T$. Order all such $T$ lexicographically by listing the cycle lengths\footnote{This is an arbitrary choice, although we do need to make a choice to univocally define $\theta_{3}$.}, and take the first one. Denote this $T$ by $T_{(i,j)}$; we can naturally see $T_{(i,j)}$ as an element of $P_{31}^{<\infty}$ as well. Define $t_{(i,j)}:=\sum_{t\in T_{(i,j)}}t$. Then set $\phi_{i,(i,j)}:=\phi_{i,(i,j-1)}$, and for $k\neq i$ if we have $\phi_{k,(i,j-1)}:\tilde{P}_{n'}\rightarrow\mathcal{B}$ then let
\begin{align*}
\phi_{k,(i,j)} & :\tilde{P}_{n'-t_{(i,j)}}\rightarrow\mathcal{B}, & \phi_{k,(i,j)}(r) & =\begin{cases} \phi_{k,(i,j)}(r) & (r\leq a_{(i,j-1)}), \\ \phi_{k,(i,j)}(r+t_{(i,j)}) & (r>a_{(i,j-1)}). \end{cases}
\end{align*}
In other words, we shrink by $t_{(i,j)}$ positions the length of the cycles of the two non-$i$-th strings in correspondence of the half-position $a_{(i,j-1)}$, where by \eqref{eq:re3-existsa} we know that there are enough $\square$ to do so. Correspondingly, we set $S_{i,(i,j)}:=S_{i,(i,j-1)}\setminus T_{(i,j)}$, removing the cycles of $T_{(i,j)}$ from the $i$-th class, which also account for $t_{(i,j)}$ positions in total. For $k\neq i$, we just define $S_{k,(i,j)}:=S_{k,(i,j-1)}$. Lastly, as for the labelling, we set $r_{(i,j)}:=a_{(i,j-1)}$: if $r_{(i,j)}>r_{(i,j-1)}$ then define $\lambda(r):=\emptyset$ for all $r_{(i,j-1)}<r<r_{(i,j)}$ and $\lambda(r_{(i,j)}):=(i,T_{(i,j)})$; if $r_{(i,j)}=r_{(i,j-1)}$, so that we already have $\lambda(r_{(i,j)})=(i,T')$ for a previous $T'$, redefine it to be $(i,T'\oplus T_{(i,j)})$ (``$\oplus$'' refers to concatenating the two finite sequences).

A visual representation of the $(i,j)$-th step is given below, based on \eqref{eq:stringex}. For visual simplicity, we assume $i=1$, $r_{(i,j-1)}=a_{(i,j-1)}-11$, and $n'+\frac{1}{2}=a_{(i,j-1)}+72$ for all three strings. Call $a:=a_{(i,j-1)}$ and $r:=r_{(i,j-1)}$, take $T_{(i,j)}=\{\text{$4$-cycle, $4$-cycle}\}$, and depict the strings as
\begin{align*}
\begin{aligned}
\overbrace{\hphantom{mmmmmmmmmmmmmmmmmm}}^{\text{only $\square$ in all strings}} \hphantom{mmmmmm} \\
\begin{aligned}
\tilde{P}_{n'}: \,\cdots & & {\scriptstyle r-1 } & & {\scriptstyle r } & & {\scriptstyle a-10 } & & \cdots & & {\scriptstyle a } & & \cdots & & {\scriptstyle a+9 } & & \cdots & & {\scriptstyle a+70 } & & {\scriptstyle a+71 } & & {\scriptstyle a+72 } \\
\phi_{1,(1,j-1)}: \,\cdots & & \blacksquare & & \square & & \square & & \square & & \square & & \square & & \square & & \square & & \square & & \square & & \blacksquare \\
\phi_{2,(1,j-1)}: \,\cdots & & \square & & \square & & \square & & \square & & \square & & \square & & \square & & \square & & \square & & \square & & \blacksquare \\
\phi_{3,(1,j-1)}: \,\cdots & & \blacksquare & & \square & & \square & & \square & & \square & & \square & & \square & & \square & & \square & & \blacksquare & & \blacksquare \\
\lambda: \,\cdots & & (\lambda) & & (\lambda) & &  & &  & &  & & [\ \ ]\!  & &  & &  & &  & &  & &  \\
\end{aligned}
\end{aligned}
\end{align*}
where ``$(\lambda)$'' indicates values of $\lambda$ that were already defined in previous steps, and ``$[\ \ ]$'' points out which half-positions will disappear at this step in the second and third string: there are $8$ of them since $t_{(i,j)}=8$. Then we obtain
\begin{align*}
\begin{aligned}
\tilde{P}_{n'}: \,\cdots & & {\scriptstyle r-1 } & & {\scriptstyle r } & & {\scriptstyle a-10 } & & \cdots & & {\scriptstyle a } & & {\scriptstyle a+1 } & & \cdots & & {\scriptstyle a+62 } & & {\scriptstyle a+63 } & & {\scriptstyle a+64 } & & \cdots & & {\scriptstyle a+72 } \\
\phi_{1,(1,j)}: \,\cdots & & \blacksquare & & \square & & \square & & \square & & \square & & \square & & \square & & \square & & \square & & \square & & \cdots & & \blacksquare \\
\phi_{2,(1,j)}: \,\cdots & & \square & & \square & & \square & & \square & & \square & & \square & & \square & & \square & & \square & & \blacksquare & & & & \\
\phi_{3,(1,j)}: \,\cdots & & \blacksquare & & \square & & \square & & \square & & \square & & \square & & \square & & \square & & \blacksquare & & \blacksquare & & & & \\
\lambda: \,\cdots & & (\lambda) & & (\lambda) & & \emptyset & & \emptyset & & x & &  & &  & &  & &  & &  \\
\end{aligned}
\end{align*}
with $x=(1,(4,4))$. If we started with $a_{(i,j-1)}=r_{(i,j-1)}$ and for instance we already had $\lambda(a_{(i,j-1)})=(1,(30,30,1))$ from the previous step, then $x=(1,(30,30,1,4,4))$.

By definition we have $S_{i,(i,j)}\subsetneq S_{i,(i,j-1)}$, so as discussed before the process terminates, as long as the properties \eqref{eq:re3-s}--\eqref{eq:re3-existsa} are preserved throughout. Given our definitions we have \eqref{eq:re3-s} immediately, \eqref{eq:re3-r} by the fact that \eqref{eq:re3-existsa} holds at the previous step, and \eqref{eq:re3-t} by the choice of $T_{(i,j)}$. Whether the $i$-th class is in the corresponding alternating group is not changed by the removal of $T_{(i,j)}$, whereas the other two can only change together, since we remove the same number of points from a single cycle of both classes. Therefore, \eqref{eq:re3-nu} at the $(i,j)$-th step follows from \eqref{eq:re3-nu} at the previous step. Corollary~\ref{co:classbig}\eqref{co:classbig-fewsmall} yields $\sum_{i}\sum_{s\in S_{i}}s\leq\frac{3}{100}n$, giving \eqref{eq:re3-n} at all steps. Combining \eqref{eq:re3-n} with Proposition~\ref{pr:gmcycles}\eqref{pr:gmcycles-1} and Corollary~\ref{co:classbig}\eqref{co:classbig-cut}, and arguing as we did for the $(1,0)$-th step, we obtain \eqref{eq:re3-existsa} at all steps. Thus, the process works as intended.

Denote by $\phi_{k}$ the strings $\phi_{k,(3,j)}$ obtained at the last step of the process, and by $C'_{k}$ the corresponding classes $\mu(\phi_{k})$. The final $n'$ is the same for all three strings, namely
\begin{equation}\label{eq:np3}
n':=n-\sum_{i=1}^{3}\sum_{j=1}^{31}jn_{j}(C_{i})\geq\frac{97n}{100}
\end{equation}
by \eqref{eq:re3-n}. Set $\lambda(r):=\emptyset$ for all the remaining values $r_{(3,j)}<r\leq n'+\frac{1}{2}$.

Now that we have $\phi_{i},C'_{i},\lambda$, we can define $\theta_{3}$. Recall $\mathfrak{P}(\text{labels})$ from \eqref{eq:prop-labels}; we shall start defining some more properties. If $c^{\blacksquare}(a)$ is as in \eqref{eq:functbcount}, write
\begin{align}
\mathfrak{P}(\blacksquare,k): & \begin{cases}&\text{for all $i$, for all $a$ s.t.\ $\phi_{i}(a)=\blacksquare$,} \\ &\text{for all $1\leq|r|\leq k$ we have $\phi_{i}(a+r)=\square$,}\end{cases} \label{eq:prop-cycle} \\
\mathfrak{P}(\mathcal{N},k): & \begin{cases}&\text{for all $a$ s.t.\ $\lambda(a)\in\mathcal{N}$,} \\ &\text{for all $0\leq|r|\leq k$ we have $c^{\blacksquare}(a+r)=0$.}\end{cases} \label{eq:prop-n}
\end{align}
The two properties above represent respectively having no short cycles for any string and having nesting labels at large distance from any $\blacksquare$.

\begin{lemma}\label{le:theta3}
Let
\begin{align*}
\mathcal{X}_{3}(m,\delta):= \ & \left\{\, (\phi_{1},\phi_{2},\phi_{3},\lambda)\in\Phi_{m}^{\times 3}\times\Lambda_{m,\{\emptyset\}\cup\mathcal{N}}\ \left| \  \right.\right. \\
 & \left. \text{if $C'_{i}:=\mu(\phi_{i})$ then $C'_{i}\in\mathcal{C}_{m}(\delta)$, $\nu_{C'_{1},C'_{2},C'_{3}}$ odd;} \right. \\
 & \left. \text{$\mathfrak{P}(\mathrm{labels})$, $\mathfrak{P}(\blacksquare,31)$, $\mathfrak{P}(\mathcal{N},10)$} \,\right\},
\end{align*}
using the definitions in \eqref{eq:cdelta}--\eqref{eq:nu}--\eqref{eq:prop-labels}--\eqref{eq:prop-cycle}--\eqref{eq:prop-n}, and let
\begin{align*}
\theta_{3} & :\mathcal{X}_{2}(n,\delta_{2})\rightarrow\!\!\!\!\bigcup_{n'=\left\lceil\frac{97n}{100}\right\rceil}^{n}\!\!\!\!\mathcal{X}_{3}(n',2\delta_{2}), & \theta_{3}(C_{1},C_{2},C_{3},t_{1},t_{2},t_{3})=(\phi_{1},\phi_{2},\phi_{3},\lambda),
\end{align*}
following the construction described above.

Then, for all $\delta_{2}>0$ small enough and all $n$ large enough depending on $\delta_{2}$, $\theta_{3}$ is a well-defined function (i.e., for any sextuple in $\mathcal{X}_{2}(n,\delta_{2})$, its image is a uniquely constructed element contained in one of the $\mathcal{X}_{3}(n',2\delta_{2})$ in the union above) and is injective.
\end{lemma}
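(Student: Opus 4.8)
The plan is to verify two things: that $\theta_3$ lands in the claimed union of sets $\mathcal{X}_3(n',2\delta_2)$, and that it is injective. Both are essentially bookkeeping about the construction already carried out above, so I would organize the proof around the list of invariants \eqref{eq:re3-s}--\eqref{eq:re3-existsa} and the defining properties of $\mathcal{X}_3$.

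\textbf{Well-definedness.} First I would note that for $\delta_2$ small and $n$ large, everything in the construction makes sense: $t_i>31$ guarantees the initial strings $\phi_{i,0}$ exist; Corollary~\ref{co:classbig}\eqref{co:classbig-fewsmall} gives $n'\geq\frac{97}{100}n$ (invariant \eqref{eq:re3-n}), which together with \eqref{eq:np3} shows the final length lies in $\left[\left\lceil\frac{97n}{100}\right\rceil,n\right]$; and the process terminates because $S_{i,(i,j)}\subsetneq S_{i,(i,j-1)}$ strictly at each step while the counter $j$ is bounded by $\frac{1}{100}n$. Then I would check each membership condition for $\mathcal{X}_3(n',2\delta_2)$ in turn. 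The bound $C'_i\in\mathcal{C}_{n'}(2\delta_2)$ follows from Corollary~\ref{co:classbig}\eqref{co:classbig-cut}: the total number of points removed from $C_i$ is $\sum_j j\,n_j(C_i)\leq\frac{1}{100}n\leq\frac{1}{10}n$, so $C'_i\in\mathcal{C}_{n'}(2\delta_2)$ (using that $\mathcal{C}(\cdot)$ and $\mathcal{A}(\cdot)$ match via $\nu$ odd, and that the statement of the corollary is phrased for $\mathrm{Alt}$ but transfers). The condition "$\nu_{C'_1,C'_2,C'_3}$ odd" is invariant \eqref{eq:re3-nu}. Property $\mathfrak{P}(\mathrm{labels})$ holds because the only labels created are nesting labels in $\mathcal{N}$, and they are placed at positions $r_{(i,j)}=a_{(i,j-1)}$ which by \eqref{eq:re3-existsa} have $\phi_k(a_{(i,j-1)}+r)=\square$ for $-10\leq r\leq 70$ and in particular $c^{\blacksquare}(a_{(i,j-1)})=0$; this simultaneously gives $\mathfrak{P}(\mathcal{N},10)$. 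Property $\mathfrak{P}(\blacksquare,31)$ is exactly the statement that no string has a cycle of length $\leq 31$ at the end, which is what the elimination process achieves: each $\phi_{i,0}$ was built from cycles of length $>31$, and subsequent steps only shorten cycles of the two non-$i$-th strings by shaving off $\square$'s from a run of $\geq 60$ consecutive $\square$'s (so a cycle being shortened has length $>31$ before and after, since at most $60$ points are removed from a cycle originally of length... — here I need to be slightly careful and argue that we never shrink a cycle below length $33$; this follows because the removal happens inside a block of $60$ consecutive $\square$'s guaranteed by \eqref{eq:re3-existsa}, and the two breaks bounding that cycle are outside the $[a-10,a+70]$ window).

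\textbf{Injectivity.} The key observation is that the construction of $\theta_3$ is deterministic given the input, and conversely the input can be read off from the output. From $(\phi_1,\phi_2,\phi_3,\lambda)\in\mathcal{X}_3(n',2\delta_2)$ I would recover: the values $t_i$ as the length of the \emph{first} cycle of $\phi_i$ (this is why $t_i$ was placed first — see the footnote); the short-cycle multisets $S_i$ by collecting, over all $a$ with $\lambda(a)=(i,T)\in\mathcal{N}$, the concatenated sequences $T$, sorted back into the multiset of cycle lengths of $C_i$ that are $\leq 31$; and the long-cycle structure of each $C_i$ from $\phi_i$ directly (the cycles of $\mu(\phi_i)$ are precisely the cycles of $C_i$ of length $>31$, and their lengths are unchanged — this needs the fact, which I'd state, that the shortening steps only ever touch positions that end up inside a $\square$-run, never changing a break, so $\phi_i=\phi_{i,0}$ shifted, hence $\mu(\phi_i)$ has the same long-cycle lengths as $C_i$; wait — actually the non-$i$-th strings \emph{are} shortened, so I must argue that shortening a cycle by removing interior $\square$'s changes its \emph{length} but this length is then reconstructed by \emph{adding back} the removed amount, which is recorded in the nesting labels). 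Concretely: to recover the original length of the $m$-th cycle of $C_k$ I take the current length in $\phi_k$ and add $\sum t_{(i,j)}$ over all shortening steps whose half-position $a_{(i,j-1)}$ fell inside that cycle; the data of which $t_{(i,j)}$ and where is fully encoded in $\lambda$. Since distinct inputs in $\mathcal{X}_2$ differ in at least one $t_i$ or in the cycle structure of some $C_i$, and each of these is recoverable from the output, $\theta_3$ is injective.

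\textbf{Expected main obstacle.} The genuinely fiddly part is the injectivity argument, specifically showing the inverse is well-defined: I need that the positions $a_{(i,j-1)}$ of the shortening steps, together with the amounts $t_{(i,j)}$, are unambiguously reconstructible from $\lambda$, and that the assignment of a label to "the cycle it sits inside" is unambiguous. This rests on the spacing guarantees in \eqref{eq:re3-existsa} (the $\frac{i}{10}n$ ranges keep steps with different $i$ well-separated, and $\mathfrak{P}(\mathcal{N},10)$ plus the $\pm 70$ window keep labels away from breaks) and on the fact that the construction always picks the \emph{smallest} valid $a$ and the lexicographically \emph{first} valid $T$ — so there is no choice to reverse. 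I would spell out that, reading $\lambda$ left to right, the labels partition into groups by their first coordinate $i$ and within each group the sequence of $T$'s (split at the positions where $\lambda$ changes half-position) reproduces exactly the sequence $T_{(i,1)},T_{(i,2)},\dots$ of removed subsets; everything else then follows. The other steps are routine invariant-checking already essentially done in the text preceding the lemma.
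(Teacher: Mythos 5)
Your proof follows the same route as the paper's: well-definedness is checked invariant by invariant (length bound, Corollary~\ref{co:classbig}\eqref{co:classbig-cut} for the class size, preservation of $\nu$ odd, and the label/spacing properties), and injectivity is proved by exhibiting the explicit unravelling that reads the labels from right to left and reinserts the recorded cycle lengths. The injectivity half is fine and matches the paper (one small remark: determinism of the choices of $a$ and $T$ is what makes $\theta_{3}$ a \emph{function}; injectivity only needs that the output encodes the input, which your unravelling shows).

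The one place where you go beyond the paper is also the one place where your argument does not close. You correctly identify that $\mathfrak{P}(\blacksquare,31)$ requires showing that no cycle of the final strings has length $\leq 31$, and you claim this follows because the breaks bounding a shortened cycle lie outside the window $[a-10,a+70]$ of \eqref{eq:re3-existsa}. But that window only forces the cycle to have length at least $82$ \emph{before} the removal, and a single step may remove up to $t_{(i,j)}=60$ positions (a pair of $30$-cycles), leaving a cycle of length as small as $22$ --- well below the $32$ that $\mathfrak{P}(\blacksquare,31)$ demands and below the $33$ you assert. Nothing in \eqref{eq:re3-existsa} prevents the chosen $a$ from sitting in a cycle of length exactly $82$, so the bound you would need does not follow from the stated window size; one would need either a wider window (roughly $31$ clear half-positions on each side of the removed block) or a separate argument that shortened cycles remain long. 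To be fair, the paper itself dismisses this point with ``the construction yields $\mathfrak{P}(\blacksquare,31)$'' and supplies no more detail than you do, so the gap is inherited rather than introduced; but as written your justification is arithmetically insufficient, and since $\mathfrak{P}(\blacksquare,31)$ is used repeatedly in Reductions~\ref{re:4}--\ref{re:6}, it is not a point that can be waved through.
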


\begin{proof}
For $\delta_{2}$ small and $n$ large, the process described above makes sense, and the image of a sextuple of $\mathcal{X}_{2}(n,\delta_{2})$ through $\theta_{3}$ is a unique string triple in $\mathfrak{S}$.

The resulting strings have length bounded from below by \eqref{eq:np3}. Each $C'_{i}$ can be obtained by shortening or eliminating cycles from $C_{i}$, so by \eqref{eq:np3} and Corollary~\ref{co:classbig}\eqref{co:classbig-cut} if $C_{i}\in\mathcal{C}_{n}(\delta_{2})$ then also $C'_{i}\in\mathcal{C}_{n'}(2\delta_{2})$. The process preserves the fact that $\nu$ is odd at every step, so the final $\nu_{C'_{1},C'_{2},C'_{3}}$ is odd, and we picked the values $r_{(i,j)}$ inside disjoint intervals for different indices $i$, so $\lambda(\tilde{P}_{n'})\subseteq\{\emptyset\}\cup\mathcal{N}$. The construction yields $\mathfrak{P}(\mathrm{labels})$ (since $c^{\blacksquare}(a)=0$ whenever $\lambda(a)\in\mathcal{N}$), $\mathfrak{P}(\blacksquare,31)$, and $\mathfrak{P}(\mathcal{N},10)$. Hence, the image of $\theta_{3}$ is indeed contained in the union of the $\mathcal{X}_{3}(n',2\delta_{2})$.

To show injectivity, we just need to observe that the resulting string triple can be unravelled to recover the original sextuple. Starting with $S_{1}=S_{2}=S_{3}=\emptyset$, we read off each $\lambda(a)\neq\emptyset$ starting from the largest such $a\in\tilde{P}_{n'}$, and if $\lambda(a)=(i_{1},T)$ we add $T$ to $S_{i_{1}}$ and lengthen $\phi_{i_{2}},\phi_{i_{3}}$ by inserting $\ell$ occurrences of the value $\square$ between $a$ and $a+1$ where $\ell=\sum_{s\in T}s$. Since we start from the largest $a$, inserting values does not disrupt the labelling $\lambda$ of the smaller half-positions $a$. After exhausting all $a$, we have recovered the original $\phi_{i,(1,0)}$ and $S_{i,(1,0)}$: thus,
\begin{align*}
C_{i} & =\mu(\phi_{i,(1,0)})\times S_{i,(1,0)}, & t_{i} & =\min\left\{r-\frac{1}{2}\,\left|\,r\in\tilde{P}_{n}\setminus\left\{\frac{1}{2}\right\},\phi_{i,(1,0)}(r)=\blacksquare\right.\right\},
\end{align*}
proving that $\theta_{3}$ is injective.
\end{proof}

\begin{reduction}\label{re:3}
Find, for any $(\phi_{1},\phi_{2},\phi_{3},\lambda)\in\mathcal{X}_{3}(n,\delta_{3})$, a solution septuple $(\eta_{1},\eta_{2},\eta_{3},\phi_{1},\phi_{2},\phi_{3},\lambda)$ such that $\alpha_{1}\alpha_{2}=\alpha_{3}$ for $\alpha_{i}=\mu(\eta_{i},\phi_{i})$. Furthermore, the solution must satisfy the following property: $\mathfrak{C}_{1}$ holds for any triple of cycles sharing $\geq 2$ common positions.
\end{reduction}

\begin{lemma}\label{le:3to2}
If there is a solution for Reduction~\ref{re:3}, for all $\delta_{3}>0$ small enough and all $n$ large enough depending on $\delta_{3}$, then there is a solution for Reduction~\ref{re:2}, for all $\delta_{2}>0$ small enough and all $n$ large enough depending on $\delta_{2}$.
\end{lemma}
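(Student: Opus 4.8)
The plan is to run the construction behind $\theta_3$ in reverse. First I would fix $\delta_2>0$ small and $n$ large enough that $\theta_3$ is well-defined (Lemma~\ref{le:theta3}) and that the assumed solution of Reduction~\ref{re:3} is available with $\delta_3=2\delta_2$; the latter is harmless since the output lengths satisfy $n'\geq\frac{97}{100}n$ by \eqref{eq:np3}. Given $(C_1,C_2,C_3,t_1,t_2,t_3)\in\mathcal{X}_2(n,\delta_2)$, set $(\phi_1,\phi_2,\phi_3,\lambda):=\theta_3(C_1,C_2,C_3,t_1,t_2,t_3)\in\mathcal{X}_3(n',2\delta_2)$ and let $(\eta_1,\eta_2,\eta_3,\phi_1,\phi_2,\phi_3,\lambda)$ be a solution of Reduction~\ref{re:3} for it, so that $\alpha_i':=\mu(\eta_i,\phi_i)$ satisfy $\alpha_1'\alpha_2'=\alpha_3'$ and $\mathfrak{C}_1$ holds for every triple of cycles sharing at least two common positions. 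The remaining task is to re-insert into this solution the short cycles that $\theta_3$ stripped away, keeping the product $\alpha_1'\alpha_2'=\alpha_3'$ intact.

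I would do this one nesting label at a time. Take a half-position $a$ with $\lambda(a)=(i',T)\in\mathcal{N}$. By $\mathfrak{P}(\mathcal{N},10)$ (see \eqref{eq:prop-n}) we have $c^{\blacksquare}(a+r)=0$ for all integers $|r|\leq 10$, so for each $k$ the cycle $\gamma_k$ of $(\eta_k,\phi_k)$ through $a$ runs uninterrupted across that range; hence $\gamma_1,\gamma_2,\gamma_3$ share a large block of common positions, in particular at least two, and $\mathfrak{C}_1$ then supplies a value $z$ lying in all three of $\gamma_1,\gamma_2,\gamma_3$. Writing $\alpha_k'=(z\ \vec{\rho}_k)\vec{\beta}_k$ for each $k$ (up to cycling around), I would, using a fresh pool of points reserved for this label, apply reinsertion gadgets in the spirit of the proof of Lemma~\ref{le:2to1}: treating each odd-length cycle of $T$ on its own and each \emph{pair} of even-length cycles of $T$ together — a pairing that exists exactly because $T$ contains an even number of even-length cycles by \eqref{eq:re3-t} — and for each such unit left-multiplying $\alpha_1',\alpha_3'$ by one explicit permutation and right-multiplying $\alpha_2',\alpha_3'$ by another, both supported on $z$ and the fresh points, chosen so that $\alpha_1'\alpha_2'=\alpha_3'$ survives, the $i'$-th element gains exactly that unit's cycles as new disjoint cycles, and each of the other two elements gains that unit's total length of fresh points inside its $z$-cycle. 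The parity bookkeeping closes because deleting $T$ from the $i'$-th class and shortening one cycle of each of the other two classes by $\sum_{s\in T}s$ kept $\nu$ odd (\eqref{eq:re3-nu}). Running this through every nesting label, with pairwise disjoint pools of fresh points so that the gadgets may be applied in turn without interference, and exhausting $\lambda$, produces $\alpha_1,\alpha_2,\alpha_3\in\mathrm{Sym}(n)$ with $\alpha_1\alpha_2=\alpha_3$; tracking the cycle changes, each $\alpha_k$ ends up with the cycles of $\mu(\phi_k)$, every cycle that $\theta_3$ had shortened restored to full length and the short cycles of $C_k$ adjoined — that is, $\alpha_k\in C_k$.

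It then remains to exhibit the triple of cycles of lengths $t_1,t_2,t_3$ sharing a value, as Reduction~\ref{re:2} demands. Since every string in $\mathcal{X}_3$ has all cycles of length $>31$ ($\mathfrak{P}(\blacksquare,31)$, see \eqref{eq:prop-cycle}), and every class string begins with a break, the first cycles of $\phi_1,\phi_2,\phi_3$ — which $\theta_3$ places first and which carry the ($\theta_3$-shortened image of the) $t_i$-cycle of $C_i$ — share at least the positions $1,\dots,32$, in particular at least two, so $\mathfrak{C}_1$ gives a value $r$ occurring in the first cycle of every $\alpha_i'$. The reinsertion gadgets never remove a value from a cycle, and, being the inverse of $\theta_3$, they lengthen the first cycle of each $\alpha_k'$ back to length exactly $t_k$; hence $\alpha_1,\alpha_2,\alpha_3$ contain cycles of lengths $t_1,t_2,t_3$ all containing $r$, and therefore $(\alpha_1,\alpha_2,\alpha_3)$ is a solution of Reduction~\ref{re:2}.

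The step I expect to be the main obstacle is the explicit design of the reinsertion gadgets — writing down, for an arbitrary admissible unit (a single odd cycle, or a pair of even cycles), the two permutations that perform the required cycle surgery while keeping the product fixed, generalizing the permutation identities already displayed in the proof of Lemma~\ref{le:2to1}. Once those are in hand, the rest (disjointness of the fresh pools, the sign count, and the recovery of the $t_i$-cycle property) is routine bookkeeping.
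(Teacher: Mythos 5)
Your overall strategy coincides with the paper's: invert $\theta_{3}$ label by label, use $\mathfrak{P}(\mathcal{N},10)$ plus the $\mathfrak{C}_{1}$ condition of Reduction~\ref{re:3} to locate a value $z$ common to the three cycles through each nesting label, split $T$ into single odd cycles and pairs of even cycles, and reinsert each unit with a gadget supported on $z$ and fresh points. The framework, the parity observation, and the final recovery of the shared-value property for the $t_{i}$-cycles (the first cycles of the strings, which share $>31$ positions and whose common value is never destroyed since the gadgets only insert points) are all correct and match the paper.

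However, the step you defer --- ``the explicit design of the reinsertion gadgets'' --- is not routine bookkeeping; it is the entire substance of the proof, and its existence is exactly what the lemma must establish. The paper writes out six gadgets (three choices of the receiving index $i$, times the two unit types), and at least one of them requires a genuine idea: when $i=3$ and the unit is a pair of even cycles of lengths $e,f$, one must lengthen the $z$-cycles of $\alpha_{1}$ and $\alpha_{2}$ by $e+f$ points each in such a way that the \emph{product} $\alpha'_{1}\alpha'_{2}$ splits off precisely one $e$-cycle and one $f$-cycle disjoint from $\vec{\rho}_{3}$; the paper achieves this with a carefully interleaved insertion (the sequences $\vec{\tau},\vec{\upsilon}$ built from $y_{3},z_{2},y_{4},z_{3},\dots$ and $z_{g+1},z_{e+1},\dots$), and there is no generic reason such an arrangement exists without exhibiting it. A smaller imprecision: the multiplication pattern is not uniformly ``left-multiply $\alpha_{1},\alpha_{3}$ and right-multiply $\alpha_{2},\alpha_{3}$'' --- in the paper's $i=3$ cases one right-multiplies $\alpha_{1}$ and left-multiplies $\alpha_{2}$ --- so even the shape of the gadgets depends on $i$. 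Until these six identities are written down and verified, the proof has a genuine gap at its core.
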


\begin{proof}
By Lemma~\ref{le:theta3}, every sextuple $(C_{1},C_{2},C_{3},t_{1},t_{2},t_{3})$ of $\mathcal{X}_{2}:=\mathcal{X}_{2}(n,\delta_{2})$ is the unique preimage of some $(\phi_{1},\phi_{2},\phi_{3},\lambda)\in\mathcal{X}_{3}:=\mathcal{X}_{3}(n',2\delta_{2})$ for some $\frac{97}{100}n\leq n'\leq n$ via $\theta_{3}$. Let $(\eta_{1},\eta_{2},\eta_{3},\phi_{1},\phi_{2},\phi_{3},\lambda)$ be a solution for Reduction~\ref{re:3}. We need to produce $\alpha'_{i}\in C_{i}$ such that $\alpha'_{1}\alpha'_{2}=\alpha'_{3}$.

We work on each $a\in\tilde{P}_{n'}$ with $\lambda(a)\in\mathcal{N}$, starting with the rightmost one and moving left. To walk back the procedure, at the start of each step we assume that we have the following objects at hand:
\begin{itemize}
\item three element strings $(\eta_{i},\phi_{i})$, possibly of different lengths $n_{i}$,
\item a label function $\lambda:\tilde{P}_{m}\rightarrow\{\emptyset\}\cup\mathcal{N}$ for some $m\leq\min\{n_{i}|i\in\{1,2,3\}\}$ with $\lambda(m)\in\mathcal{N}$, and
\item\label{eq:3to2-steps} three sets $S_{i}$ of cyclic permutations, where each $\sigma\in S_{i}$ has values disjoint from those of the other elements of $S_{i}$ and from those of $\mu(\eta_{i},\phi_{i})$,
\end{itemize}
and the objects above satisfy the following properties:
\begin{itemize}
\item the permutations $\alpha_{i}:=\mu(\eta_{i},\phi_{i})\times\prod_{\sigma\in S_{i}}\sigma$ belong to the same $\mathrm{Sym}(n)$,
\item the equality $\alpha_{1}\alpha_{2}=\alpha_{3}$ holds, and
\item for every $a\in\tilde{P}_{m}$ with $\lambda(a)\in\mathcal{N}$, the property $\mathfrak{C}_{1}$ holds for the three cycles $\gamma_{i}$ of $(\eta_{i},\phi_{i})$ containing $a$ (by $\mathfrak{P}(\mathrm{labels})$ we have $c^{\blacksquare}(a)=0$, so there are cycles ``containing'' $a$).
\end{itemize}
The initial solution for Reduction~\ref{re:3} satisfies the above, with $S_{i}=\emptyset$ and $m:=\max\{a\,|\,\lambda(a)\in\mathcal{N}\}$: notably, $\mathfrak{P}(\mathcal{N},10)$ in $\mathcal{X}_{3}$ imples that any triple of cycles $\gamma_{i}$ containing a label in $\mathcal{N}$ must share $\geq 2$ common positions, so $\mathfrak{C}_{1}$ holds wherever necessary.

For $j\in\{1,2,3\}$, let $\gamma_{j}$ be the cycle of $(\eta_{j},\phi_{j})$ containing $m$: by $\mathfrak{C}_{1}$ there is a value $r$ common to the $\gamma_{j}$, so up to cycling around we can assume that $\eta_{j}\left(m-\frac{1}{2}\right)=r$ for all $j$, and that $\gamma_{j}$ is written as ``$\blacksquare \ \vec{\rho}_{j1} \ r \ \vec{\rho}_{j2} \ \blacksquare$''. Let $\alpha_{j}=\vec{\beta}_{j}(\vec{\rho}_{j1} \ r \ \vec{\rho}_{j2})$.

The label $\lambda(m)$ is of the form $(i,T)\in\{1,2,3\}\times P_{31}^{<\infty}$, and $T$ has an even number of even-length cycles. From $T$, extract either one $d$-cycle with $d$ odd or an $e$-cycle and an $f$-cycle with $e,f$ even; in either case, eliminate those cycles from $T$ and relabel $m$ accordingly. Suppose first that we extract a $d$-cycle, and introduce $d$ new points $x_{1},\ldots,x_{d}$ by embedding each $\alpha_{j}$ into the pointwise stabilizer $\mathrm{Sym}(n_{j}+d)_{(\{x_{1},\ldots,x_{d}\})}$. If $i=1$, take $\sigma=(x_{1}\ x_{2}\ \cdots\ x_{d})$ and
\begin{align*}
\alpha'_{1} & =(x_{1}\ x_{2}\ \cdots\ x_{d})\alpha_{1}=\vec{\beta}_{1}(\vec{\rho}_{11}\ r\ \vec{\rho}_{12})\sigma, \\
\alpha'_{2} & =\alpha_{2}(r\ x_{1}\ x_{2}\ \cdots\ x_{d})=\vec{\beta}_{2}(\vec{\rho}_{21}\ x_{1}\ x_{2}\ \cdots\ x_{d}\ r\ \vec{\rho}_{22}), \\
\alpha'_{3} & =\alpha'_{1}\alpha'_{2}=\vec{\beta}_{3}(\vec{\rho}_{31}\ [x_{1}\overset{\text{odds}}{\cdots}x_{d}]\ [x_{2}\overset{\text{evens}}{\cdots}x_{d-1}]\ r\ \vec{\rho}_{32}).
\end{align*}
If $i=2$, take $\sigma=(x_{1}\ x_{2}\ \cdots\ x_{d})$ and
\begin{align*}
\alpha'_{1} & =(r\ x_{1}\ x_{2}\ \cdots\ x_{d})\alpha_{1}=\vec{\beta}_{1}(\vec{\rho}_{11}\ r\ x_{1}\ x_{2}\ \cdots\ x_{d}\ \vec{\rho}_{12}), \\
\alpha'_{2} & =\alpha_{2}(x_{1}\ x_{2}\ \cdots\ x_{d})=\vec{\beta}_{2}(\vec{\rho}_{21}\ r\ \vec{\rho}_{22})\sigma, \\
\alpha'_{3} & =\alpha'_{1}\alpha'_{2}=\vec{\beta}_{3}(\vec{\rho}_{31}\ r\ [x_{2}\overset{\text{evens}}{\cdots}x_{d-1}]\ [x_{1}\overset{\text{odds}}{\cdots}x_{d}]\ \vec{\rho}_{32}).
\end{align*}
If $i=3$, take $\sigma=([x_{1}\overset{\text{odds}}{\cdots}x_{d}]\ [x_{2}\overset{\text{evens}}{\cdots}x_{d-1}])$ and
\begin{align*}
\alpha'_{1} & =\alpha_{1}(r\ x_{1}\ x_{2}\ \cdots\ x_{d})=\vec{\beta}_{1}(\vec{\rho}_{11}\ x_{1}\ x_{2}\ \cdots\ x_{d}\ r\ \vec{\rho}_{12}), \\
\alpha'_{2} & =(r\ x_{2}\ \cdots\ x_{d}\ x_{1})\alpha_{2}=\vec{\beta}_{2}(\vec{\rho}_{21}\ r\ x_{2}\ \cdots\ x_{d}\ x_{1}\ \vec{\rho}_{22}), \\
\alpha'_{3} & =\alpha'_{1}\alpha'_{2}=\vec{\beta}_{3}(\vec{\rho}_{31}\ r\ \vec{\rho}_{32})\sigma.
\end{align*}
Suppose instead that we extract an $e$-cycle and an $f$-cycle, introduce $e+f$ new points $y_{1},\ldots,y_{e},z_{1},\ldots,z_{f}$, and embed $\alpha_{j}$ into the pointwise stabilizer $\mathrm{Sym}(n_{j}+e+f)_{(\{y_{1},\ldots,y_{e},z_{1},\ldots,z_{f}\})}$. If $i=1$, take $\sigma=(y_{1}\ \cdots\ y_{e})(z_{1}\ \cdots\ z_{f})$ and
\begin{align*}
\alpha'_{1} & =(y_{1}\ \cdots\ y_{e})(z_{1}\ \cdots\ z_{f})\alpha_{1}=\vec{\beta}_{1}(\vec{\rho}_{11}\ r\ \vec{\rho}_{12})\sigma, \\
\alpha'_{2} & =\alpha_{2}(r\ z_{2}\ y_{2}\ \cdots\ y_{e}\ z_{1}\ y_{1}\ z_{3}\ \cdots\ z_{f}) \\
 & =\vec{\beta}_{2}(\vec{\rho}_{21}\ z_{2}\ y_{2}\ \cdots\ y_{e}\ z_{1}\ y_{1}\ z_{3}\ \cdots\ z_{f}\ r\ \vec{\rho}_{22}), \\
\alpha'_{3} & =\alpha'_{1}\alpha'_{2}=\vec{\beta}_{3}(\vec{\rho}_{31}\ [z_{2}\overset{\text{evens}}{\cdots}z_{f}]\ [y_{1}\overset{\text{odds}}{\cdots}y_{e-1}]\ z_{1}\ [y_{2}\overset{\text{evens}}{\cdots}y_{e}]\ [z_{3}\overset{\text{odds}}{\cdots}z_{f-1}]\ r\ \vec{\rho}_{32}).
\end{align*}
 If $i=2$, take $\sigma=(y_{1}\ \cdots\ y_{e})(z_{1}\ \cdots\ z_{f})$ and
\begin{align*}
\alpha'_{1} & =(r\ z_{2}\ y_{2}\ \cdots\ y_{e}\ z_{1}\ y_{1}\ z_{3}\ \cdots\ z_{f})\alpha_{1} \\
 & =\vec{\beta}_{1}(\vec{\rho}_{11}\ r\ z_{2}\ y_{2}\ \cdots\ y_{e}\ z_{1}\ y_{1}\ z_{3}\ \cdots\ z_{f}\ \vec{\rho}_{12}), \\
\alpha'_{2} & =\alpha_{2}(y_{1}\ \cdots\ y_{e})(z_{1}\ \cdots\ z_{f})=\vec{\beta}_{2}(\vec{\rho}_{21}\ r\ \vec{\rho}_{22})\sigma, \\
\alpha'_{3} & =\alpha'_{1}\alpha'_{2}\!=\!\vec{\beta}_{3}(\vec{\rho}_{31}\ r\ [z_{3}\overset{\text{odds}}{\cdots}z_{f-1}]\ z_{1}\ [y_{2}\overset{\text{evens}}{\cdots}y_{e}]\ z_{2}\ [y_{3}\overset{\text{odds}}{\cdots}y_{e-1}]\ y_{1}\ [z_{4}\overset{\text{evens}}{\cdots}z_{f}]\ \vec{\rho}_{32}).
\end{align*}
Finally, if $i=3$, say for instance that $e\leq f$. Call $g=(e+f)/2$, and define the following strings for brevity:
\begin{align*}
\tau_{j} & =\begin{cases}y_{2+\frac{j+1}{2}} & \text{$j$ odd,} \\ z_{1+\frac{j}{2}} & \text{$j$ even,}\end{cases} & \vec{\tau} & =(\tau_{j})_{j=1}^{2(e-2)}=(y_{3},z_{2},y_{4},z_{3},\ldots,y_{e},z_{e-1}), \\
\upsilon_{j} & =\begin{cases}z_{g+\frac{j+1}{2}} & \text{$j$ odd,} \\ z_{e+\frac{j}{2}} & \text{$j$ even,}\end{cases} & \vec{\upsilon} & =(\upsilon_{j})_{j=1}^{f-e}=(z_{g+1},z_{e+1},z_{g+2},z_{e+2},\ldots,z_{f},z_{g}).
\end{align*}
Then take $\sigma=(y_{1}\ \cdots\ y_{e})(z_{1}\ \cdots\ z_{f})$ and
\begin{align*}
\alpha'_{1} & =\alpha_{1}(r\ y_{1}\ z_{1}\ \vec{\tau}\ y_{2}\ z_{e}\ \vec{\upsilon})=\vec{\beta}_{1}(\vec{\rho}_{11}\ y_{1}\ z_{1}\ \vec{\tau}\ y_{2}\ z_{e}\ \vec{\upsilon}\ r\ \vec{\rho}_{12}), \\
\alpha'_{2} & =(r\ \vec{\upsilon}\ z_{1}\ y_{2}\ z_{e}\ \vec{\tau}\ y_{1})\alpha_{2}=\vec{\beta}_{2}(\vec{\rho}_{21}\ r\ \vec{\upsilon}\ z_{1}\ y_{2}\ z_{e}\ \vec{\tau}\ y_{1}\ \vec{\rho}_{22}), \\
\alpha'_{3} & =\alpha'_{1}\alpha'_{2}=\vec{\beta}_{3}(\vec{\rho}_{31}\ r\ \vec{\rho}_{32})\sigma.
\end{align*}
In all six cases above, add $\sigma$ to $S_{i}$ and lenghten the two element strings $(\eta_{j},\phi_{j})$ with $j\neq i$ by lengthening the cycles $\gamma_{j\neq i}$ as described above. If after this operation we have $T=\emptyset$, rename $m$ to be the next largest half-position with a label in $\mathcal{N}$.

Observe that, for all $j\in\{1,2,3\}$, every cycle that was already in $S_{j}$ and every position and half-position $b\in P_{n_{j}}\cup\tilde{P}_{n_{j}}$ with $b\leq m-1$ of every string remains unchanged. All the properties in our assumption at the start of the step are valid for the new choice of strings, labels, $m$, and $S_{j}$: in particular, if after the previous step we still have $T\neq\emptyset$, then the new cycles $\gamma'_{j}$ still satisfy $\mathfrak{C}_{1}$ using the same value $r$ as before. Repeat the above for a given $m$ until $T=\emptyset$, and for all $m$ until there are no more labels in $\mathcal{N}$. At the end, call $\alpha'_{i}$ the final $\mu(\eta_{i},\phi_{i})\times\prod_{\sigma\in S_{i}}\sigma$, denote by $C_{i}$ its conjugacy class, and let $t_{i}$ be the length of the first cycle in $(\eta_{i},\phi_{i})$, i.e.\ $t_{i}:=\min\left\{a>0\,\left|\,\phi_{i}\left(a+\frac{1}{2}\right)=\blacksquare\right.\right\}$.

By what we said above, we have $\alpha'_{1}\alpha'_{2}=\alpha'_{3}$. Furthermore, the triple of cycles at the beginning of the $\phi_{i}$, which are of length $t_{1},t_{2},t_{3}$ at the end of the procedure, must satisfy $\mathfrak{C}_{1}$: in fact, at each step $\mathfrak{C}_{1}$ is preserved for all triples whose cycles either contain $m$ or sit at the left of $m$ (or both happen in distinct strings).
\end{proof}

Starting from Reduction~\ref{re:3}, we are working in some $\mathcal{X}_{k}\subseteq\mathfrak{S}$, i.e.\ with string triples rather than triples of classes.

\section{Reduction~\ref{re:4}: no ledges}

\parbox{0.65\textwidth}{
In the fourth reduction we get rid of a technical annoyance, namely of all the half-positions $a\in\tilde{P}_{n}$ where we have $\phi_{i_{1}}(a)=\phi_{i_{2}}(a)=\phi_{i_{3}}(a+\varepsilon)=\blacksquare$ for some choice of $i_{1},i_{2},i_{3}$ and of sign $\varepsilon\in\{\pm 1\}$. We call \textit{ledge} such an $a$. An example of a ledge at $a$ is given on the right, with $i_{3}=1$ and $\varepsilon=1$.
}
\hfill
\parbox{0.3\textwidth}{
\begin{align*}
\begin{aligned}
{\scriptstyle a-2} & & {\scriptstyle a-1} & & {\scriptstyle a} & & {\scriptstyle a+1} & & {\scriptstyle a+2} & \\
\square & & \square & & \square & & \blacksquare & & \square & \\
\square & & \square & & \blacksquare & & \square & & \square & \\
\square & & \square & & \blacksquare & & \square & & \square &
\end{aligned}
\end{align*}
}

Let $(\phi_{1},\phi_{2},\phi_{3},\lambda)\in\mathcal{X}_{3}(n,\delta_{3})$, and let $a\in\tilde{P}_{n}$ be a ledge for $\phi_{1},\phi_{2},\phi_{3}$. By $\mathfrak{P}(\blacksquare,31)$ in $\mathcal{X}_{3}$ we must have $c^{\blacksquare}(b)=0$ for $b=a\pm 2$ and $b=a-\varepsilon$, and by $\mathfrak{P}(\mathrm{labels})$ and $\mathfrak{P}(\mathcal{N},10)$ in $\mathcal{X}_{3}$ we must have $\lambda(b)=\emptyset$ for all $b$ with $|b-a|\leq 2$. Let $\mathcal{L}=\{1,2,3\}\times\{\pm 1\}$ be the set of \textit{ledge labels}. Define new strings $\phi'_{i}\in\Phi_{n-2}$ and a new label function $\lambda':\tilde{P}_{n-2}\rightarrow\{\emptyset\}\cup\mathcal{N}\cup\mathcal{L}$ by
\begin{align*}
\phi'_{i}(b) & =\begin{cases} \phi_{i}(b) & (b\leq a-2), \\ \blacksquare & (b=a-1), \\ \phi_{i}(b+2) & (b\geq a), \end{cases} \\
\lambda'(b) & =\begin{cases} \lambda(b)\in\{\emptyset\}\cup\mathcal{N} & (b\leq a-2), \\ (i,\varepsilon)\in\mathcal{L} & (b=a-1,\ \phi_{i}(a+\varepsilon)=\blacksquare), \\ \lambda(b+2)\in\{\emptyset\}\cup\mathcal{N} & (b\geq a). \end{cases}
\end{align*}
Visually, we represent the process below.
\begin{align}
\overbrace{\hphantom{mmmmmm}}^{\text{will shrink into one}} \hphantom{nmmmmmmmmmmmmmmmm} \nonumber \\
\begin{aligned}
\tilde{P}_{n}: & & {\scriptstyle a-2} & & {\scriptstyle a-1} & & {\scriptstyle a} & & {\scriptstyle a+1} & & {\scriptstyle a+2} & & & & \tilde{P}_{n-2}: & & {\scriptstyle a-2}\!\! & & {\scriptstyle a-1}\!\! & & {\scriptstyle a} & \\
\phi_{1}: & & \square & & \square & & \square & & \blacksquare & & \square & & & & \phi'_{1}: & & \square & & \blacksquare & & \square & \\
\phi_{2}: & & \square & & \square & & \blacksquare & & \square & & \square & & \longrightarrow & & \phi'_{2}: & & \square & & \blacksquare & & \square & \\
\phi_{3}: & & \square & & \square & & \blacksquare & & \square & & \square & & & & \phi'_{3}: & & \square & & \blacksquare & & \square & \\
\lambda: & & \emptyset & & \emptyset & & \emptyset & & \emptyset & & \emptyset & & & & \lambda': & & \emptyset & & (1,1)\!\!\!\! & & \emptyset &
\end{aligned} \label{eq:graphiclstep}
\end{align}
By construction the new string triple has one fewer ledge, so we repeat the process until there are no ledges\footnote{Ledges create problems when performing Reduction~\ref{re:6}. In fact, they create a situation in which three cycles of length $l,1,1$ with $l>1$ share a common position, which then makes it impossible for that triple to satisfy $\mathfrak{C}_{1}$.}. Let $(\phi'_{1},\phi'_{2},\phi'_{3},\lambda')$ be the final string triple with $\phi'_{i}\in\Phi_{n'}$, and set $C'_{i}:=\mu(\phi'_{i})$.

Now we define $\theta_{4}$. Write
\begin{align}
\mathfrak{P}(\mathcal{L},k): & \begin{cases}&\text{for all $a$, if $\lambda(a)\in\mathcal{L}$ then $c^{\blacksquare}(a+r)=0$ for all $1\leq|r|\leq k$,} \end{cases} \label{eq:prop-lk} \\
\mathfrak{P}(\mathcal{L}): & \begin{cases}&\text{for all $a$, if $c^{\blacksquare}(a)=2$ then $c^{\blacksquare}(a+r)=0$ for $r\in\{\pm 1\}$.} \label{eq:prop-l} \end{cases}
\end{align}

\begin{lemma}\label{le:theta4}
Let
\begin{align*}
\mathcal{X}_{4}(m,\delta):= \ & \left\{\, (\phi_{1},\phi_{2},\phi_{3},\lambda)\in\Phi_{m}^{\times 3}\times\Lambda_{m,\{\emptyset\}\cup\mathcal{N}\cup\mathcal{L}}\ \right| \\
 & \left. \text{if $C'_{i}:=\mu(\phi_{i})$ then $C'_{i}\in\mathcal{C}_{m}(\delta)$, $\nu_{C'_{1},C'_{2},C'_{3}}$ odd;} \right. \\
 & \left. \vphantom{\Phi_{m}^{\times 3}} \text{$\mathfrak{P}(\mathrm{labels})$, $\mathfrak{P}(\blacksquare,27)$, $\mathfrak{P}(\mathcal{N},8)$, $\mathfrak{P}(\mathcal{L},27)$, $\mathfrak{P}(\mathcal{L})$} \,\right\},
\end{align*}
using \eqref{eq:cdelta}--\eqref{eq:nu}--\eqref{eq:prop-labels}--\eqref{eq:prop-cycle}--\eqref{eq:prop-n}--\eqref{eq:prop-lk}--\eqref{eq:prop-l}, and let
\begin{align*}
\theta_{4} & :\mathcal{X}_{3}(n,\delta_{3})\rightarrow\!\!\!\!\bigcup_{n'=\left\lceil\frac{29n}{31}\right\rceil}^{n}\!\!\!\!\mathcal{X}_{4}(n',2\delta_{3}), & \theta_{4}(\phi_{1},\phi_{2},\phi_{3},\lambda)=(\phi'_{1},\phi'_{2},\phi'_{3},\lambda'),
\end{align*}
following the construction described above.

Then $\theta_{4}$ is a well-defined injective function for all $\delta_{3}>0$ small enough and all $n$ large enough depending on $\delta_{3}$.
\end{lemma}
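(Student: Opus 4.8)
The plan is to verify, in order: that the ledge-removal procedure is deterministic and terminates, so that $\theta_4$ is a genuine function; that its output satisfies every defining condition of $\mathcal{X}_4(n',2\delta_3)$ for some $n'$ with $\lceil 29n/31\rceil\le n'\le n$; and that $\theta_4$ is injective.

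For well-definedness, the cornerstone is that any two distinct ledges of an element of $\mathcal{X}_3(n,\delta_3)$ lie more than $31$ apart. First I would record that a ledge at $a$ forces $c^{\blacksquare}(a)=2$, $c^{\blacksquare}(a+\varepsilon)=1$ and $c^{\blacksquare}(a-\varepsilon)=0$: the string carrying the break at $a+\varepsilon$ must differ from the two carrying breaks at $a$ (else one string has two breaks at distance $1$, against $\mathfrak{P}(\blacksquare,31)$), and the values of $c^{\blacksquare}$ at $a+1$ and $a-1$ are then pinned down by the same property. Since any two $2$-element subsets of $\{1,2,3\}$ intersect, two distinct ledges at $a,a'$ share a string with breaks at both, so $\mathfrak{P}(\blacksquare,31)$ gives $|a-a'|>31$; moreover $a'\notin\{a-1,a,a+1\}$, as $c^{\blacksquare}$ there is not $2$. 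It follows that the local move at a ledge --- which touches only the three columns at half-positions within distance $1$ of $a$, deleting two half-positions and overwriting one --- neither destroys nor creates any other ledge (the rewritten column gets $c^{\blacksquare}=3$ and its two neighbours $c^{\blacksquare}=0$ by $\mathfrak{P}(\blacksquare,31)$, and no other break lies near $a$), and that moves at distinct ledges commute. Hence the final string triple is independent of the processing order --- equivalently, all removals may be performed at once --- and since the number of ledges is at most $\frac12(k_1+k_2+k_3)=O(\delta_3 n)$, with $k_i$ the number of breaks of $\phi_i$ bounded by Proposition~\ref{pr:gmcycles}\eqref{pr:gmcycles-1}, the procedure halts.

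For membership in the image I would check the conditions one by one. Each removal deletes exactly two half-positions from all three strings simultaneously, so the $\phi'_i$ share a common length $n'=n-2L_0$ with $L_0=O(\delta_3 n)$ the number of ledges; for $\delta_3$ small this places $n'$ in $[\lceil 29n/31\rceil,n]$. Since $C'_i=\mu(\phi'_i)$ arises from $C_i\in\mathcal{C}_n(\delta_3)$ by shortening cycles with $n-n'\le n/10$, Corollary~\ref{co:classbig}\eqref{co:classbig-cut} yields $C'_i\in\mathcal{C}_{n'}(2\delta_3)$. Each removal shortens every string by two positions without eliminating any cycle (each cycle meeting a ledge has length more than $31$), hence leaves the number of cycles --- and so the parity of each class, i.e.\ whether it lies in the ambient alternating group --- unchanged, so $\nu_{C'_1,C'_2,C'_3}$ stays odd. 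For $\mathfrak{P}(\mathrm{labels})$: the only new labels are ledge labels in $\mathcal{L}$, sitting at columns with $c^{\blacksquare}=3$; surviving $\mathcal{N}$-labels stay at columns where $c^{\blacksquare}$ remains $0$; and $\mathfrak{P}(\mathcal{N},10)$ in $\mathcal{X}_3$ together with $c^{\blacksquare}(a)=2\neq 0$ at a ledge keeps every $\mathcal{N}$-label at distance more than $10$ from every ledge, so none is overwritten or collides. Finally $\mathfrak{P}(\mathcal{L})$ is precisely the statement that no ledge remains, which holds once the procedure halts.

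The step I expect to be the real obstacle is verifying the remaining invariants $\mathfrak{P}(\blacksquare,27)$, $\mathfrak{P}(\mathcal{N},8)$ and $\mathfrak{P}(\mathcal{L},27)$: these follow from a careful but elementary inspection of the columns around each removed ledge, in which the slack gained by lowering the constants from $31$ and $10$ in $\mathcal{X}_3$ to $27$ and $8$ has to be shown to absorb the contraction of at most two positions caused at that ledge, and where one must invoke the facts that distinct ledges lie more than $31$ apart (so their contraction windows do not overlap) and that newly created breaks are never themselves ledges, in order to exclude any cumulative contraction past the new thresholds. For injectivity, I would reconstruct the preimage: the output's $\mathcal{L}$-labels are exactly the columns produced by removals, they carry $c^{\blacksquare}=3$ and are pairwise more than $27$ apart by $\mathfrak{P}(\mathcal{L},27)$, and a label $(i,\varepsilon)$ there prescribes uniquely how to re-expand that column into the three columns of a ledge of type $(i,\varepsilon)$ --- reinserting two half-positions, placing the break of string $i$ offset by $\varepsilon$ from the breaks of the other two strings, and restoring empty labels. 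Carrying this out at every $\mathcal{L}$-label recovers $(\phi_1,\phi_2,\phi_3,\lambda)$ exactly; everything outside the invariant checks is routine bookkeeping.
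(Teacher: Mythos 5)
Your proposal is correct and follows essentially the same route as the paper: establish that ledges are pairwise far apart (hence the procedure is well defined and terminates), check the defining properties of $\mathcal{X}_{4}$ one by one, and invert $\theta_{4}$ by re-expanding at each $\mathcal{L}$-label. The one step you flag as the "real obstacle" is dispatched in the paper by the single observation that each inter-break gap of each string can shrink by at most $2$ positions at each of its two endpoints (one ledge per endpoint), giving $31-4=27$ and $10-2=8$ directly; your parity argument also implicitly uses that $n$ drops by exactly $2$ per removal, since the sign of a class is $(-1)^{n-k}$ and not a function of the cycle count alone.
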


\begin{proof}
By $\mathfrak{P}(\blacksquare,31)$ in $\mathcal{X}_{3}$, there are $\leq\frac{1}{31}n$ ledges for $\phi_{1},\phi_{2},\phi_{3}$. Therefore the final $n'$ satisfies $n'\geq\frac{29}{31}n$,
and by Corollary~\ref{co:classbig}\eqref{co:classbig-cut} we have $C'_{i}\in\mathcal{C}_{n'}(2\delta_{3})$. At each step of the process, say for $\lambda(a)=(i_{1},\varepsilon)$, we replace one cycle of $C_{i_{1}}$, say of length $\ell$, with a cycle of length $\ell-2\geq 2$, and we replace two cycles in both $C_{i_{2}},C_{i_{3}}$, say of length $\ell_{1}$ and $\ell_{2}$ (not necessarily the same lengths in both classes), with two cycles of length $\ell_{1}-1\geq 2$ and $\ell_{2}-1\geq 2$. This implies $\nu_{C_{1},C_{2},C_{3}}=\nu_{C'_{1},C'_{2},C'_{3}}$ at each step, so the final $\nu_{C'_{1},C'_{2},C'_{3}}$ is odd.

We still have $\mathfrak{P}(\mathrm{labels})$. Informally speaking, the construction shortens the distance between consecutive values $\blacksquare$ by $\leq 2$ positions from the left and $\leq 2$ positions from the right. Therefore $\mathfrak{P}(\blacksquare,31)$ and $\mathfrak{P}(\mathcal{N},10)$ in $\mathcal{X}_{3}$ yield $\mathfrak{P}(\blacksquare,27)$, $\mathfrak{P}(\mathcal{N},8)$, and $\mathfrak{P}(\mathcal{L},27)$ in $\mathcal{X}_{4}$. Finally, if $c^{\blacksquare}(a)=2$ for $\phi'_{1},\phi'_{2},\phi'_{3}$, say we have $\phi'_{i_{1}}(a)=\phi'_{i_{2}}(a)=\blacksquare$ and $\phi'_{i_{3}}(a)=\square$. Then $\phi'_{i_{1}}(a\pm 1)=\phi'_{i_{2}}(a\pm 1)=\square$ by $\mathfrak{P}(\blacksquare,27)$, and $\phi'_{i_{3}}(a\pm 1)=\square$ because there are no ledges, thus giving us $\mathfrak{P}(\mathcal{L})$.

One look at \eqref{eq:graphiclstep} should make injectivity obvious. As in Lemma~\ref{le:theta4}, we unravel the string triple from right to left at all $a$ with $\lambda'(a)\in\mathcal{L}$ and recover the original $\phi_{i}$, thanks to the fact that $\lambda'(a)$ encodes all the necessary information; as for $\lambda$, the affected half-positions must have value $\emptyset$ (as $\mathfrak{P}(\mathrm{labels})$ and $\mathfrak{P}(\mathcal{N},10)$ hold in $\mathcal{X}_{3}$).
\end{proof}

\begin{reduction}\label{re:4}
Find, for any $(\phi_{1},\phi_{2},\phi_{3},\lambda)\in\mathcal{X}_{4}(n,\delta_{4})$, a solution septuple $(\eta_{1},\eta_{2},\eta_{3},\phi_{1},\phi_{2},\phi_{3},\lambda)$ such that $\alpha_{1}\alpha_{2}=\alpha_{3}$ for $\alpha_{i}=\mu(\eta_{i},\phi_{i})$. Furthermore, the solution must be aligned.
\end{reduction}

From Reduction~\ref{re:4} onward, we always ask for aligned solutions. The ledges that still existed up to Reduction~\ref{re:3} prevented us from claiming the whole point \eqref{de:aligned-al1} of Definition~\ref{de:aligned}, but such a condition was not necessary before. We also did not ask for disjoint values as in point \eqref{de:aligned-disj} until now, but only because it was a redundant condition: if $\mathfrak{C}_{1}$ holds for two distinct triples, they must necessarily use two distinct values $z_{1}$.

\begin{lemma}\label{le:4to3}
If there is a solution for Reduction~\ref{re:4}, for all $\delta_{4}>0$ small enough and all $n$ large enough depending on $\delta_{4}$, then there is a solution for Reduction~\ref{re:3}, for all $\delta_{3}>0$ small enough and all $n$ large enough depending on $\delta_{3}$.
\end{lemma}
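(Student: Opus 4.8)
The plan is to mirror Lemmas~\ref{le:2to1} and~\ref{le:3to2}: transport the problem into $\mathcal{X}_4$ via $\theta_4$, extract an aligned solution there, and then undo the ledge removals one at a time, each time performing an explicit local surgery on $\alpha_1,\alpha_2,\alpha_3$ that re-creates the ledge while preserving $\alpha_1\alpha_2=\alpha_3$ and the weaker alignment required by Reduction~\ref{re:3}.

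Concretely: given $(\phi_1,\phi_2,\phi_3,\lambda)\in\mathcal{X}_3(n,\delta_3)$, Lemma~\ref{le:theta4} produces $\theta_4(\phi_1,\phi_2,\phi_3,\lambda)=(\phi'_1,\phi'_2,\phi'_3,\lambda')\in\mathcal{X}_4(n',2\delta_3)$ for some $n'\geq\frac{29}{31}n$. Taking $\delta_4:=2\delta_3$ and $n$ large (so that $n'$ is large enough depending on $\delta_4$), assume there is an aligned solution $(\eta'_1,\eta'_2,\eta'_3,\phi'_1,\phi'_2,\phi'_3,\lambda')$ for Reduction~\ref{re:4}; set $\alpha_i:=\mu(\eta'_i,\phi'_i)$. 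Since the labels of $\mathcal{X}_4$ lie in $\{\emptyset\}\cup\mathcal{N}\cup\mathcal{L}$, only points \eqref{de:aligned-al1}, \eqref{de:aligned-all} and \eqref{de:aligned-disj} of Definition~\ref{de:aligned} are in force; in particular, at every half-position $a$ with $\lambda'(a)\in\mathcal{L}$ the three cycles $\gamma'_1,\gamma'_2,\gamma'_3$ sharing that break satisfy $\mathfrak{C}_1^3\mathfrak{C}_3^2\mathfrak{C}_4^2\mathfrak{C}_5\mathfrak{C}_6$ with eight pairwise disjoint sets of control values.

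Now walk $\theta_4$ backwards, treating the half-positions with $\mathcal{L}$-labels one at a time starting from the rightmost (so that re-inserting the two positions removed at that ledge does not disturb the labelling of smaller half-positions, exactly as in the injectivity argument of Lemma~\ref{le:theta4}). At a step with $\lambda'(a)=(i,\varepsilon)$, the two strings indexed by $\{1,2,3\}\setminus\{i\}$ keep a break essentially at $a$, while the break of the $i$-th string must end up one position further in the direction $\varepsilon$; re-inserting the two erased positions, this amounts to lengthening the cycles $\gamma'_1,\gamma'_2,\gamma'_3$ by one or two positions and re-offsetting the $i$-th break. For each of the six cases $(i,\varepsilon)\in\{1,2,3\}\times\{\pm1\}$ I will write $\alpha_1,\alpha_2,\alpha_3$ as $\vec\beta_j(\cdots)$ around the relevant cycles, locate the adjacent values using the control values $z_1,z_2$ supplied by the appropriate copies of $\mathfrak{C}_3,\dots,\mathfrak{C}_6$ (the surviving $\mathfrak{C}_1$-copies handling the remaining bookkeeping), and give an explicit modification, in the same spirit as the surgeries in Lemma~\ref{le:3to2}. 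In each case one checks that the product relation is preserved, that the resulting strings have precisely the cycle structure of the original $\phi_i\in\mathcal{X}_3$, that the re-introduced ledge triple shares at most one common position (so Reduction~\ref{re:3} imposes nothing there), and that every triple sharing $\geq 2$ positions is either untouched by the (local) surgery — in which case $\mathfrak{C}_1$ survives from point \eqref{de:aligned-al1} — or is one of $\gamma'_1,\gamma'_2,\gamma'_3$, for which a retained $\mathfrak{C}_1$-copy does the job even after the cycles grow. After all $\mathcal{L}$-labels are exhausted we recover element strings $(\eta_i,\phi_i)$ on the original ground set with $\mu(\eta_i,\phi_i)\in\mu(\phi_i)$, $\alpha_1\alpha_2=\alpha_3$, and $\mathfrak{C}_1$ on every triple of cycles sharing $\geq 2$ common positions — a solution for Reduction~\ref{re:3}.

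The main obstacle is the explicit six-case surgery: one must re-offset the correct break by the correct amount, keep $\alpha_1\alpha_2=\alpha_3$, and do so within the control values that alignment guarantees at an $\mathcal{L}$-break — this is precisely why Definition~\ref{de:aligned}\eqref{de:aligned-all} demands the bundle $\mathfrak{C}_1^3\mathfrak{C}_3^2\mathfrak{C}_4^2\mathfrak{C}_5\mathfrak{C}_6$ rather than something leaner, and checking that this budget is exactly enough in the worst case is the delicate point. A secondary nuisance is verifying that no position touched by a surgery carries a label, which follows from $\mathfrak{P}(\mathcal{L},27)$, $\mathfrak{P}(\mathcal{N},8)$ and $\mathfrak{P}(\mathrm{labels})$ in $\mathcal{X}_4$, so that successive steps do not interfere and the labelling is honestly reconstructed.
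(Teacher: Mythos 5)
Your strategy coincides with the paper's: pull back along $\theta_{4}$, take an aligned solution in $\mathcal{X}_{4}$, and undo the ledge removals from right to left by a local surgery at each $\mathcal{L}$-labelled break. But the proposal stops exactly where the proof begins. The entire content of this lemma is (i) the six explicit permutation identities (the paper's \eqref{eq:4to311}, \eqref{eq:4to321}, \eqref{eq:4to331} and their inverses), each of which introduces two fresh points $u,v$, lengthens the cycles so that the $i$-th break ends up offset by one position in the direction $\varepsilon$, and must be checked to preserve $\alpha_{1}\alpha_{2}=\alpha_{3}$; and (ii) the bookkeeping of which alignment properties each surgery consumes and which survive for later steps. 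You write ``I will \dots\ give an explicit modification'' and then flag the budget check as ``the delicate point'' without carrying out either, so no proof has actually been given.

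Moreover, item (ii) is not handled by ``the surviving $\mathfrak{C}_{1}$-copies'' as you suggest. A cycle can carry $\mathcal{L}$-labels at both of its breaks; since you process right to left, by the time you reach the left break of such a cycle its triple has already been altered by the surgery at its right break, and the new surgery needs not merely $\mathfrak{C}_{1}$ but, depending on $(i,\varepsilon)$, a surviving copy of $\mathfrak{C}_{3}$, $\mathfrak{C}_{4}$, $\mathfrak{C}_{5}$ or $\mathfrak{C}_{6}$ on that side. The paper therefore maintains an explicit invariant---after each step the triple to the left of the processed break still satisfies $\mathfrak{C}_{1}^{2}\mathfrak{C}_{3}\mathfrak{C}_{4}\mathfrak{C}_{5}$---and verifies it in every case; this is precisely why Definition~\ref{de:aligned}\eqref{de:aligned-all} demands the full bundle $\mathfrak{C}_{1}^{3}\mathfrak{C}_{3}^{2}\mathfrak{C}_{4}^{2}\mathfrak{C}_{5}\mathfrak{C}_{6}$ (nine properties on pairwise disjoint value sets, incidentally, not eight). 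Until the six identities are written down and this invariant is shown to propagate, the claim that the budget ``is exactly enough'' remains an assertion rather than an argument.
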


\begin{proof}
By Lemma~\ref{le:theta4}, every string triple of $\mathcal{X}_{3}:=\mathcal{X}_{3}(n,\delta_{3})$ is the unique preimage of some $(\phi_{1},\phi_{2},\phi_{3},\lambda)\in\mathcal{X}_{4}:=\mathcal{X}_{4}(n',2\delta_{3})$ for some $\frac{29}{31}n\leq n'\leq n$ via $\theta_{4}$. Let $(\eta_{1},\eta_{2},\eta_{3},\phi_{1},\phi_{2},\phi_{3},\lambda)$ be a solution for Reduction~\ref{re:4}. We need to produce $\eta'_{i}$ such that $(\eta'_{1},\eta'_{2},\eta'_{3},\phi'_{1},\phi'_{2},\phi'_{3},\lambda')$ is the required solution for Reduction~\ref{re:3}, where we set $(\phi'_{1},\phi'_{2},\phi'_{3},\lambda')=\theta_{4}^{-1}(\phi_{1},\phi_{2},\phi_{3},\lambda)$.

We work on each $a\in\tilde{P}_{n'}$ with $\lambda(a)\in\mathcal{L}$, starting with the rightmost one and moving left. At the start of each step, we assume that $\mathfrak{C}_{1}^{3}\mathfrak{C}_{3}^{2}\mathfrak{C}_{4}^{2}\mathfrak{C}_{5}\mathfrak{C}_{6}$ holds for the triple of cycles immediately to the left of $a$, and that $\mathfrak{C}_{1}^{2}\mathfrak{C}_{3}\mathfrak{C}_{4}\mathfrak{C}_{5}$ holds for the triple immediately to its right: this is true at the start of the whole process, by Definition~\ref{de:aligned}\eqref{de:aligned-all}.

There are six possibilities for $\lambda(a)$. Suppose first that $\lambda(a)=(1,1)$. Lengthen the strings by $2$ positions in correspondence of $a$, by introducing two new points $u,v$ and embedding naturally the $\alpha_{i}=\mu(\eta_{i},\phi_{i})\in\mathrm{Sym}(n)$ into the pointwise stabilizer $\mathrm{Sym}(n+2)_{(\{u,v\})}$: the strings and the solution are transformed as
\begin{align}\label{eq:4to3lengthen}
\begin{aligned}
& \!\!\! & & \,{\scriptstyle a} & &  & &  & &  \!\!\! & & \,{\scriptstyle a} \ \ \ {\scriptstyle a+1} \ \ {\scriptstyle a+2} & &  & \\
& \cdots \ x_{-2} \ x_{-1} \!\!\! & & \blacksquare & & x_{1} \ x_{2} \ \cdots & &  & & \cdots \ x_{-2} \ x_{-1} \!\!\! & & \blacksquare \ u \ \blacksquare \ v \ \blacksquare & & \!\!\! x_{1} \ x_{2} \ \cdots & \\
& \cdots \ y_{-2} \ y_{-1} \!\!\! & & \blacksquare & & y_{1} \ y_{2} \ \cdots & & \longrightarrow & & \cdots \ y_{-2} \ y_{-1} \!\!\! & & \blacksquare \ u \ \blacksquare \ v \ \blacksquare & & \!\!\! y_{1} \ y_{2} \ \cdots & \\
& \cdots \ z_{-2} \ z_{-1} \!\!\! & & \blacksquare & & z_{1} \ z_{2} \ \cdots & &  & & \cdots \ z_{-2} \ z_{-1} \!\!\! & & \blacksquare \ u \ \blacksquare \ v \ \blacksquare & & \!\!\! z_{1} \ z_{2} \ \cdots & \\
& \!\!\! & & \!\!\!\!\!(1,1)\!\!\!\!\! & &  & &  & & \!\!\! & & \,\emptyset \ \ \ \ \,\emptyset \ \ \ \ \,\emptyset & &  & 
\end{aligned}
\end{align}
Then we use $\mathfrak{C}_{1}$ on the left and $\mathfrak{C}_{4}$ on the right, and we take
\begin{align}\label{eq:4to311}
\begin{aligned}
\alpha_{1}\! & =\!\vec{\beta}_{1}(\vec{\rho}_{1}\, r)(u)(v)(s\, t\, \vec{\sigma}_{1}), & \alpha'_{1}\! & =\!(r\, u)(t\, v)\alpha_{1}(r\, t\, u\, v\, s)\!=\!\vec{\beta}_{1}(\vec{\rho}_{1}\, t\, s\, u)(r\, v\, \vec{\sigma}_{1}), \\
\alpha_{2}\! & =\!\vec{\beta}_{2}(\vec{\rho}_{2}\, r)(u)(v)(s\, t\, \vec{\sigma}_{2}), & \alpha'_{2}\! & =\!(r\, s\, v\, u\, t)\alpha_{2}\!=\!\vec{\beta}_{2}(\vec{\rho}_{2}\, r\, t)(s\, v\, u\, \vec{\sigma}_{2}), \\
\alpha_{3}\! & =\!\vec{\beta}_{3}(\vec{\rho}_{3}\, r)(u)(v)(t\, \vec{\sigma}_{3}), & \alpha'_{3}\! & =\!(r\, u)(t\, v)\alpha_{3}\!=\!\vec{\beta}_{3}(\vec{\rho}_{3}\, r\, u)(t\, v\, \vec{\sigma}_{3}).
\end{aligned}
\end{align}
For the new triple on the left we still have $\mathfrak{C}_{1}^{2}\mathfrak{C}_{3}^{2}\mathfrak{C}_{4}^{2}\mathfrak{C}_{5}\mathfrak{C}_{6}$ (and in particular $\mathfrak{C}_{1}^{2}\mathfrak{C}_{3}\mathfrak{C}_{4}\mathfrak{C}_{5}$), so at the next step our previous assumption is still valid. Moreover, for the new triple on the right we have $\mathfrak{C}_{1}^{2}\mathfrak{C}_{3}\mathfrak{C}_{5}$, and in particular $\mathfrak{C}_{1}$.

If $\lambda(a)=(2,1)$, lengthen the strings via \eqref{eq:4to3lengthen} and, using $\mathfrak{C}_{1}$ on the left and $\mathfrak{C}_{3}$ on the right, take
\begin{align}\label{eq:4to321}
\begin{aligned}
\alpha_{1}\! & =\!\vec{\beta}_{1}(\vec{\rho}_{1}\, r)(u)(v)(s\, t\, \vec{\sigma}_{1}), & \alpha'_{1}\! & =\!(r\, u\, v)\alpha_{1}(r\, u\, s)\!=\!\vec{\beta}_{1}(\vec{\rho}_{1}\, u\, v)(r\, s\, t\, \vec{\sigma}_{1}), \\
\alpha_{2}\! & =\!\vec{\beta}_{2}(\vec{\rho}_{2}\, r)(u)(v)(s\, t\, \vec{\sigma}_{2}), & \alpha'_{2}\! & =\!(r\, s\, v\, t\, u)\alpha_{2}\!=\!\vec{\beta}_{2}(\vec{\rho}_{2}\ r\, t\, u)(s\, v\, \vec{\sigma}_{2}), \\
\alpha_{3}\! & =\!\vec{\beta}_{3}(\vec{\rho}_{3}\, r)(u)(v)(s\, \vec{\sigma}_{3}), & \alpha'_{3}\! & =\!(r\, u\, v)\alpha_{1}(s\, u\, v)\alpha_{2}\!=\!\vec{\beta}_{3}(\vec{\rho}_{3}\, r\, v)(s\, u\, \vec{\sigma}_{3}).
\end{aligned}
\end{align}
The new triple on the left satisfies $\mathfrak{C}_{1}^{2}\mathfrak{C}_{3}^{2}\mathfrak{C}_{4}^{2}\mathfrak{C}_{5}\mathfrak{C}_{6}$, and in particular $\mathfrak{C}_{1}^{2}\mathfrak{C}_{3}\mathfrak{C}_{4}\mathfrak{C}_{5}$, while the new triple on the right satisfies $\mathfrak{C}_{1}^{2}\mathfrak{C}_{4}\mathfrak{C}_{5}$, and in particular $\mathfrak{C}_{1}$.

If $\lambda(a)=(3,1)$, lengthen the strings via \eqref{eq:4to3lengthen} and, using $\mathfrak{C}_{6}$ on the left and either $\mathfrak{C}_{3}$ or $\mathfrak{C}_{4}$ on the right, take
\begin{align}\label{eq:4to331}
\begin{aligned}
\alpha_{1}\! & =\!\vec{\beta}_{1}(\vec{\rho}_{1}\, r\, s)(u)(v)(t\, w\, \vec{\sigma}_{1}),\!\! & \alpha'_{1}\! & =\!(s\, u\, t\, v)\alpha_{1}(s\ t)\!=\!\vec{\beta}_{1}(\vec{\rho}_{1}\, r\, t\, v)(s\, u\, w\, \vec{\sigma}_{1}), \\
\alpha_{2}\! & =\!\vec{\beta}_{2}(\vec{\rho}_{2}\, s)(u)(v)(t\, w\, \vec{\sigma}_{2}), & \alpha'_{2}\! & =\!(s\, t\, u\, v\, w)\alpha_{2}\!=\!\vec{\beta}_{2}(\vec{\rho}_{2}\, s\, w)(t\, u\, v\, \vec{\sigma}_{2}), \\
\alpha_{3}\! & =\!\vec{\beta}_{3}(\vec{\rho}_{3}\, r\, \vec{\rho}_{4}\, s)(u)(v), & \alpha'_{3}\! & =\!(s\, u\, t\, v)\alpha_{1}(s\, u\, v\, w)\alpha_{2}\!=\!\vec{\beta}_{3}(\vec{\rho}_{3}\, r\, u\, \vec{\rho}_{4}\, s\, v).
\end{aligned}
\end{align}
The new triple on the left still satisfies $\mathfrak{C}_{1}^{3}\mathfrak{C}_{3}^{2}\mathfrak{C}_{4}^{2}\mathfrak{C}_{5}$, and in particular $\mathfrak{C}_{1}^{2}\mathfrak{C}_{3}\mathfrak{C}_{4}\mathfrak{C}_{5}$, while the new triple on the right satisfies both $\mathfrak{C}_{1}^{2}\mathfrak{C}_{3}\mathfrak{C}_{5}$ and $\mathfrak{C}_{1}^{2}\mathfrak{C}_{4}\mathfrak{C}_{5}$, and in particular $\mathfrak{C}_{1}$.

To understand what to do for $\lambda(a)\in\{(1,-1),(2,-1),(3,-1)\}$, it is enough to make two observations. First, the equation $\alpha_{1}\alpha_{2}=\alpha_{3}$ can be inverted to yield $\alpha_{2}^{-1}\alpha_{1}^{-1}=\alpha_{3}^{-1}$. Second, if the pair $(\eta,\phi)$ represents an element $\alpha$, then by writing the same strings $\eta,\phi$ from right to left we manage to represent $\alpha^{-1}$. Thus, we can retrieve the remaining solutions by inverting \eqref{eq:4to311}--\eqref{eq:4to321}--\eqref{eq:4to331} appropriately: for $\lambda(a)=(1,-1)$ use $\mathfrak{C}_{4}$ on the left, $\mathfrak{C}_{1}$ on the right, and invert \eqref{eq:4to321}; for $\lambda(a)=(2,-1)$ use $\mathfrak{C}_{3}$ on the left, $\mathfrak{C}_{1}$ on the right, and invert \eqref{eq:4to311};  for $\lambda(a)=(3,-1)$ use $\mathfrak{C}_{3}$ or $\mathfrak{C}_{4}$ on the left, $\mathfrak{C}_{5}$ on the right, and invert \eqref{eq:4to331}. As before, in all cases the new triple on the left satisfies $\mathfrak{C}_{1}^{2}\mathfrak{C}_{3}\mathfrak{C}_{4}\mathfrak{C}_{5}$, allowing us to maintain the assumption, and the new triple on the right satisfies $\mathfrak{C}_{1}$.

In all cases $\alpha'_{1}\alpha'_{2}=\alpha'_{3}$, and the resulting $(\eta'_{1},\eta'_{2},\eta'_{3},\phi'_{1},\phi'_{2},\phi'_{3},\lambda')$ is a solution of the problem obtained by walking back the construction in \eqref{eq:graphiclstep}. Repeat the procedure for all half-positions with a label in $\mathcal{L}$, and the final object is a solution for Reduction~\ref{re:3}.

Now we prove the property involving $\mathfrak{C}_{1}$ in Reduction~\ref{re:3}. For any given step of the form \eqref{eq:4to311}--\eqref{eq:4to321}--\eqref{eq:4to331} (or their inverses), call $B_{i}$ the set of cycles of $(\eta_{i},\phi_{i})$ contained in $\vec{\beta}_{i}$. Consider any triple of cycles $\gamma_{i}$ in $(\eta_{i},\phi_{i})$ sharing $\geq 2$ common positions. If at least one $\gamma_{i}$ is in $B_{i}$, then the process above shows that the corresponding triple of cycles in $(\eta'_{i},\phi'_{i})$ also shares $\geq 2$ common positions. Hence, since $\mathfrak{C}_{1}$ holds for the latter (and does not use $r,s,t,u,v,w$), it holds for the former. Now suppose $\gamma_{i}\notin B_{i}$ for all $i$. If this is the triple containing all the $\vec{\rho}_{i}$, then we showed above that $\mathfrak{C}_{1}^{2}\mathfrak{C}_{3}\mathfrak{C}_{4}\mathfrak{C}_{5}$ (and in particular $\mathfrak{C}_{1}$) holds.  If it is the triple containing all the $\vec{\sigma}_{i}$, then we showed above that $\mathfrak{C}_{1}$ holds. Every triple containing some $\vec{\rho}_{i}$ and some $\vec{\sigma}_{i}$ shares $0$ or $1$ common positions, so it is not included among the triples to consider for the extra condition of Reduction~\ref{re:3}. Hence, $\mathfrak{C}_{1}$ is satisfied by all triples that needed to be considered, and we are done.
\end{proof}

\section{Reduction~\ref{re:5}: isolating occurrences of $c^{\blacksquare}(a)=2$}

In the fifth reduction, we ``isolate'' from each other\footnote{The reader might recall that in the example of \S\ref{se:example} we modified two cycle structures without touching the third. In particular, we managed to turn $c^{\blacksquare}(a)=1$ into $c^{\blacksquare}(a)=3$ (which will happen in Reduction~\ref{re:6}). The same reader might then wonder why we do not do the same here and turn $c^{\blacksquare}(a)=2$ into $c^{\blacksquare}(a)=0$, rather than perform this ``isolation''. The reason is that undoing Reduction~\ref{re:6} in Lemma~\ref{le:6to5}, i.e.\ ``glueing back'' cycles together, is easy, while ``breaking back'' cycles is hard: to break an $(m_{1}+m_{2})$-cycle into \textit{precisely} an $m_{1}$-cycle and an $m_{2}$-cycle, one needs much more control on the solutions than the one provided by the conditions of \S\ref{se:solutions}. If the procedure then needs to be repeated multiple times on the same cycles, the complexity becomes essentially untenable.} the half-positions $a\in\tilde{P}_{n}$ with $c^{\blacksquare}(a)=2$, by creating extra $\blacksquare$ so that there will not be two such $a$ without some occurrence of $\blacksquare$ between them.

Let $(\phi_{1},\phi_{2},\phi_{3},\lambda)\in\mathcal{X}_{4}(n,\delta_{4})$, and let $a\in\tilde{P}_{n}$ with $c^{\blacksquare}(a)=2$. By $\mathfrak{P}(\blacksquare,27)$ in $\mathcal{X}_{4}$, for any such $a$ there is at least one sign $\varepsilon_{a}\in\{\pm 1\}$ such that $c^{\blacksquare}(a+\varepsilon_{a}r)=0$ for all integers $1\leq r\leq 8$ (choose $\varepsilon_{a}=1$ if both signs would be valid\footnote{This is an arbitrary choice, although we do need to make a choice to univocally define $\theta_{5}$.}); for any of those half-positions, $\lambda(a+\varepsilon_{a}r)=\emptyset$ as well since $\mathfrak{P}(\mathrm{labels})$ and $\mathfrak{P}(\mathcal{N},8)$ hold. Let $\mathcal{T}=\{\heartsuit\}$ be the set of the (unique) \textit{trapping label} $\heartsuit$. Define new strings $\phi'_{i}\in\Phi_{n}$ and a new label function $\lambda':\tilde{P}_{n}\rightarrow\{\emptyset\}\cup\mathcal{N}\cup\mathcal{L}\cup\mathcal{T}$ by\footnote{First, we must set $c^{\blacksquare}$ to $3$ for two consecutive half-positions in order to have $\nu$ odd. Second, they need to be placed at distance $\geq 5$ from $a$: if they were closer, after Reduction~\ref{re:7} we might have cycles that are too short to have the properties of Definition~\ref{de:aligned}, thus failing to undo the reductions. A bottleneck example comes from the triples of the solution in \eqref{eq:baby2}, which might fail to satisfy $\mathfrak{C}_{1}^{2}\cap\mathfrak{C}_{1}\mathfrak{C}_{2}$ under the hypothesis of Definition~\ref{de:aligned}\eqref{de:aligned-alp}. Third, we need two more spaces with $\lambda$ set to $\emptyset$ on the other side, because we need to reach $\mathfrak{P}(\mathcal{N},1)$ in Lemma~\ref{le:theta7} to guarantee that every $\theta_{k}$ is injective.}
\begin{align*}
\phi'_{i}(b) & =\begin{cases} \phi_{i}(b) & (b\notin\{a+5\varepsilon_{a},a+6\varepsilon_{a}\}), \\ \blacksquare & (b\in\{a+5\varepsilon_{a},a+6\varepsilon_{a}\}), \end{cases} \\
\lambda'(b) & =\begin{cases} \lambda(b)\in\{\emptyset\}\cup\mathcal{N}\cup\mathcal{L} & (b\notin\{a+5\varepsilon_{a},a+6\varepsilon_{a}\}), \\ \heartsuit\in\mathcal{T} & (b\in\{a+5\varepsilon_{a},a+6\varepsilon_{a}\}). \end{cases}
\end{align*}
Visually, the result of the process for one $a$ (with $\varepsilon_{a}=1$) is as below.
\begin{align*}
\begin{aligned}
\overbrace{\hphantom{mmmmmmmmmmmmmmmmmmmmm}}^{\text{all of them had to be $\square$ and $\emptyset$ originally}} \hphantom{mmn} \\
\begin{aligned}
\tilde{P}_{n}: & & {\scriptstyle a-1} & & {\scriptstyle a} & & {\scriptstyle a+1} & & {\scriptstyle a+2} & & {\scriptstyle a+3} & & {\scriptstyle a+4} & & {\scriptstyle a+5} & & {\scriptstyle a+6} & & {\scriptstyle a+7} & & {\scriptstyle a+8} & & {\scriptstyle a+9} & \\
\phi'_{1}: & & \square & & \square & & \square & & \square & & \square & & \square & & \blacksquare & & \blacksquare & & \square & & \square & & \square & \\
\phi'_{2}: & & \square & & \blacksquare & & \square & & \square & & \square & & \square & & \blacksquare & & \blacksquare & & \square & & \square & & \square & \\
\phi'_{3}: & & \square & & \blacksquare & & \square & & \square & & \square & & \square & & \blacksquare & & \blacksquare & & \square & & \square & & \square & \\
\lambda': & & \emptyset & & \emptyset & & \emptyset & & \emptyset & & \emptyset & & \emptyset & & \heartsuit & & \heartsuit & & \emptyset & & \emptyset & & \emptyset &
\end{aligned}
\end{aligned}
\end{align*}
We do so for all $a$, and obtain at the end new strings $\phi'_{i}$, new classes $C'_{i}:=\mu(\phi'_{i})\in\mathcal{C}_{n}$, and a new label function $\lambda'$.

Now we define $\theta_{5}$. Write
\begin{align}
\mathfrak{P}(\blacksquare',k): & \begin{cases}&\text{for all $i$, for all $a\neq a'$ s.t.\ $\phi_{i}(a)=\phi_{i}(a')=\blacksquare$,} \\ &\text{either $|a-a'|>k$ or $\lambda(a)=\heartsuit$ or $\lambda(a')=\heartsuit$,} \\ &\text{and in all cases $|a-a'|\neq 2$.}\end{cases} \label{eq:prop-cyclet} \\
\mathfrak{P}(\mathcal{T},a,\varepsilon,\varepsilon',R): & \begin{cases}&\text{$(c^{\blacksquare},\lambda)(a+\varepsilon)=(3,\heartsuit)$, $(c^{\blacksquare},\lambda)(a+\varepsilon')=(2,\emptyset)$,} \\ &\text{and $(c^{\blacksquare},\lambda)(a+r)=(0,\emptyset)$ for all $r\in R$.}\end{cases} \nonumber \\
\mathfrak{P}(\mathcal{T}): & \begin{cases}&\text{for all $a$ s.t.\ $\lambda(a)=\heartsuit$, we must have one among:} \\ &\text{1) $\mathfrak{P}(\mathcal{T},a,1,-5,\{-4,-3,-2,-1,2,3\})$, or} \\ &\text{2) $\mathfrak{P}(\mathcal{T},a,-1,-6,\{-5,-4,-3,-2,1,2\})$, or} \\ &\text{3) $\mathfrak{P}(\mathcal{T},a,1,6,\{-2,-1,2,3,4,5\})$, or} \\ &\text{4) $\mathfrak{P}(\mathcal{T},a,-1,5,\{-3,-2,1,2,3,4\})$.}\end{cases} \label{eq:prop-t} \\
\mathfrak{P}(\mathcal{T}'): & \begin{cases}&\text{for all $a_{1}<a_{2}$ s.t.\ $c^{\blacksquare}(a_{1})=c^{\blacksquare}(a_{2})=2$,} \\ &\text{there is $a_{1}<b<a_{2}$ s.t.\ $c^{\blacksquare}(b)\geq 1$.}\end{cases} \label{eq:prop-tprime}
\end{align}
The aforementioned ``isolation'' is encoded in $\mathfrak{P}(\mathcal{T}')$.

\begin{lemma}\label{le:theta5}
Let
\begin{align*}
\mathcal{X}_{5}(m,\delta):= \ & \left\{\, (\phi_{1},\phi_{2},\phi_{3},\lambda)\in\Phi_{m}^{\times 3}\times\Lambda_{m,\{\emptyset\}\cup\mathcal{N}\cup\mathcal{L}\cup\mathcal{T}}\ \right| \\
 & \left. \text{if $C'_{i}:=\mu(\phi_{i})$ then $C'_{i}\in\mathcal{C}_{m}(\delta)$, $\nu_{C'_{1},C'_{2},C'_{3}}$ odd;} \right. \\
 & \left. \vphantom{\Phi_{m}^{\times 3}} \text{$\mathfrak{P}(\mathrm{labels})$, $\mathfrak{P}(\blacksquare',27)$, $\mathfrak{P}(\mathcal{N},2)$, $\mathfrak{P}(\mathcal{L},21)$, $\mathfrak{P}(\mathcal{L})$, $\mathfrak{P}(\mathcal{T})$, $\mathfrak{P}(\mathcal{T}')$} \,\right\},
\end{align*}
using \eqref{eq:cdelta}--\eqref{eq:nu}--\eqref{eq:prop-labels}--\eqref{eq:prop-cyclet}--\eqref{eq:prop-n}--\eqref{eq:prop-lk}--\eqref{eq:prop-l}--\eqref{eq:prop-t}--\eqref{eq:prop-tprime}, and let
\begin{align*}
\theta_{5} & :\mathcal{X}_{4}(n,\delta_{4})\rightarrow\mathcal{X}_{5}(n,10\delta_{4}), & \theta_{5}(\phi_{1},\phi_{2},\phi_{3},\lambda)=(\phi'_{1},\phi'_{2},\phi'_{3},\lambda'),
\end{align*}
following the construction described above.

Then $\theta_{5}$ is a well-defined injective function for all $\delta_{4}>0$ small enough and all $n$ large enough depending on $\delta_{4}$.
\end{lemma}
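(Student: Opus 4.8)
The statement is a verification lemma, so the plan is simply to check, in turn, that the construction is well-posed, that its output lies in $\mathcal{X}_{5}(n,10\delta_{4})$ (i.e.\ satisfies every listed property), and that $\theta_{5}$ admits a left inverse. First I would verify well-posedness of the sign choice. Fix a \emph{bad} half-position $a$, i.e.\ $c^{\blacksquare}(a)=2$, with $\phi_{i_{1}}(a)=\phi_{i_{2}}(a)=\blacksquare$. By $\mathfrak{P}(\blacksquare,27)$ the strings $\phi_{i_{1}},\phi_{i_{2}}$ have no break within distance $27$ of $a$, and by $\mathfrak{P}(\mathcal{L})$ we have $c^{\blacksquare}(a\pm 1)=0$, so the only possible obstruction to $\varepsilon_{a}=+1$ (resp.\ $-1$) being admissible is a break of $\phi_{i_{3}}$ at $a+r$ with $2\le |r|\le 8$; but a single such break forces, via $\mathfrak{P}(\blacksquare,27)$ applied to $\phi_{i_{3}}$, that $\phi_{i_{3}}$ is break-free on the opposite side of $a$ over a long range, so at least one sign is always admissible, and I take $+1$ when both are. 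On the admissible side $\mathfrak{P}(\mathrm{labels})$ excludes $\mathcal{L}$-labels (which need $c^{\blacksquare}=3$) and $\mathfrak{P}(\mathcal{N},8)$ excludes $\mathcal{N}$-labels, so $\lambda\equiv\emptyset$ there and the two new breaks at $a+5\varepsilon_{a},a+6\varepsilon_{a}$ sit in an all-$\square$, all-$\emptyset$ window.

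The second ingredient, used repeatedly, is that any two bad half-positions $a\neq a'$ satisfy $|a-a'|>27$: the breaking strings at $a$ and at $a'$ are $2$-subsets of $\{1,2,3\}$, hence intersect, and $\mathfrak{P}(\blacksquare,27)$ applied to a common breaking string gives the bound. Consequently the clean windows of distinct bad positions are disjoint and the construction is unambiguous. Since each bad $a$ splits, in \emph{every} string, a single cycle into three cycles of which the middle one is a fixed point, a short case check on the parity of the length of that cycle shows $\sum_{i\text{ even}}n_{i}$ changes by an even amount in each of the three classes, so by Proposition~\ref{pr:cyclestr} the membership in $\mathrm{Alt}(n)$ is unchanged and $\nu_{C'_{1},C'_{2},C'_{3}}$ stays odd. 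The number of cycles created in each $C'_{i}$ is $2\cdot\#\{\text{bad }a\}\le\sum_{j}(\text{cycles of }C_{j})$, which is $O(\delta_{4}n)$ by Proposition~\ref{pr:gmcycles}(a) (with the routine $\mathrm{Sym}$/$\mathrm{Alt}$ translation), so each $C'_{i}$ still has $O(\delta_{4}n)$ cycles and Proposition~\ref{pr:gmcycles}(b) (via Corollary~\ref{co:classbig}) puts it in $\mathcal{C}_{n}(10\delta_{4})$.

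Next I would run through the labelled properties. $\mathfrak{P}(\mathrm{labels})$ is immediate since the new $\heartsuit$ positions acquire $c^{\blacksquare}=3$. For $\mathfrak{P}(\blacksquare',27)$: a pair of old breaks in one string is $>27$ apart, any pair containing a new break has an endpoint carrying a $\heartsuit$-label, and the forbidden distance $2$ is never realised because the positions $a\pm 3\varepsilon_{a},a\pm 4\varepsilon_{a},a\pm 7\varepsilon_{a},a\pm 8\varepsilon_{a}$ all lie in the clean window. For $\mathfrak{P}(\mathcal{N},2)$ and $\mathfrak{P}(\mathcal{L},21)$: any $\mathcal{N}$- or $\mathcal{L}$-label $b$ has $c^{\blacksquare}(b)=0$, so $b$ is distinct from every bad $a$ and hence, by $\mathfrak{P}(\mathcal{N},8)$ (resp.\ $\mathfrak{P}(\mathcal{L},27)$), far from it, so the new breaks, at distance $\le 6$ from $a$, remain far from $b$. $\mathfrak{P}(\mathcal{L})$ holds because the set of half-positions with $c^{\blacksquare}=2$ is unchanged and none of them gains a break at distance $1$. $\mathfrak{P}(\mathcal{T})$ is verified by matching its four alternatives to the two choices of $\varepsilon_{a}$ times the two $\heartsuit$ positions $a+5\varepsilon_{a},a+6\varepsilon_{a}$; one case suffices, the rest following by the obvious symmetries. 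The crux is $\mathfrak{P}(\mathcal{T}')$: for consecutive bad half-positions $a<a'$ we have $a'-a>27$, so if $\varepsilon_{a}=+1$ the inserted break at $a+5$ separates them, and if $\varepsilon_{a}=-1$ then by the definition of $\varepsilon_{a}$ the window $\{a+1,\dots,a+8\}$ already contained an old break, which lies in $(a,a')$ since $a+8<a'$. I expect this last point, together with the bookkeeping of the four $\mathfrak{P}(\mathcal{T})$-configurations, to be the main place where care is needed; everything else is routine.

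Finally, injectivity: since no $\mathcal{X}_{4}$-triple carries a $\mathcal{T}$-label, the $\heartsuit$-labelled half-positions of $\theta_{5}(\phi_{1},\phi_{2},\phi_{3},\lambda)$ are exactly the $2\cdot\#\{\text{bad }a\}$ positions inserted by the construction; deleting those breaks from all three strings and resetting those labels to $\emptyset$ recovers the original string triple, so this deletion is a left inverse of $\theta_{5}$ and $\theta_{5}$ is injective.
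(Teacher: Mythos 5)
Your proof is correct and follows essentially the same route as the paper's, only with more detail on the well-posedness of the sign choice $\varepsilon_{a}$ and on the $>27$ separation between half-positions with $c^{\blacksquare}=2$. One harmless slip: an $\mathcal{L}$-labelled half-position $b$ has $c^{\blacksquare}(b)=3$ (not $0$) by $\mathfrak{P}(\mathrm{labels})$, but your conclusion that $b$ is distinct from every bad $a$ still holds since $3\neq 2$, so the argument for $\mathfrak{P}(\mathcal{L},21)$ goes through unchanged.
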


\begin{proof}
By construction, $c^{\blacksquare}(\phi'_{i})\leq 3(c^{\blacksquare}(\phi_{1})+c^{\blacksquare}(\phi_{2})+c^{\blacksquare}(\phi_{3}))$ for each $i$: therefore, $C'_{i}\in\mathcal{C}_{n}(10\delta_{4})$ by Proposition~\ref{pr:gmcycles}\eqref{pr:gmcycles-1}--\eqref{pr:gmcycles-2}. At each step of the process, from each $C_{i}$ we replace one cycle, say of length $\ell$, with three cycles of length $1,\ell_{1},\ell_{2}$ with $1+\ell_{1}+\ell_{2}=\ell$ and $\ell_{1},\ell_{2}\geq 2$. This implies $\nu_{C_{1},C_{2},C_{3}}=\nu_{C'_{1},C'_{2},C'_{3}}$ at each step, so the final $\nu_{C'_{1},C'_{2},C'_{3}}$ is odd.

We still have $\mathfrak{P}(\mathrm{labels})$. Informally speaking, the distance restrictions for previous labels are now relaxed by $6$, i.e.\ $\mathfrak{P}(\mathcal{N},2)$ and $\mathfrak{P}(\mathcal{L},21)$ hold. There are still no ledges, so $\mathfrak{P}(\mathcal{L})$ holds. Every cycle of $C'_{i}$ either has length $>27$ or (its representation in $\phi'_{i}$) starts or ends at some $a\in\tilde{P}_{n}$ with $\lambda'(a)=\heartsuit$, and even in that case the cycle must have length either $=1$ or $\geq 3$, so $\mathfrak{P}(\blacksquare',27)$ holds. By construction, half-positions $a$ with $\lambda(a)=\heartsuit$ come in pairs and know their immediate surroundings, yielding $\mathfrak{P}(\mathcal{T})$. Finally, if for $\phi'_{1},\phi'_{2},\phi'_{3}$ we have $c^{\blacksquare}(a_{1})=c^{\blacksquare}(a_{2})=2$ and $c^{\blacksquare}(b)=0$ for all $a_{1}<b<a_{2}$, then the same is true for $\phi_{1},\phi_{2},\phi_{3}$, but pigeonhole and $\mathfrak{P}(\blacksquare,27)$ in $\mathcal{X}_{4}$ imply that $a_{2}-a_{1}>27$: the construction then forces $c^{\blacksquare}(a_{1}+5)=c^{\blacksquare}(a_{1}+6)=3$ for $\phi'_{1},\phi'_{2},\phi'_{3}$, and we have $\mathfrak{P}(\mathcal{T}')$.

Inverting $\theta_{5}$ is immediate, thus giving injectivity: wherever $(\phi'_{1},\phi'_{2},\phi'_{3},\lambda')(b)=(\blacksquare,\blacksquare,\blacksquare,\heartsuit)$, put $(\phi_{1},\phi_{2},\phi_{3},\lambda)(b)=(\square,\square,\square,\emptyset)$.
\end{proof}

\begin{reduction}\label{re:5}
\,Find, for any $(\phi_{1},\phi_{2},\phi_{3},\lambda)\in\mathcal{X}_{5}(n,\delta_{5})$, a solution septuple $(\eta_{1},\eta_{2},\eta_{3},\phi_{1},\phi_{2},\phi_{3},\lambda)$ such that $\alpha_{1}\alpha_{2}=\alpha_{3}$ for $\alpha_{i}=\mu(\eta_{i},\phi_{i})$. Furthermore, the solution must be aligned.
\end{reduction}

\begin{lemma}\label{le:5to4}
If there is a solution for Reduction~\ref{re:5}, for all $\delta_{5}>0$ small enough and all $n$ large enough depending on $\delta_{5}$, then there is a solution for Reduction~\ref{re:4}, for all $\delta_{4}>0$ small enough and all $n$ large enough depending on $\delta_{4}$.
\end{lemma}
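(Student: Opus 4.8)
The proof of Lemma~\ref{le:5to4} follows the now-standard pattern of the ``undoing'' lemmas (Lemmas~\ref{le:3to2},~\ref{le:4to3}): starting from a solution for Reduction~\ref{re:5}, we process one at a time the pairs of half-positions carrying a trapping label $\heartsuit$, from right to left, and at each step we merge the spurious $1$-cycle and the extra $\blacksquare$'s back into the surrounding cycle, transforming the three element strings and their solution so that the equation $\alpha_1\alpha_2=\alpha_3$ is maintained and so that alignedness is preserved. Since $\theta_5$ is injective (Lemma~\ref{le:theta5}), each string triple of $\mathcal{X}_4:=\mathcal{X}_4(n,\delta_4)$ is the unique preimage under $\theta_5$ of some $(\phi_1,\phi_2,\phi_3,\lambda)\in\mathcal{X}_5:=\mathcal{X}_5(n,10\delta_4)$, and walking the construction backwards for every $\heartsuit$-pair produces the desired solution septuple $(\eta'_1,\eta'_2,\eta'_3,\phi'_1,\phi'_2,\phi'_3,\lambda')$ with $(\phi'_1,\phi'_2,\phi'_3,\lambda')=\theta_5^{-1}(\phi_1,\phi_2,\phi_3,\lambda)$.

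\textbf{The local move.} Fix the rightmost pair $a+\varepsilon_a 5,a+\varepsilon_a 6$ with label $\heartsuit$. By $\mathfrak{P}(\mathcal{T})$ we know the exact local picture around it: there is a half-position $a$ with $c^{\blacksquare}(a)=2$ at controlled (bounded) distance, six positions in between and at least two positions on the far side are all $\square$ with label $\emptyset$, and the $1$-cycle sits between the two new breaks. We want to delete the two positions $a+\varepsilon_a 5,a+\varepsilon_a 6$ (carrying the spurious $1$-cycle) from all three strings and re-lengthen the neighbouring cycle of each $\phi_i$ to match $\theta_5^{-1}$. Concretely, write in each $\alpha_i$ the relevant cycle as $\vec\beta_i(\vec\rho_i\ r\ \vec\sigma_i)$ together with the isolated $1$-cycle $(u_i)$ on the value placed between the two $\heartsuit$-breaks, where $u_i$ is a fresh value in each string; alignedness of the Reduction~\ref{re:5} solution guarantees (via $\mathfrak{C}_1$, and via the stronger properties attached to the $\heartsuit$-breaks as encoded in Definition~\ref{de:aligned}) that the breaks flanking the $1$-cycle are shared and that enough common values are available near them. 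Using a short $3$- or $4$-cycle supported on $\{r,u_1\text{ or }u_2,\dots\}$ and the identity $\alpha'_1\alpha'_2=\alpha'_3$ with $\alpha'_i$ differing from $\alpha_i$ by left/right multiplication by these short permutations — exactly as in the displays \eqref{eq:4to311}--\eqref{eq:4to321}--\eqref{eq:4to331} but absorbing one point rather than introducing two — we reattach the $1$-cycle to the cycle on the side indicated by $\varepsilon_a$, deleting the extra breaks. The six reserved $\square$-positions and the two further $\emptyset$-positions give us room to perform the cycling-around and to keep the new triple long enough (length $\geq 2$) that the alignedness conditions still make sense.

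\textbf{Bookkeeping and alignedness.} As in Lemma~\ref{le:4to3}, the point is that every position, half-position, label, and every cycle strictly to the left of the $\heartsuit$-pair we are currently processing is untouched, so the induction hypothesis — that the triples adjacent to the next $\heartsuit$-pair to be processed still satisfy the alignedness conditions $\mathfrak{C}_1$, $\mathfrak{C}_1^2\cap\mathfrak{C}_1\mathfrak{C}_2$, $\mathfrak{C}_1^3\mathfrak{C}_3^2\mathfrak{C}_4^2\mathfrak{C}_5\mathfrak{C}_6$, $\mathfrak{C}_1^4\mathfrak{C}_2\mathfrak{C}_3^2\mathfrak{C}_4^2\mathfrak{C}_5\mathfrak{C}_6$ prescribed by whichever labels are in force there — is preserved. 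One checks that the merged triple, after the move, satisfies whatever alignedness property point~\eqref{de:aligned-al1}--\eqref{de:aligned-alpl} of Definition~\ref{de:aligned} demands of it given the labels $\lambda'$ now attached to its breaks; the ``disjoint sets of values'' condition~\eqref{de:aligned-disj} is maintained because the only new values introduced, $u_1,u_2,u_3$, are freshly used and disappear into the interior of a single cycle. Finally we also verify that the short permutations used in each local move respect the parity constraints (each move removes a single $1$-cycle, so $\nu$ is unchanged), matching the cycle-structure bookkeeping already recorded in the proof of Lemma~\ref{le:theta5}.

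\textbf{Main obstacle.} The delicate point is \emph{not} the algebraic manipulation — that is a finite case check over the four sub-cases of $\mathfrak{P}(\mathcal{T})$ and the side $\varepsilon_a$ — but ensuring that after re-gluing, the triple of cycles that now flanks the former $\heartsuit$-location still satisfies the \emph{strong} alignedness conditions required when it abuts a $\mathcal{T}$-, $\mathcal{L}$-, or $\mathcal{P}$-labelled break (the footnote after \eqref{eq:prop-t} about the bottleneck example \eqref{eq:baby2} is precisely the warning that distances were chosen so that this works). One must pick the cycling-around and the short permutations carefully so that several values $z_1,z_2$ in the required $\mathfrak{C}_j$-patterns survive the merge simultaneously, rather than just one; the six reserved interior $\square$'s are exactly enough slack to do this, and the verification that they suffice in the worst case is the crux of the lemma.
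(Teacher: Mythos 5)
Your high-level architecture is the right one (invert $\theta_{5}$ via its injectivity, process the $\heartsuit$-pairs from right to left, check that alignment is inherited), but the actual local move --- which is the entire mathematical content of the lemma --- is never produced, and the sketch of it rests on a wrong picture of the local configuration. After $\theta_{5}$, \emph{both} new breaks $a,a+1$ are present in \emph{all three} strings, so in each $\alpha_{i}$ the original cycle has been split into \emph{three} pieces, two nontrivial cycles flanking a $1$-cycle; the undoing must merge all three back into one cycle in each string simultaneously, not ``reattach the $1$-cycle to the cycle on the side indicated by $\varepsilon_{a}$'' (the sign $\varepsilon_{a}$ plays no role in the undoing --- both breaks must vanish). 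Moreover, the values carried by the three $1$-cycles are not ``fresh values $u_{1},u_{2},u_{3}$'' that may differ between strings: the three $1$-cycles share their single position, so point~\eqref{de:aligned-al1} of Definition~\ref{de:aligned} forces them to carry one common value $s$, and likewise the two flanking triples carry common values $r$ and $t$. This coincidence is precisely what makes the merge possible: writing $\alpha_{i}=\vec{\beta}_{i}(\vec{\rho}_{i}\ r)(s)(t\ \vec{\sigma}_{i})$, one takes $\alpha'_{1}=(r\ t\ s)\alpha_{1}$, $\alpha'_{2}=\alpha_{2}(r\ t\ s)$, $\alpha'_{3}=(r\ t\ s)\alpha_{3}(r\ t\ s)$, which preserves $\alpha'_{1}\alpha'_{2}=\alpha'_{3}$ and fuses the three cycles in each string. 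Without identifying the common values $r,s,t$ and exhibiting such an identity, ``a short $3$- or $4$-cycle supported on $\{r,u_{1}\text{ or }u_{2},\dots\}$'' is not a proof.

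You also mislocate the difficulty. The alignment check for this step is short: every property of every triple not involving $r,s,t$ is inherited verbatim (consecutive values in the $\vec{\rho}_{i}$ and $\vec{\sigma}_{i}$ stay consecutive, and point~\eqref{de:aligned-disj} of Definition~\ref{de:aligned} guarantees the other triples never used $r,s,t$), and the one new requirement --- $\mathfrak{C}_{1}$ for the merged triple --- is witnessed by $z_{1}=r$. The ``six reserved $\square$'s'' are not the crux here; they are budgeted for the later reductions (they guarantee the cycle lengths needed in Definition~\ref{de:aligned}\eqref{de:aligned-alp} when undoing Reduction~\ref{re:7} and in Proposition~\ref{pr:7good}), not for this lemma.
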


\begin{proof}
By Lemma~\ref{le:theta5}, every string triple of $\mathcal{X}_{4}:=\mathcal{X}_{4}(n,\delta_{4})$ is the unique preimage of some $(\phi_{1},\phi_{2},\phi_{3},\lambda)\in\mathcal{X}_{5}:=\mathcal{X}_{5}(n,10\delta_{4})$ via $\theta_{5}$. Let $(\eta_{1},\eta_{2},\eta_{3},\phi_{1},\phi_{2},\phi_{3},\lambda)$ be a solution for Reduction~\ref{re:5}. We need to produce $\eta'_{i}$ such that $(\eta'_{1},\eta'_{2},\eta'_{3},\phi'_{1},\phi'_{2},\phi'_{3},\lambda')$ is the required solution for Reduction~\ref{re:4}, where $(\phi'_{1},\phi'_{2},\phi'_{3},\lambda')=\theta_{5}^{-1}(\phi_{1},\phi_{2},\phi_{3},\lambda)$.

Let $a,a+1$ be a pair of half-positions with $\lambda(a)=\lambda(a+1)=\heartsuit$. Using $\mathfrak{C}_{1}$, there are values $r,s,t$ for which we can write $\alpha_{i}=\vec{\beta}_{i}(\vec{\rho}_{i}\ r)(s)(t\ \vec{\sigma}_{i})$. Then take
\begin{align}\label{eq:5to4}
\begin{aligned}
\alpha'_{1} & =(r\ t\ s)\alpha_{1}=\vec{\beta}_{1}(\vec{\rho}_{1}\ r\ \vec{\sigma}_{1}\ t\ s), \\
\alpha'_{2} & =\alpha_{2}(r\ t\ s)=\vec{\beta}_{2}(\vec{\rho}_{2}\ t\ \vec{\sigma}_{2}\ s\ r), \\
\alpha'_{3} & =(r\ t\ s)\alpha_{3}(r\ t\ s)=\vec{\beta}_{3}(\vec{\rho}_{3}\ t\ r\ \vec{\sigma}_{3}\ s).
\end{aligned}
\end{align}
Then $\alpha'_{1}\alpha'_{2}=\alpha'_{3}$, which results in a solution of the problem obtained by removing all the cycle breaks at $a,a+1$. Repeating the procedure for all pairs, starting from the rightmost one and moving left, we obtain a solution for Reduction~\ref{re:4}.

To prove that the solution is aligned, note that the triples of cycles in the new solution inherit all the properties from the ones in the old solution, except for the $\mathfrak{C}_{1}$ involving $r,s,t$. Then, so as to cover this last case, we can prove $\mathfrak{C}_{1}$ for the new cycles in \eqref{eq:5to4} using $z_{1}=r$.
\end{proof}

\section{Reduction~\ref{re:6}: no places with $c^{\blacksquare}(a)=1$}\label{se:re6}

In the sixth reduction, we get rid of half-positions $a\in\tilde{P}_{n}$ with $c^{\blacksquare}(a)=1$, by replacing them with $c^{\blacksquare}(a)=3$.

Let $(\phi_{1},\phi_{2},\phi_{3},\lambda)\in\mathcal{X}_{5}(n,\delta_{5})$, and let $a\in\tilde{P}_{n}$ with $c^{\blacksquare}(a)=1$. By $\mathfrak{P}(\mathcal{L})$ in $\mathcal{X}_{5}$, for any $b$ with $c^{\blacksquare}(b)=2$ we must have $|b-a|\geq 2$, and using $\mathfrak{P}(\mathrm{labels})$, $\mathfrak{P}(\blacksquare',27)$, and $\mathfrak{P}(\mathcal{T})$ we obtain that for any $b$ with $c^{\blacksquare}(b)=3$ we must have $|b-a|\geq 3$. Moreover, by the same properties, for any two half-positions $x_{1}<x_{2}$ with $c^{\blacksquare}(x_{i})\in\{1,2\}$ and $x_{2}-x_{1}\leq 27$, any two indices $i_{1},i_{2}$ for which $\phi_{i_{1}}(x_{1})=\phi_{i_{2}}(x_{2})=\blacksquare$ must be distinct. Therefore, if two $a_{1}<a_{2}$ with $a_{2}-a_{1}\leq 2$ have $c^{\blacksquare}(a_{j})=1$ and $b$ has $c^{\blacksquare}(b)\geq 2$, then we must have $|b-a_{j}|\geq 25$ for both $j$, and there can never be four $a_{1}<a_{2}<a_{3}<a_{4}$ with $a_{4}-a_{1}\leq 27$.

Taking all these facts together, any $c^{\blacksquare}(a)=1$ must be sitting in a sequence of values of $c^{\blacksquare}$ for neighbouring half-positions that has one of the following forms\footnote{The reason why we need so many cases in \eqref{eq:graphicsstart} is that, if the next closest cycle break occurs at some $r$ with $c^{\blacksquare}(r)=3$, then $r$ needs to be at distance $\geq 3$ from any cycle break created during this reduction. If it were closer, after Reduction~\ref{re:7} we might have cycles that are too short to have the properties of Definition~\ref{de:aligned}\eqref{de:aligned-alp}, thus failing to undo the reductions (we could end up having triples without $\mathfrak{C}_{1}$, i.e.\ without common values to use in the undoing process). Then, even after taking care of this requirement, we still need to differentiate more subcases in \eqref{eq:graphicsstart}, for instance the 3rd and the 4th, so as to keep $\nu$ odd.} (ignore the underlining for now, it will come into play soon):
\begin{align}\label{eq:graphicsstart}
\begin{aligned}
& (x,0,\underline{1},0,y) \ \text{with $x,y\in\{0,2\}$,} \\
& (0,0,\underline{1},\underline{1},0,0), \\
& (0,0,\underline{1},\underline{0},\underline{1},\underline{0},0,0), \\
& (0,0,\underline{0},\underline{1},\underline{0},\underline{1},0,0), \\
& (0,0,0,\underline{1},\underline{1},\underline{1},0,0,0), \\
& (0,0,0,\underline{1},\underline{1},\underline{0},\underline{1},\underline{0},0,0), \\
& (0,0,\underline{0},\underline{1},\underline{0},\underline{1},\underline{1},0,0,0), \\
& (0,0,0,\underline{1},\underline{0},\underline{1},\underline{0},\underline{1},0,0,0).
\end{aligned}
\end{align}
By $\mathfrak{P}(\mathrm{labels})$ and $\mathfrak{P}(\mathcal{N},2)$, at all the half-positions $x$ underlined in some case of \eqref{eq:graphicsstart} we have $\lambda(x)=\emptyset$.

Let $\mathcal{S}=\{0,1,2,3\}$ be the set of \textit{shutter labels}. Define new strings $\phi'_{i}\in\Phi_{n}$ and a new label function $\lambda':\tilde{P}_{n}\rightarrow\{\emptyset\}\cup\mathcal{N}\cup\mathcal{L}\cup\mathcal{T}\cup\mathcal{S}$ by
\begin{align*}
\phi'_{i}(b) & =\begin{cases} \blacksquare & (\text{$c^{\blacksquare}(b)$ appears underlined in \eqref{eq:graphicsstart} for some choice of $a$}), \\ \phi_{i}(b) & (\text{otherwise}), \end{cases} \\
\lambda'(b) & =\begin{cases} 0\in\mathcal{S} & (\text{$c^{\blacksquare}(b)$ underlined for some $a$, and $c^{\blacksquare}(b)=0$}), \\ i\in\{1,2,3\}\subseteq\mathcal{S} & (\text{$c^{\blacksquare}(b)$ underlined for some $a$, and $\phi_{i}(b)=\blacksquare$}), \\ \lambda(b)\in\{\emptyset\}\cup\mathcal{N}\cup\mathcal{L}\cup\mathcal{T} & (\text{otherwise}), \end{cases}
\end{align*}
We obtain at the end new strings $\phi'_{i}$, new classes $C'_{i}:=\mu(\phi'_{i})\in\mathcal{C}_{n}$, and a new label function $\lambda'$.

Now we define $\theta_{6}$. All the restrictions we have imposed force the string triple to have a rigid structure: they allow us to break it down into substrings (between two consecutive occurrences of $c^{\blacksquare}(a)=3$) of a very particular shape. Write
\begin{equation}\label{eq:prop-sub}
\mathfrak{P}(\mathrm{sub}): \begin{cases}&\text{for all $a_{1}<a_{2}$ s.t.\ $c^{\blacksquare}(a_{1})=c^{\blacksquare}(a_{2})=3$} \\ &\text{and s.t.\ $c^{\blacksquare}(b)\neq 3$ for all $a_{1}<b<a_{2}$:} \\ &\text{$a_{2}-a_{1}\neq 2$, and for $a_{1}<b<a_{2}$ we have $c^{\blacksquare}(b)=0$} \\ &\text{for all but at most one $b=b_{0}$, and if such $b_{0}$ exists} \\ &\text{then $c^{\blacksquare}(b_{0})=2$, \,$\min\{b_{0}-a_{1},a_{2}-b_{0}\}\geq 2$,}  \\ &\text{and $\max\{b_{0}-a_{1},a_{2}-b_{0}\}\geq 5$.} \end{cases}
\end{equation}
Visually, each restriction defined by $a_{1}$ and $a_{2}$ can have only one of the two following shapes:
\begin{align}
\overbrace{\hphantom{mmmmmmmmmmn}}^{\text{either $=1$ or $\geq 3$}}\hphantom{mmmn}\overbrace{\hphantom{mmmmmmmmmmmmmmm}}^{\begin{aligned}\scriptsize\text{$\geq 2$ on each side of $b_{0}$,\ \ \ } \\ \scriptsize\text{ and $\geq 5$ on at least one side}\end{aligned}} \nonumber \\
\begin{aligned}
\tilde{P}_{n}: & & \ \ & & {\scriptstyle a_{1}} & &  & &  & &  & &  & & {\scriptstyle a_{2}} & &  & & {\scriptstyle a_{1}} & &  & &  & &  & & {\scriptstyle b_{0}} & &  & &  & & {\scriptstyle a_{2}} & \\
\phi_{1}: & & \ \ & & \blacksquare & & \square & & \square & & \square & & \square & & \blacksquare & &  & & \blacksquare & & \square & & \square & & \square & & \blacksquare & & \square & & \square & & \blacksquare & \\
\phi_{2}: & & \ \ & & \blacksquare & & \square & & \square & & \square & & \square & & \blacksquare & & \ \text{or} \ & & \blacksquare & & \square & & \square & & \square & & \blacksquare & & \square & & \square & & \blacksquare & \\
\phi_{3}: & & \ \ & & \blacksquare & & \square & & \square & & \square & & \square & & \blacksquare & &  & & \blacksquare & & \square & & \square & & \square & & \square & & \square & & \square & & \blacksquare & 
\end{aligned} \label{eq:graphicsend}
\end{align}

\begin{lemma}\label{le:theta6}
Let
\begin{align*}
\mathcal{X}_{6}(m,\delta):= \ & \left\{\, (\phi_{1},\phi_{2},\phi_{3},\lambda)\in\Phi_{m}^{\times 3}\times\Lambda_{m,\{\emptyset\}\cup\mathcal{N}\cup\mathcal{L}\cup\mathcal{T}\cup\mathcal{S}}\ \right| \\
 & \left. \text{if $C'_{i}:=\mu(\phi_{i})$ then $C'_{i}\in\mathcal{C}_{m}(\delta)$, $\nu_{C'_{1},C'_{2},C'_{3}}$ odd;} \right. \\
 & \left. \vphantom{\Phi_{m}^{\times 3}} \text{$\mathfrak{P}(\mathrm{labels})$, $\mathfrak{P}(\mathcal{N},1)$, $\mathfrak{P}(\mathcal{L},20)$, $\mathfrak{P}(\mathrm{sub})$} \,\right\},
\end{align*}
using \eqref{eq:cdelta}--\eqref{eq:nu}--\eqref{eq:prop-labels}--\eqref{eq:prop-n}--\eqref{eq:prop-lk}--\eqref{eq:prop-sub}, and let
\begin{align*}
\theta_{6} & :\mathcal{X}_{5}(n,\delta_{5})\rightarrow\mathcal{X}_{6}(n,7\delta_{5}), & \theta_{6}(\phi_{1},\phi_{2},\phi_{3},\lambda)=(\phi'_{1},\phi'_{2},\phi'_{3},\lambda'),
\end{align*}
following the construction described above.

Then $\theta_{6}$ is a well-defined injective function for all $\delta_{5}>0$ small enough and all $n$ large enough depending on $\delta_{5}$.
\end{lemma}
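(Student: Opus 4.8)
The plan is to prove the two halves of the statement in turn: that the construction sends $\mathcal{X}_5(n,\delta_5)$ into $\mathcal{X}_6(n,7\delta_5)$ (well-definedness), and that it can be undone (injectivity). I would dispose of injectivity first, since it is immediate. By construction $\phi'_i$ and $\phi_i$ agree, and $\lambda'=\lambda$, away from the half-positions underlined in \eqref{eq:graphicsstart}; at an underlined half-position $b$ one has $\lambda'(b)\in\mathcal{S}$, and the shutter label records exactly the lost information, namely $\lambda'(b)=0$ precisely when $(\phi_1,\phi_2,\phi_3)(b)=(\square,\square,\square)$ and $\lambda'(b)=i$ precisely when $\phi_i(b)=\blacksquare$ while $\phi_j(b)=\square$ for $j\neq i$, with $\lambda(b)=\emptyset$ in every such case (this last point is the observation made just before the definition of $\mathcal{S}$, using $\mathfrak{P}(\mathrm{labels})$ and $\mathfrak{P}(\mathcal{N},2)$ in $\mathcal{X}_5$). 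Hence $(\phi_1,\phi_2,\phi_3,\lambda)$ is recovered from $(\phi'_1,\phi'_2,\phi'_3,\lambda')$ by restoring, at each $b$ with $\lambda'(b)\in\mathcal{S}$, the symbol configuration dictated by the label and setting $\lambda(b):=\emptyset$, while keeping everything else unchanged; this gives injectivity.

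For well-definedness the crucial first step is to justify that \eqref{eq:graphicsstart} is exhaustive — every half-position with $c^{\blacksquare}=1$ lies in a window of one of the eight shapes — and that the windows attached to distinct maximal clusters of $c^{\blacksquare}=1$'s are pairwise disjoint, so the local modifications are independent. This is where one cashes in the structure accumulated in $\mathcal{X}_5$: $\mathfrak{P}(\mathcal{L})$ keeps any $c^{\blacksquare}=2$ at distance $\geq 2$ from a $c^{\blacksquare}=1$; $\mathfrak{P}(\mathrm{labels})$, $\mathfrak{P}(\blacksquare',27)$ and $\mathfrak{P}(\mathcal{T})$ keep any $c^{\blacksquare}=3$ at distance $\geq 3$ and, more precisely, force two half-positions carrying $c^{\blacksquare}\in\{1,2\}$ at distance $\leq 27$ to have their breaks in different strings, so a cluster can contain at most three $c^{\blacksquare}=1$'s and no four of them fit within distance $27$; $\mathfrak{P}(\mathcal{T}')$ controls the remaining interplay with $c^{\blacksquare}=2$. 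Enumerating, for a maximal cluster of one, two, or three $c^{\blacksquare}=1$'s, the admissible patterns of the neighbouring values (whose nearest nonzero entry is forced to be $2$ or $3$ and at the appropriate distance) leave exactly the eight shapes of \eqref{eq:graphicsstart}; the same spacing estimates show the windows are separated from one another and from every other break with $c^{\blacksquare}\geq 2$, and that all underlined positions carry $\lambda=\emptyset$, so the construction is well-posed.

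It remains to check the defining conditions of $\mathcal{X}_6(n,7\delta_5)$ for the output. The size bound is an application of Proposition~\ref{pr:gmcycles}: each window introduces only a bounded number of new breaks, so $c^{\blacksquare}(\phi'_i)$ is at most a bounded multiple of $c^{\blacksquare}(\phi_1)+c^{\blacksquare}(\phi_2)+c^{\blacksquare}(\phi_3)$, which via its parts \eqref{pr:gmcycles-1}--\eqref{pr:gmcycles-2} gives $C'_i\in\mathcal{C}_n(7\delta_5)$ for $n$ large. For $\nu$: in each window each $C_i$ has one cycle split into several of the correct total length, and the eight shapes are designed (this is the role of separating, e.g., the third and fourth cases) precisely so that the number of even-length cycles of every $C_i$ changes by an even amount, whence $\nu$ is preserved — a finite, mechanical verification. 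Condition $\mathfrak{P}(\mathrm{labels})$ holds because every new label is in $\mathcal{S}$ and sits at an underlined position, which by fiat now has $c^{\blacksquare}=3$, while the old labels and the $c^{\blacksquare}$-values at their locations are untouched; $\mathfrak{P}(\mathcal{N},1)$ and $\mathfrak{P}(\mathcal{L},20)$ follow from $\mathfrak{P}(\mathcal{N},2)$ and $\mathfrak{P}(\mathcal{L},21)$ in $\mathcal{X}_5$, since a newly created $\blacksquare$ can only approach such a label by the bounded extent of a window and $c^{\blacksquare}=1$ positions are already far from $\mathcal{N}$- and $\mathcal{L}$-labels; and $\mathfrak{P}(\mathrm{sub})$ holds because after the construction $c^{\blacksquare}\in\{0,2,3\}$ everywhere, so that between consecutive half-positions with $c^{\blacksquare}=3$ the surviving $c^{\blacksquare}=2$ positions — governed by (the images of) $\mathfrak{P}(\mathcal{T}')$ and $\mathfrak{P}(\mathcal{T})$ together with the inserted windows — realise exactly the two shapes of \eqref{eq:graphicsend}.

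I expect the main obstacle to be precisely the combinatorial bookkeeping of the first task: showing that \eqref{eq:graphicsstart} is exhaustive and that the windows are disjoint forces one to juggle $\mathfrak{P}(\mathrm{labels})$, $\mathfrak{P}(\blacksquare',27)$, $\mathfrak{P}(\mathcal{N},2)$, $\mathfrak{P}(\mathcal{L},21)$, $\mathfrak{P}(\mathcal{L})$, $\mathfrak{P}(\mathcal{T})$, $\mathfrak{P}(\mathcal{T}')$ at once and to keep track of which string carries each break; obtaining $\mathfrak{P}(\mathrm{sub})$ with the right length thresholds ($\neq 2$, and the $\geq 2$ versus $\geq 5$ asymmetry around a lone $c^{\blacksquare}=2$) is the same difficulty seen from the output side. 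The $\nu$-parity check, though it produces the most subcases, is routine once the eight shapes are fixed, and the remaining conditions amount to bookkeeping.
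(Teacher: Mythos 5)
Your overall route is the same as the paper's: dispose of injectivity by reading off the shutter labels, justify that \eqref{eq:graphicsstart} is exhaustive and that the windows are disjoint using the stock of properties in $\mathcal{X}_{5}$, bound $c^{\blacksquare}(\phi'_{i})$ to get the class-size condition via Proposition~\ref{pr:gmcycles}, and then verify the remaining properties of $\mathcal{X}_{6}$. Almost all of this is sound and matches the paper, including your (correct) decision to treat the exhaustiveness of \eqref{eq:graphicsstart} as part of well-definedness.

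There is, however, one concretely wrong claim in your $\nu$-verification: you assert that the eight shapes are designed so that ``the number of even-length cycles of every $C_{i}$ changes by an even amount, whence $\nu$ is preserved.'' This fails already in the first case of \eqref{eq:graphicsstart} with $x=y=0$: an isolated $a$ with $\phi_{1}(a)=\blacksquare$ leaves $C_{1}$ untouched but splits one cycle of each of $C_{2},C_{3}$ into two, and splitting an $\ell$-cycle into an $\ell'$-cycle and an $(\ell-\ell')$-cycle \emph{always} changes the number of even-length cycles by an odd amount (the two parts have equal parity iff $\ell$ is even). So in the 1st--4th cases exactly two of the three classes flip their membership in the alternating group, $\nu$ can genuinely change by $\pm 2$, and your invariant does not hold; what the construction actually guarantees is only that the number of flips is even, hence that the \emph{parity} of $\nu$ is preserved --- which is all the lemma needs. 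Since you explicitly rest the conclusion on the false invariant rather than on the parity statement, the ``finite, mechanical verification'' as you set it up would fail; it needs to be redone with the correct target (an even number of the $C_{i}$ change Alt-membership in each window), which is exactly the case-by-case computation the paper carries out. The rest of your outline --- in particular the acknowledged work needed for the $a_{2}-a_{1}\neq 2$ and $\max\{b_{0}-a_{1},a_{2}-b_{0}\}\geq 5$ thresholds in $\mathfrak{P}(\mathrm{sub})$ --- is consistent with the paper's argument.
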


\begin{proof}
By construction, $c^{\blacksquare}(\phi'_{i})\leq 2(c^{\blacksquare}(\phi_{1})+c^{\blacksquare}(\phi_{2})+c^{\blacksquare}(\phi_{3}))$ for each $i$: therefore, $C'_{i}\in\mathcal{C}_{n}(7\delta_{5})$ by Proposition~\ref{pr:gmcycles}\eqref{pr:gmcycles-1}--\eqref{pr:gmcycles-2}.

Let us consider the quantity $\nu$. Recall that in $\phi_{1},\phi_{2},\phi_{3}$ the half-positions with $c^{\blacksquare}(a)=1$ can only occur grouped as in \eqref{eq:graphicsstart}. Say that, if some $a$ are grouped together, we perform the process above on all the underlined half-positions in the group at once in a single step. We examine the cases of \eqref{eq:graphicsstart} one by one to see what happens to $\nu$; for ease of notation, say that in each case the first $a$ with $c^{\blacksquare}(a)=1$ has $\phi_{1}(a)=\blacksquare$, the second one (if any) has $\phi_{2}(a)=\blacksquare$, and the third one (if any) has $\phi_{3}(a)=\blacksquare$. The cycle structure from $C_{i}$ to $C'_{i}$ changes as follows:
\begin{align*}
\text{1st case:} & \begin{cases} \text{$C_{1}$: unchanged;} & \\ \text{$C_{2}$: $\ell_{2}$-cycle} & \rightarrow \ \text{$\{\ell'_{2},(\ell_{2}-\ell'_{2})\}$-cycles, both $\geq 2$;} \\ \text{$C_{3}$: $\ell_{3}$-cycle} & \rightarrow \ \text{$\{\ell'_{3},(\ell_{3}-\ell'_{3})\}$-cycles, both $\geq 2$.} \end{cases} \\
\text{2nd case:} & \begin{cases} \text{$C_{1}$: $\ell_{1}$-cycle} & \rightarrow \ \text{$\{1,(\ell_{1}-1)\}$-cycles, $\ell_{1}-1\geq 3$;} \\ \text{$C_{2}$: $\ell_{2}$-cycle} & \rightarrow \ \text{$\{1,(\ell_{2}-1)\}$-cycles, $\ell_{2}-1\geq 3$;} \\ \text{$C_{3}$: $\ell_{3}$-cycle} & \rightarrow \ \text{$\{1,\ell'_{3},(\ell_{3}-\ell'_{3}-1)\}$-cycles, last two $\geq 3$.} \end{cases} \\
\text{3rd case:} & \begin{cases} \text{$C_{1}$: $\ell_{1}$-cycle} & \!\!\!\rightarrow\text{$\{1,1,1,(\ell_{1}\!-\!3)\}$-cycles, $\ell_{1}\!-\!3\!\geq\!3$;} \\ \text{$C_{2}$: $\{\ell_{21},\ell_{22}\}$-cycles} & \!\!\!\rightarrow\text{$\{1,1,1,(\ell_{21}\!-\!2),(\ell_{22}\!-\!1)\}$-cycles, last two $\!\geq\!3$;} \\ \text{$C_{3}$: $\ell_{3}$-cycle} & \!\!\!\rightarrow\text{$\{1,1,1,\ell'_{3},(\ell_{3}\!-\!\ell'_{3}\!-\!3)\}$-cycles, last two $\!\geq\!3$.} \end{cases} \\
\text{4th case:} & \begin{cases} \text{$C_{1}$: $\{\ell_{11},\ell_{12}\}$-cycles} & \!\!\!\rightarrow\text{$\{1,1,1,(\ell_{11}\!-\!1),(\ell_{12}\!-\!2)\}$-cycles, last two $\!\geq\!3$;} \\ \text{$C_{2}$: $\ell_{2}$-cycle} & \!\!\!\rightarrow\text{$\{1,1,1,(\ell_{2}\!-\!3)\}$-cycles, $\ell_{2}\!-\!3\!\geq\!3$;} \\ \text{$C_{3}$: $\ell_{3}$-cycle} & \!\!\!\rightarrow\text{$\{1,1,1,\ell'_{3},(\ell_{3}\!-\!\ell'_{3}\!-\!3)\}$-cycles, last two $\!\geq\!3$.} \end{cases} \\
\text{5th case:} & \begin{cases} \text{$C_{1}$: $\ell_{1}$-cycle} & \!\!\!\rightarrow \ \text{$\{1,1,(\ell_{1}\!-\!2)\}$-cycles, $\ell_{1}\!-\!2\geq 4$;} \\ \text{$C_{2}$: $\{\ell_{21},\ell_{22}\}$-cycles} & \!\!\!\rightarrow \ \text{$\{1,1,(\ell_{21}\!-\!1),(\ell_{22}\!-\!1)\}$-cycles, last two $\geq 4$;} \\ \text{$C_{3}$: $\ell_{3}$-cycle} & \!\!\!\rightarrow \ \text{$\{1,1,(\ell_{3}\!-\!2)\}$-cycles, $\ell_{3}\!-\!2\geq 4$.} \end{cases} \\
\text{6th case:} & \begin{cases} \text{$C_{1}$: $\ell_{1}$-cycle} & \!\!\!\rightarrow\text{$\{1,\!1,\!1,\!1,(\ell_{1}\!-\!4)\}$-cycles, $\ell_{1}\!-\!4\!\geq\!3$;} \\ \text{$C_{2}$: $\{\ell_{21},\ell_{22}\}$-cycles} & \!\!\!\rightarrow\text{$\{1,\!1,\!1,\!1,(\ell_{21}\!-\!1),(\ell_{22}\!-\!3)\}$-cycles,\! last\! two $\!\geq\!3$;} \\ \text{$C_{3}$: $\{\ell_{31},\ell_{32}\}$-cycles} & \!\!\!\rightarrow\text{$\{1,\!1,\!1,\!1,(\ell_{31}\!-\!3),(\ell_{32}\!-\!1)\}$-cycles,\! last\! two $\!\geq\!3$.} \end{cases} \\
\text{7th case:} & \begin{cases} \text{$C_{1}$: $\{\ell_{11},\ell_{12}\}$-cycles} & \!\!\!\rightarrow\text{$\{1,\!1,\!1,\!1,(\ell_{11}\!-\!1),(\ell_{12}\!-\!3)\}$-cycles,\! last\! two $\!\geq\!3$;} \\ \text{$C_{2}$: $\{\ell_{21},\ell_{22}\}$-cycles} & \!\!\!\rightarrow\text{$\{1,\!1,\!1,\!1,(\ell_{21}\!-\!3),(\ell_{22}\!-\!1)\}$-cycles,\! last\! two $\!\geq\!3$;} \\ \text{$C_{3}$: $\ell_{3}$-cycle} & \!\!\!\rightarrow\text{$\{1,\!1,\!1,\!1,(\ell_{3}\!-\!4)\}$-cycles, $\ell_{3}\!-\!4\!\geq\!3$.} \end{cases} \\
\text{8th case:} & \begin{cases} \text{$C_{1}$: $\ell_{1}$-cycle} & \!\!\!\rightarrow\text{$\{1,\!1,\!1,\!1,(\ell_{1}\!-\!4)\}$-cycles, $\ell_{1}\!-\!4\!\geq\!3$;} \\ \text{$C_{2}$: $\{\ell_{21},\ell_{22}\}$-cycles} & \!\!\!\rightarrow\text{$\{1,\!1,\!1,\!1,(\ell_{21}\!-\!2),(\ell_{22}\!-\!2)\}$-cycles,\! last\! two $\!\geq\!3$;} \\ \text{$C_{3}$: $\ell_{3}$-cycle} & \!\!\!\rightarrow\text{$\{1,\!1,\!1,\!1,(\ell_{3}\!-\!4)\}$-cycles, $\ell_{3}\!-\!4\!\geq\!3$.} \end{cases}
\end{align*}
In the 1st, 2nd, 3rd, and 4th cases we have $|\nu_{C_{1},C_{2},C_{3}}-\nu_{C'_{1},C'_{2},C'_{3}}|=2$, whereas in the 5th, 6th, 7th, and 8th cases we have $\nu_{C_{1},C_{2},C_{3}}=\nu_{C'_{1},C'_{2},C'_{3}}$. In either situation, $\nu_{C'_{1},C'_{2},C'_{3}}$ is odd.

We still have $\mathfrak{P}(\mathrm{labels})$. Since we changed only half-positions $b$ at distance $\leq 1$ from some $a$ with $c^{\blacksquare}(a)=1$, we have $\mathfrak{P}(\mathcal{N},1)$ and $\mathfrak{P}(\mathcal{L},20)$. For $\phi'_{1},\phi'_{2},\phi'_{3}$ there are no more $a$ with $c^{\blacksquare}(a)=1$ and, since those that existed for $\phi_{1},\phi_{2},\phi_{3}$ have been changed to $c^{\blacksquare}(a)=3$, by $\mathfrak{P}(\mathcal{T}')$ any two consecutive $b$ with $c^{\blacksquare}(b)=2$ must be separated by some $a$ with $c^{\blacksquare}(a)=3$. Thus, we have reached the situation of \eqref{eq:graphicsend}, apart from the distance bounds. The fact that $a_{2}-a_{1}\neq 2$ on the left side of \eqref{eq:graphicsend} comes from our construction: the first case of \eqref{eq:graphicsstart} creates cycles of length $2$ only for either $x=2$ or $y=2$, therefore such cycles cannot be part of a restriction as in the left side of \eqref{eq:graphicsend}; the other cases of \eqref{eq:graphicsstart} never create cycles of length $2$. The bound on $\min\{b_{0}-a_{1},a_{2}-b_{0}\}$ on the right side of \eqref{eq:graphicsend} comes from $\mathfrak{P}(\mathcal{L})$ in $\mathcal{X}_{5}$. As for the last bound, assume that we have $\max\{b_{0}-a_{1},a_{2}-b_{0}\}<5$: for $\phi_{1},\phi_{2},\phi_{3}$, we have $c^{\blacksquare}(a_{j})\neq 3$ by $\mathfrak{P}(\blacksquare',26)$ and $\mathfrak{P}(\mathcal{T})$, whereas $c^{\blacksquare}(a_{j})=1$ is only possible when $\phi_{i_{1}}(a_{j})=\phi_{i_{2}}(b_{0})=\phi_{i_{3}}(b_{0})=\blacksquare$ for distinct $i_{1},i_{2},i_{3}$ because of $\mathfrak{P}(\blacksquare',26)$; but then the same $i_{1}$ must be used for both $a_{1},a_{2}$, contradicting $\mathfrak{P}(\blacksquare',26)$. Therefore, $\mathfrak{P}(\mathrm{sub})$ holds.

Showing injectivity by inverting $\theta_{6}$ is again easy: for $(\phi'_{1},\phi'_{2},\phi'_{3},\lambda')(b)=(\blacksquare,\blacksquare,\blacksquare,1)$, put $(\phi_{1},\phi_{2},\phi_{3},\lambda)(b)=(\blacksquare,\square,\square,\emptyset)$; when $\lambda'(b)=2$ and $\lambda'(b)=3$ move the $\blacksquare$ appropriately, and put three $\square$ when $\lambda'(b)=0$.
\end{proof}

\begin{reduction}\label{re:6}
Find, for any $(\phi_{1},\phi_{2},\phi_{3},\lambda)\in\mathcal{X}_{6}(n,\delta_{6})$, a solution septuple $(\eta_{1},\eta_{2},\eta_{3},\phi_{1},\phi_{2},\phi_{3},\lambda)$ such that $\alpha_{1}\alpha_{2}=\alpha_{3}$ for $\alpha_{i}=\mu(\eta_{i},\phi_{i})$. Furthermore, the solution must be aligned.
\end{reduction}

\begin{lemma}\label{le:6to5}
If there is a solution for Reduction~\ref{re:6}, for all $\delta_{6}>0$ small enough and all $n$ large enough depending on $\delta_{6}$, then there is a solution for Reduction~\ref{re:5}, for all $\delta_{5}>0$ small enough and all $n$ large enough depending on $\delta_{5}$.
\end{lemma}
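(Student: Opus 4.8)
Following the template of Lemmas~\ref{le:3to2}--\ref{le:5to4}: by Lemma~\ref{le:theta6}, every string triple of $\mathcal{X}_5:=\mathcal{X}_5(n,\delta_5)$ is the unique $\theta_6$-preimage of some $(\phi_1,\phi_2,\phi_3,\lambda)\in\mathcal{X}_6:=\mathcal{X}_6(n,7\delta_5)$; for this I would take an aligned solution $(\eta_1,\eta_2,\eta_3,\phi_1,\phi_2,\phi_3,\lambda)$ for Reduction~\ref{re:6} and walk the construction of \S\ref{se:re6} back, processing the runs of half-positions carrying shutter labels from right to left and transforming the $\eta_i$ (while the $\phi_i$ lose exactly the $\blacksquare$'s that $\theta_6^{-1}$ removes) into an aligned solution for Reduction~\ref{re:5} on $\theta_6^{-1}(\phi_1,\phi_2,\phi_3,\lambda)$. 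Since the resulting $\alpha_i'=\mu(\eta_i',\phi_i')$ will automatically have the cycle structure of $\mu(\phi_i')$ dictated by $\theta_6^{-1}$, only two things must be secured at each step: the equality $\alpha_1'\alpha_2'=\alpha_3'$, and alignment.

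The engine of the undoing is the ``easy'' direction anticipated in the footnote to Reduction~\ref{re:5}, namely gluing cycles back together. On the $\mathcal{X}_6$ side every shutter-labelled half-position $a$ carries a break in all three strings, and alignment supplies, via $\mathfrak{C}_1$, a common value in the three cycles ending at $a$, a common value in the three cycles starting at $a$, and more generally a common value in every triple of cycles sharing a position (in particular in the triples of $1$-cycles lying between consecutive breaks). Two elementary moves exhaust the undoing. First, to glue the cycle ending at $a$ to the one starting at $a$ in precisely two of the strings while the third keeps its break: with $r,s$ the two common values above, multiply the two gluing strings by $(r\ s)$, one on the left and one on the right in the pattern fixed by which index is kept (e.g.\ keeping string $3$: $\alpha_1'=\alpha_1(r\ s)$, $\alpha_2'=(r\ s)\alpha_2$, $\alpha_3'=\alpha_3$); one checks that $\alpha_1'\alpha_2'=\alpha_3'$ is automatic and that left- or right-multiplication by $(r\ s)$ merges the intended two cycles and nothing else, up to cycling around. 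Second, to glue a whole chain of consecutive cycles into one cycle in all three strings at once: left-multiply two of the strings by a single long cycle through the relevant common values and compensate on the third, exactly as in \eqref{eq:5to4} but with a longer cycle in place of $(r\ t\ s)$; the point that makes this robust, even when several cycles in the chain are $1$-cycles, is that such a chain always contains a cycle of length $\geq 2$ (the cycles of $\theta_6^{-1}$ abutting a shutter run reach beyond the run, which in $\mathcal{X}_5$ is flanked by $\square$'s), so the long cycle routes through genuine cycle interiors rather than collapsing.

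With these moves I would run through the eight shapes of \eqref{eq:graphicsstart}. Each, post-$\theta_6$, is a short run of consecutive breaks in all three strings, the shutter labels recording which breaks are kept (in exactly one string) and which are removed (in all three), and undoing the shape is a composition of the two elementary moves, performed in a suitable order (outermost removals first, so that $1$-cycles get absorbed into neighbouring long cycles before being glued further). For each shape one checks that the prescribed merges in each string group into chains each containing a cycle of length $\geq 2$, so no move ever glues two $1$-cycles in isolation, and then one reads off the explicit transpositions and long cycles of common values, just as \eqref{eq:5to4} handles the single-$1$-cycle chain. This case-by-case verification is elementary but the bulk of the work.

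It remains to check the output is aligned. Triples of cycles disjoint from every merged cycle are unchanged up to cycling around, under which all conditions of Definition~\ref{de:aligned} are invariant; a triple meeting a merged cycle only sees value sets that have grown, so any value witnessing $\mathfrak{C}_1$ or a stronger $\mathfrak{C}$-condition before still does; and the genuinely new triples --- those straddling a removed break, all of whose cycles were merged there --- satisfy $\mathfrak{C}_1$ through the common value $r$ (or $s$) used to glue. The stronger conditions of Definition~\ref{de:aligned}\eqref{de:aligned-all} near $\mathcal{L}$-labels are untouched because $\mathfrak{P}(\mathcal{L},21)$ in $\mathcal{X}_5$ keeps every shutter modification more than $21$ positions from any $\mathcal{L}$-label, so a merged cycle can abut an $\mathcal{L}$-labelled break only on its far end, which again amounts to cycling that cycle around; and $\mathcal{X}_5$ has no $\mathcal{P}$-labels, so the remaining conditions are vacuous. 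The main obstacle is the bookkeeping of the left/right multiplications across all eight shapes while keeping both the product equation and the common values valid; no individual step is harder than the model computation \eqref{eq:5to4}.
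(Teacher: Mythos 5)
Your overall architecture is the paper's: invert $\theta_{6}$ via Lemma~\ref{le:theta6}, process the shutter configurations from right to left, use the common values supplied by $\mathfrak{C}_{1}$ to glue cycles by explicit left/right multiplications, and then check alignment. Your first elementary move is exactly the paper's first step \eqref{eq:6to5remove1}, including the correct left/right pattern for each retained index, and your alignment discussion is in the right direction (the paper is more careful about Definition~\ref{de:aligned}\eqref{de:aligned-disj}, but the ideas match).

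The gap is in your second move. After $\theta_{6}$, the label-$0$ half-positions come in pairs $b,b+2$ with a label-$j$ half-position at $b+1$ in between (this is visible in every shape of \eqref{eq:graphicsstart}), and undoing them requires a move that, simultaneously, merges $4$ cycles into $2$ in the string that keeps its break at $b+1$ and merges $3$ cycles into $1$ in the other two strings. This is not ``gluing a whole chain into one cycle in all three strings at once'', and it is not a composition of such gluings with your first move: removing a \emph{single} break in all three strings is impossible, since it flips the sign of each $\alpha_{i}$ and hence breaks $\operatorname{sgn}(\alpha_{1})\operatorname{sgn}(\alpha_{2})=\operatorname{sgn}(\alpha_{3})$, so your ``outermost removals first'' ordering cannot peel off the breaks at $b$ and $b+2$ one at a time; and applying the all-three-strings chain gluing across the whole run would wrongly erase the break at $b+1$ in the string that must keep it. The required move is the paper's \eqref{eq:6to5remove0}: a pair $(g,h)$ with $\alpha''_{1}\mapsto g\alpha''_{1}$, $\alpha''_{2}\mapsto\alpha''_{2}h$, $\alpha''_{3}\mapsto g\alpha''_{3}h$, where $g$ and $h$ must be chosen so that three \emph{different} prescribed merge patterns come out simultaneously (e.g.\ $g=(r\ s)(t\ u)$, $h=(r\ t\ u)$ when $j=1$). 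Finding such $g,h$ is a small but genuinely constrained computation --- natural first guesses such as $g=h=(r\ s)(t\ u)$ fail because they conjugate $\alpha''_{3}$ and preserve its cycle type --- and it is the one ingredient your toolkit does not contain. Everything else in your plan goes through once this move is supplied.
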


\begin{proof}
By Lemma~\ref{le:theta6}, every string triple of $\mathcal{X}_{5}:=\mathcal{X}_{5}(n,\delta_{5})$ is the unique preimage of some $(\phi_{1},\phi_{2},\phi_{3},\lambda)\in\mathcal{X}_{6}:=\mathcal{X}_{6}(n,7\delta_{5})$ via $\theta_{6}$. Let $(\eta_{1},\eta_{2},\eta_{3},\phi_{1},\phi_{2},\phi_{3},\lambda)$ be a solution for Reduction~\ref{re:6}. We need to produce $\eta'_{i}$ such that the septuple $(\eta'_{1},\eta'_{2},\eta'_{3},\phi'_{1},\phi'_{2},\phi'_{3},\lambda')$ is the required solution for Reduction~\ref{re:5}, where we set $(\phi'_{1},\phi'_{2},\phi'_{3},\lambda')=\theta_{6}^{-1}(\phi_{1},\phi_{2},\phi_{3},\lambda)$.

The half-positions $a\in\tilde{P}_{n}$ having $c^{\blacksquare}(a)=1$ in a string triple of $\mathcal{X}_{5}$ must appear in one of the configurations of \eqref{eq:graphicsstart}, and by the process above we have $\lambda'(b)\in\mathcal{S}$ exactly for those $b\in\tilde{P}_{n}$ that appear underlined in one of such configurations. Each configuration is made of: at most three $a$ with $c^{\blacksquare}(a)=1$; either zero or exactly two $b$ with $c^{\blacksquare}(b)=0$, and in the latter case they are at distance $2$ from each other with one $a$ having $c^{\blacksquare}(a)=1$ between them.

To find the $\eta'_{i}$, we work with each configuration separately, starting from the rightmost one and moving left, and inside each configuration we divide the process into two steps. First, we deal with all the $a$ for which the $\phi'_{i}$ have $c^{\blacksquare}(a)=1$, again from right to left. Fix one such $a$, so that $\lambda(a)\in\{1,2,3\}$. The cycles before and after $a$ in the $\phi_{i}$ share $\geq 1$ common positions, so by $\mathfrak{C}_{1}$ they have a common value: say $r$ for the cycles before $a$, and $s$ for the cycles after $a$. Write $\alpha_{i}=\vec{\beta}_{i}(\vec{\rho}_{i}\ r)(s\ \vec{\sigma}_{i})$. If $\lambda(a)=j$, define $\alpha_{i}^{(j)}$ as
\begin{align}\label{eq:6to5remove1}
\begin{aligned}
\alpha^{(1)}_{1} & \!=\alpha_{1}, & \alpha^{(1)}_{2} & \!=\alpha_{2}(r\, s)=\vec{\beta}_{2}(\vec{\rho}_{2}\, s\, \vec{\sigma}_{2}\, r), & \alpha^{(1)}_{3} & \!=\alpha_{3}(r\, s)=\vec{\beta}_{3}(\vec{\rho}_{3}\, s\, \vec{\sigma}_{3}\, r), \\
\alpha^{(2)}_{2} & \!=\alpha_{2}, & \alpha^{(2)}_{1} & \!=(r\, s)\alpha_{1}=\vec{\beta}_{1}(\vec{\rho}_{1}\, r\, \vec{\sigma}_{1}\, s), & \alpha^{(2)}_{3} & \!=(r\, s)\alpha_{3}=\vec{\beta}_{3}(\vec{\rho}_{3}\, r\, \vec{\sigma}_{3}\, s), \\
\alpha^{(3)}_{3} & \!=\alpha_{3}, & \alpha^{(3)}_{1} & \!=\alpha_{1}(r\, s)=\vec{\beta}_{1}(\vec{\rho}_{1}\, s\, \vec{\sigma}_{1}\, r), & \alpha^{(3)}_{2} & \!=(r\, s)\alpha_{2}=\vec{\beta}_{2}(\vec{\rho}_{2}\, r\, \vec{\sigma}_{2}\, s),
\end{aligned}
\end{align}
and define $\lambda^{(j)}$ by changing only the value of $\lambda$ at $a$, setting instead $\lambda^{(j)}(a)=\emptyset$. In all cases $\alpha^{(j)}_{1}\alpha^{(j)}_{2}=\alpha^{(j)}_{3}$, and the resulting $(\eta^{(j)}_{1},\eta^{(j)}_{2},\eta^{(j)}_{3},\phi^{(j)}_{1},\phi^{(j)}_{2},\phi^{(j)}_{3},\lambda^{(j)})$ is a solution of the problem obtained by removing the cycle breaks at $a$ in the $j'$-th and $j''$-th strings with $j',j''\neq j$. Repeat the procedure for all $a$ with $c^{\blacksquare}(a)=1$ in $\phi'_{i}$, and denote by $(\eta''_{1},\eta''_{2},\eta''_{3},\phi''_{1},\phi''_{2},\phi''_{3},\lambda'')$ the solution at the end of this first step.

Let us prove that the intermediate solution above is aligned. Again, we just have to show that \eqref{eq:6to5remove1} is aligned for each $a$. Consider any triple of cycles $\gamma_{i}$ in $(\eta'_{i},\phi'_{i})$. If it satisfies one of the hypotheses of Definition~\ref{de:aligned}\eqref{de:aligned-al1}--\eqref{de:aligned-all}, there is a triple of cycles $\gamma^{*}_{i}$ in $(\eta_{i},\phi_{i})$ satisfying the same hypotheses and such that either $\gamma_{i}=\gamma^{*}_{i}$ or one of six possibilities happens: $(i,j)=(1,2)$, $\gamma_{1}=(\vec{\rho}_{1}\ r\ \vec{\sigma}_{1}\ s)$, and $\gamma^{*}_{1}\in\{(\vec{\rho}_{1}\ r),(s\ \vec{\sigma}_{1})\}$, and analogously for the other five values of $(i,j)$ with $i\neq j$. By Definition~\ref{de:aligned}\eqref{de:aligned-disj}, since the triples of $(\vec{\rho}_{i}\ r)$ and of $(s\ \vec{\sigma}_{i})$ use the values $r,s$ for their $\mathfrak{C}_{1}$, all other triples of cycles in the element strings $(\eta_{i},\phi_{i})$ must use other values. Moreover, every two consecutive values in $\vec{\rho}_{i}$ or $\vec{\sigma}_{i}$ are still consecutive in the $\gamma_{i}$ containing them. Therefore, all properties that do not involve $r,s$ pass to the corresponding triples in $(\eta'_{i},\phi'_{i})$. Furthermore, the property $\mathfrak{C}_{1}$ holds with $z_{1}=r$ and $z_{1}=s$ in the two triples not already fully considered. Hence, the solution resulting from \eqref{eq:6to5remove1} is aligned, and so is $(\eta''_{1},\eta''_{2},\eta''_{3},\phi''_{1},\phi''_{2},\phi''_{3},\lambda'')$.

In the second step, we deal with pairs $b,b+2\in\tilde{P}_{n}$ with $\lambda''(b)=\lambda''(b+2)=0$. After the first step, we must have $c^{\blacksquare}(b+1)=1$ for the $\phi''_{i}$, i.e.\ $\phi''_{j}(b+1)=\blacksquare$ for exactly one $j$. Using $\mathfrak{C}_{1}$, we can fix common values $r,s,t,u$ and write $\alpha''_{j}=\vec{\beta}_{j}(\vec{\rho}_{j}\ r)(s)(t)(u\ \vec{\sigma}_{j})$ and $\alpha''_{i}=\vec{\beta}_{i}(\vec{\rho}_{i}\ r)(s\ t)(u\ \vec{\sigma}_{i})$ for $i\neq j$. Define ${\alpha''_{i}}^{(j)}$ as
\begin{align}\label{eq:6to5remove0}
\begin{aligned}
{\alpha''_{1}}^{(1)} & =(r\ s)(t\ u)\alpha''_{1}=(\vec{\rho}_{1}\ r\ s)(u\ t\ \vec{\sigma}_{1}), \\
{\alpha''_{2}}^{(1)} & =\alpha''_{2}(r\ t\ u)=(\vec{\rho}_{2}\ t\ s\ u\ \vec{\sigma}_{2}\ r), \\
{\alpha''_{3}}^{(1)} & =(r\ s)(t\ u)\alpha''_{3}(r\ t\ u)=(\vec{\rho}_{3}\ t\ \vec{\sigma}_{3}\ r\ u\ s), \\
{\alpha''_{1}}^{(2)} & =(r\ u\ t)\alpha''_{1}=(\vec{\rho}_{1}\ r\ \vec{\sigma}_{1}\ u\ s\ t), \\
{\alpha''_{2}}^{(2)} & =\alpha''_{2}(r\ s)(t\ u)=(\vec{\rho}_{2}\ s\ r)(t\ u\ \vec{\sigma}_{2}), \\
{\alpha''_{3}}^{(2)} & =(r\ u\ t)\alpha''_{3}(r\ s)(t\ u)=(\vec{\rho}_{3}\ s\ u\ r\ \vec{\sigma}_{3}\ t), \\
{\alpha''_{1}}^{(3)} & =(r\ s\ u)\alpha''_{1}=(\vec{\rho}_{1}\ r\ t\ s\ \vec{\sigma}_{1}\ u), \\
{\alpha''_{2}}^{(3)} & =\alpha''_{2}(r\ t\ u)=(\vec{\rho}_{2}\ t\ s\ u\ \vec{\sigma}_{2}\ r), \\
{\alpha''_{3}}^{(3)} & =(r\ s\ u)\alpha''_{3}(r\ t\ u)=(\vec{\rho}_{3}\ t\ u)(r\ s\ \vec{\sigma}_{3}),
\end{aligned}
\end{align}
and define ${\lambda''}^{(j)}$ by setting ${\lambda''}^{(j)}(b)={\lambda''}^{(j)}(b+2)=\emptyset$. In all cases ${\alpha''_{1}}^{(j)}{\alpha''_{2}}^{(j)}={\alpha''_{3}}^{(j)}$, and $({\eta''_{1}}^{(j)},{\eta''_{2}}^{(j)},{\eta''_{3}}^{(j)},{\phi''_{1}}^{(j)},{\phi''_{1}}^{(j)},{\phi''_{1}}^{(j)},{\lambda''}^{(j)})$ is a solution of the problem obtained by removing all the cycle breaks at $b,b+2$. Repeat the procedure for all pairs $b,b+2$, and the final $(\eta'_{1},\eta'_{2},\eta'_{3},\phi'_{1},\phi'_{2},\phi'_{3},\lambda')$ is a solution for Reduction~\ref{re:5}.

To prove that each $({\eta''_{1}}^{(j)},{\eta''_{2}}^{(j)},{\eta''_{3}}^{(j)},{\phi''_{1}}^{(j)},{\phi''_{1}}^{(j)},{\phi''_{1}}^{(j)},{\lambda''}^{(j)})$ is aligned, so that the final solution will be aligned as well, the process is analogous to what we did for the first step. Again, every property except the $\mathfrak{C}_{1}$ that uses $r,s,t,u$ is preserved in the process, and the two values $r,t$ prove that $\mathfrak{C}_{1}$ holds in the remaining triples.
\end{proof}

Effectively, $\mathfrak{P}(\mathrm{sub})$ divides the original problem in $\mathrm{Sym}(n)$ into smaller subproblems, one for each $\mathrm{Sym}(a_{2}-a_{1})$. Each of these individual subproblems has solutions that are easy to find, up to one more technicality: although $\nu_{C_{1},C_{2},C_{3}}$ is odd, not every restricted $\nu$ has to be odd (in which case there would be no solution at all).

\section{Reduction~\ref{re:7}: subproblems have $\nu$ odd}\label{se:re7}

In the seventh and final reduction, we correct the value of $\nu$ for each of the subproblems by shaving off one point where needed.

Let $(\phi_{1},\phi_{2},\phi_{3},\lambda)\in\mathcal{X}_{6}(n,\delta_{6})$, and let $\frac{1}{2}=a_{0}<a_{1}<\ldots<a_{l}=n+\frac{1}{2}$ be the half-positions with $c^{\blacksquare}(a_{j})=3$. By $\mathfrak{P}(\mathrm{sub})$ in $\mathcal{X}_{6}$, if for a given $j$ there is some $x$ with $a_{j-1}<x<a_{j}$ and  $c^{\blacksquare}(x)=2$ then $\max\{a_{j}-x,x-a_{j-1}\}\geq 5$. Define
\begin{equation}\label{eq:defbj}
b_{j}:=\begin{cases} a_{j}-1 & (\text{there is no $x$}), \\  a_{j}-1 & (a_{j}-x\geq x-a_{j-1}), \\ a_{j-1}+1 & (a_{j}-x<x-a_{j-1}), \end{cases}
\end{equation}
so that $b_{j}$ marks an extreme of the interval $[a_{j-1}+1,a_{j}-1]$ that does not sit within distance $2$ from a half-position $b'$ with $c^{\blacksquare}(b')=2$; since $\mathfrak{P}(\mathcal{N},1)$ holds in $\mathcal{X}_{6}$, we must have $\lambda(b_{j})=\emptyset$. Call $\phi_{i}|_{j}\in\Phi_{a_{j}-a_{j-1}}$ the restriction of $\phi_{i}$ to the interval $[a_{j-1},a_{j}]$, and call $C_{i}|_{j}:=\mu(\phi_{i}|_{j})\in\mathcal{C}_{a_{j}-a_{j-1}}$. Since $\nu_{C_{1},C_{2},C_{3}}$ is odd, there is an even number of indices $j$ such that $\nu_{j}:=\nu_{C_{1}|_{j},C_{2}|_{j},C_{3}|_{j}}$ is not odd. Let $J$ be the set of such $j$. Let $\mathcal{P}=\{\spadesuit\}$ be the set of the (unique) \textit{parity label} $\spadesuit$. Define new strings $\phi'_{i}\in\Phi_{n}$ and a new label function $\lambda':\tilde{P}_{n}\rightarrow\{\emptyset\}\cup\mathcal{N}\cup\mathcal{L}\cup\mathcal{T}\cup\mathcal{S}\cup\mathcal{P}$ by
\begin{align*}
\phi'_{i}(b) & =\begin{cases} \blacksquare & (b=b_{j},\, j\in J), \\ \phi_{i}(b) & (\text{otherwise}), \end{cases} & \lambda'(b) & =\begin{cases} \spadesuit\in\mathcal{P} & (b=b_{j},\, j\in J), \\ \lambda(b)\notin\mathcal{P} & (\text{otherwise}). \end{cases}
\end{align*}

Now we define $\theta_{7}$. Write
\begin{align}
\mathfrak{P}(\mathcal{L}',k): & \begin{cases}&\text{for all $a$, if $\lambda(a)\in\mathcal{L}$ then $c^{\blacksquare}(a+r)=0$ for all $1\leq|r|\leq k$,} \\ &\text{except possibly when $|r|=1$ and $\lambda(a+r)=\spadesuit$.} \end{cases} \label{eq:prop-lprime} \\
\mathfrak{P}(\mathrm{sub}'): & \begin{cases}&\text{for all $a_{1}<a_{2}$ s.t.\ $c^{\blacksquare}(a_{1})=c^{\blacksquare}(a_{2})=3$} \\ &\text{and s.t.\ $c^{\blacksquare}(b)\neq 3$ for all $a_{1}<b<a_{2}$:} \\ &\text{$a_{2}-a_{1}\neq 2$, and for $a_{1}<b<a_{2}$ we have $c^{\blacksquare}(b)=0$} \\ &\text{for all but at most one $b=b_{0}$, and if such $b_{0}$ exists} \\ &\text{then $c^{\blacksquare}(b_{0})=2$, \,$\min\{b_{0}-a_{1},a_{2}-b_{0}\}\geq 2$, \,$a_{2}-a_{1}\geq 7$}  \\ &\text{and $\lambda(a_{i})\notin\mathcal{P}$ whenever $|b_{0}-a_{i}|\leq 3$.} \end{cases} \label{eq:prop-subprime} \\
\mathfrak{P}(\mathcal{P}): & \begin{cases}&\text{for all $a_{1}<a_{2}$ s.t.\ $c^{\blacksquare}(a_{1})=c^{\blacksquare}(a_{2})=3$,} \\ &\text{for $\overline{C}_{i}$ the restriction of $C_{i}$ to $[a_{1},a_{2}]$,} \\ &\text{$\nu_{\overline{C}_{1},\overline{C}_{2},\overline{C}_{3}}$ is odd.} \end{cases} \label{eq:prop-p} \\
\mathfrak{P}(\mathcal{P}'): & \begin{cases}&\text{for all $a_{1}<a_{2}$ s.t.\ $\lambda(a_{1})=\lambda(a_{2})=\spadesuit$,} \\ &\text{there is $a_{1}<b<a_{2}$ with $c^{\blacksquare}(b)=3$ and $\lambda(b)\neq\spadesuit$.} \end{cases} \label{eq:prop-pprime}
\end{align}
Visually, instead of \eqref{eq:graphicsend}, $\mathfrak{P}(\mathrm{sub}')$ has
\begin{align}
\overbrace{\hphantom{mmmmmmmmmmn}}^{\text{either $=1$ or $\geq 3$}}\hphantom{mmmn}\overbrace{\hphantom{mmmmmmmmmmmmmmm}}^{\begin{aligned}\scriptsize\text{$\geq 2$ on each side, $\geq 7$ overall\ \ \ } \\ \scriptsize\text{if $\leq 3$ on one side then $\not\in\mathcal{P}$ there}\end{aligned}} \nonumber \\
\begin{aligned}
\tilde{P}_{n}: & & \ \ & & {\scriptstyle a_{1}} & &  & &  & &  & &  & & {\scriptstyle a_{2}} & &  & & {\scriptstyle a_{1}} & &  & &  & &  & & {\scriptstyle b_{0}} & &  & &  & & {\scriptstyle a_{2}} & \\
\phi_{1}: & & \ \ & & \blacksquare & & \square & & \square & & \square & & \square & & \blacksquare & &  & & \blacksquare & & \square & & \square & & \square & & \blacksquare & & \square & & \square & & \blacksquare & \\
\phi_{2}: & & \ \ & & \blacksquare & & \square & & \square & & \square & & \square & & \blacksquare & & \ \text{or} \ & & \blacksquare & & \square & & \square & & \square & & \blacksquare & & \square & & \square & & \blacksquare & \\
\phi_{3}: & & \ \ & & \blacksquare & & \square & & \square & & \square & & \square & & \blacksquare & &  & & \blacksquare & & \square & & \square & & \square & & \square & & \square & & \square & & \blacksquare & 
\end{aligned} \label{eq:graphicsendprime}
\end{align}

\begin{lemma}\label{le:theta7}
Let
\begin{align*}
\mathcal{X}_{7}(m,\delta):= \ & \left\{\, (\phi_{1},\phi_{2},\phi_{3},\lambda)\in\Phi_{m}^{\times 3}\times\Lambda_{m,\{\emptyset\}\cup\mathcal{N}\cup\mathcal{L}\cup\mathcal{T}\cup\mathcal{S}\cup\mathcal{P}}\ \right| \\
 & \left. \vphantom{\Phi_{m}^{\times 3}} \text{if $C'_{i}:=\mu(\phi_{i})$ then $C'_{i}\in\mathcal{C}_{m}(\delta)$;} \right. \\
 & \left. \vphantom{\Phi_{m}^{\times 3}} \text{$\mathfrak{P}(\mathcal{L}',19)$, $\mathfrak{P}(\mathrm{sub}')$, $\mathfrak{P}(\mathcal{P})$, $\mathfrak{P}(\mathcal{P}')$} \,\right\},
\end{align*}
using \eqref{eq:cdelta}--\eqref{eq:nu}--\eqref{eq:prop-labels}--\eqref{eq:prop-lprime}--\eqref{eq:prop-subprime}--\eqref{eq:prop-p}--\eqref{eq:prop-pprime}, and let
\begin{align*}
\theta_{7} & :\mathcal{X}_{6}(n,\delta_{6})\rightarrow\mathcal{X}_{7}(n,3\delta_{6}), & \theta_{7}(\phi_{1},\phi_{2},\phi_{3},\lambda)=(\phi'_{1},\phi'_{2},\phi'_{3},\lambda'),
\end{align*}
following the construction described above.

Then $\theta_{7}$ is a well-defined injective function for all $\delta_{6}>0$ small enough and all $n$ large enough depending on $\delta_{6}$.
\end{lemma}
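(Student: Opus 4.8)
The plan is to check first that the construction produces a uniquely determined element of some $\mathcal{X}_{7}(n,3\delta_{6})$, so that $\theta_{7}$ is well-defined, and then that the construction reverses verbatim, so that $\theta_{7}$ is injective. Throughout we take $\delta_{6}$ small and $n$ large enough that Proposition~\ref{pr:gmcycles} applies to every class involved.

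For well-definedness I would start from the observation that the construction is unambiguous: the half-positions $a_{0}<\dots<a_{l}$ with $c^{\blacksquare}=3$, the values $b_{j}$ of \eqref{eq:defbj}, and the set $J$ are all read off deterministically from $(\phi_{1},\phi_{2},\phi_{3},\lambda)$. The one parity fact driving the structural bookkeeping is that \emph{$\nu_{j}$ is even if and only if $a_{j}-a_{j-1}$ is even}: if $(a_{j-1},a_{j})$ contains no half-position with $c^{\blacksquare}=2$ then each $C_{i}|_{j}$ is a single $(a_{j}-a_{j-1})$-cycle, while if it contains one, say $b_{0}$, then two of the $C_{i}|_{j}$ consist of a $(b_{0}-a_{j-1})$-cycle and an $(a_{j}-b_{0})$-cycle and the third is a single $(a_{j}-a_{j-1})$-cycle, and counting even-length cycles modulo $2$ gives the claim in both cases. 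Hence for $j\in J$ with such a $b_{0}$ present this equivalence forces $a_{j}-a_{j-1}$ to be even, and combined with $\min\{b_{0}-a_{j-1},a_{j}-b_{0}\}\ge 2$ and $\max\{b_{0}-a_{j-1},a_{j}-b_{0}\}\ge 5$ from $\mathfrak{P}(\mathrm{sub})$ this gives $a_{j}-a_{j-1}\ge 8$; and $b_{j}$, being the extreme of $[a_{j-1}+1,a_{j}-1]$ farther from $b_{0}$, lies at distance $\ge 4$ from $b_{0}$. These two bounds are exactly what makes the two new atoms $[a_{j-1},b_{j}]$ and $[b_{j},a_{j}]$ — one still containing $b_{0}$ and of length $\ge 7$, the other a lone $1$-cycle — satisfy $\mathfrak{P}(\mathrm{sub}')$; for $j\notin J$ the atom is untouched and $\mathfrak{P}(\mathrm{sub})$ already supplies length $\ge 7$ whenever a $b_{0}$ is present, with the boundaries $a_{j-1},a_{j}$ keeping their old labels, which lie outside $\mathcal{P}$. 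Finally $\lambda(b_{j})=\emptyset$: $c^{\blacksquare}(b_{j})=0$ rules out a label in $\mathcal{L}\cup\mathcal{T}\cup\mathcal{S}\cup\mathcal{P}$ by $\mathfrak{P}(\mathrm{labels})$, and $b_{j}$ being adjacent to a $c^{\blacksquare}=3$ half-position rules out one in $\mathcal{N}$ by $\mathfrak{P}(\mathcal{N},1)$.

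The other conditions of $\mathcal{X}_{7}$ I would dispatch as follows. Since each of $a_{0},\dots,a_{l}$ is a break of $\phi_{i}$ and $|J|\le l$, we have $c^{\blacksquare}(\phi'_{i})=c^{\blacksquare}(\phi_{i})+|J|\le 2c^{\blacksquare}(\phi_{i})$, so Proposition~\ref{pr:gmcycles}\eqref{pr:gmcycles-1}--\eqref{pr:gmcycles-2} gives $C'_{i}\in\mathcal{C}_{n}(3\delta_{6})$. For $\mathfrak{P}(\mathcal{P})$: inserting the break $b_{j}$ splits a $1$-cycle off a cycle in each of the three strings, which flips the parity of the number of even cycles of every $C_{i}|_{j}$, hence changes $\sum_{i}[C_{i}|_{j}\notin\mathrm{Alt}]$ by $3$ and flips the parity of $\nu_{j}$; so in the new triple every atom between consecutive $c^{\blacksquare}=3$ half-positions has odd $\nu$, and since the number of even cycles is additive over atoms, so does every restriction to an interval between two $c^{\blacksquare}=3$ half-positions — in particular $\nu_{C'_{1},C'_{2},C'_{3}}$ is odd, matching the fact that $\nu_{C_{1},C_{2},C_{3}}$ is odd (equivalently, $|J|$ is even). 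For $\mathfrak{P}(\mathcal{L}',19)$: each new break is adjacent to a $c^{\blacksquare}=3$ half-position, while $\mathfrak{P}(\mathcal{L},20)$ keeps all $c^{\blacksquare}=3$ half-positions at distance $>20$ from any $\mathcal{L}$-labelled one, so a new break can fall within distance $19$ of an $\mathcal{L}$-label only at distance exactly $1$, carrying $\spadesuit$, which is the permitted exception. For $\mathfrak{P}(\mathcal{P}')$: two half-positions $b_{j_{1}}<b_{j_{2}}$ with $j_{1}<j_{2}$ in $J$ are separated by $a_{j_{1}}$, which has $c^{\blacksquare}=3$ and a label in $\mathcal{L}\cup\mathcal{T}\cup\mathcal{S}$, hence different from $\spadesuit$.

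Injectivity is then immediate: in the image the half-positions carrying $\spadesuit$ are exactly $\{b_{j}:j\in J\}$, and at each of them the construction turned all three strings from $\square$ to $\blacksquare$ and the label from $\emptyset$ to $\spadesuit$ and did nothing else, so $\theta_{7}^{-1}$ simply restores $(\square,\square,\square,\emptyset)$ wherever $\lambda'=\spadesuit$. I expect the main obstacle to be the bookkeeping for $\mathfrak{P}(\mathrm{sub}')$: one has to pin down exactly how each atom $[a_{j-1},a_{j}]$ with $j\in J$ splits and verify all of the numerical and label constraints for the two resulting pieces, and the load-bearing input there is the parity equivalence above, which is precisely what upgrades the bound $a_{j}-a_{j-1}\ge 7$ coming from $\mathfrak{P}(\mathrm{sub})$ to $a_{j}-a_{j-1}\ge 8$ whenever a new break is actually created next to a surviving $b_{0}$.
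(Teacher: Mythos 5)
Your proposal is correct and follows essentially the same route as the paper's proof: count the new breaks to control $|C'_i|$, use $\mathfrak{P}(\mathcal{N},1)$ and $\mathfrak{P}(\mathrm{labels})$ to see $\lambda(b_j)=\emptyset$, use the adjacency of $b_j$ to a $c^{\blacksquare}=3$ half-position for $\mathfrak{P}(\mathcal{L}',19)$ and $\mathfrak{P}(\mathcal{P}')$, track the parity flip caused by splitting off a $1$-cycle for $\mathfrak{P}(\mathcal{P})$, and invert by erasing the $\spadesuit$-labelled breaks; your explicit equivalence ``$\nu_j$ even iff $a_j-a_{j-1}$ even'' is exactly the fact the paper invokes case by case (``if $a_j-a_{j-1}=3$ then $\nu_j$ is odd'', ``if $a_j-a_{j-1}=7$ then $\nu_j$ is odd''). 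The only case you leave tacit is $j\in J$ with no $b_0$ present, where the new atom of length $a_j-a_{j-1}-1$ must be checked to have length $\neq 2$; this is immediate from your parity fact (which forces $a_j-a_{j-1}$ even, hence $\neq 3$), so it is not a gap.
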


\begin{proof}
By construction, $c^{\blacksquare}(\phi'_{i})\leq 2c^{\blacksquare}(\phi_{i})$ for each $i$: therefore, $C'_{i}\in\mathcal{C}_{n}(3\delta_{6})$ by Proposition~\ref{pr:gmcycles}\eqref{pr:gmcycles-1}--\eqref{pr:gmcycles-2}. Since $b_{j}$ is always at distance $1$ from some $a$ with $c^{\blacksquare}(a)=3$, $\mathfrak{P}(\mathcal{L},20)$ in $\mathcal{X}_{6}$ implies $\mathfrak{P}(\mathcal{L}',19)$ in $\mathcal{X}_{7}$. There is at most one $b_{j}$ for each interval $[a_{j-1},a_{j}]$, so $\mathfrak{P}(\mathcal{P}')$ holds. At each step of the process, in one of the restrictions for which $\nu_{j}$ is even, from each $C_{i}|_{j}$ we replace one cycle, say of length $\ell$ ($\ell\geq 3$ by the choice of $b_{j}$), with two cycles of length $1,\ell-1$: this implies that $\nu'_{j}:=\nu_{C'_{1}|_{j},C'_{2}|_{j},C'_{3}|_{j}}$ is odd. The property passes to any glueing of intervals, yielding $\mathfrak{P}(\mathcal{P})$.

It remains to prove $\mathfrak{P}(\mathrm{sub}')$. The conditions of $\mathfrak{P}(\mathrm{sub})$ not involving distance bounds and labelling pass directly from $\mathcal{X}_{6}$ to $\mathcal{X}_{7}$. In the left case of \eqref{eq:graphicsend}, if $a_{j}-a_{j-1}=3$ then $\nu_{j}$ is odd, so $j\notin J$ and the condition $a_{2}-a_{1}\neq 2$ is still satisfied inside $\mathcal{X}_{7}$. Similarly, in the right case of \eqref{eq:graphicsend}, $\min\{b_{0}-a_{j-1},a_{j}-b_{0}\}\geq 2$ and $\max\{b_{0}-a_{j-1},a_{j}-b_{0}\}\geq 5$ imply that $a_{j}-a_{j-1}\geq 7$, but if equality holds then $\nu_{j}$ is odd, so the condition $a_{2}-a_{1}\geq 7$ holds inside $\mathcal{X}_{7}$. Moreover, $\max\{b_{0}-a_{j-1},a_{j}-b_{0}\}\geq 5$ and the choice of $b_{j}$ imply together that if $\lambda(a_{i})\in\mathcal{P}$ then $|b_{0}-a_{i}|\geq 4$. Therefore, $\mathfrak{P}(\mathrm{sub}')$ holds.

Injectivity by inverting $\theta_{7}$ is again immediate: wherever $(\phi'_{1},\phi'_{2},\phi'_{3},\lambda')(b)=(\blacksquare,\blacksquare,\blacksquare,\spadesuit)$, put $(\phi_{1},\phi_{2},\phi_{3},\lambda)(b)=(\square,\square,\square,\emptyset)$ since $\mathfrak{P}(\mathcal{N},1)$ holds in $\mathcal{X}_{6}$.
\end{proof}

\begin{reduction}\label{re:7}
Find, for any $(\phi_{1},\phi_{2},\phi_{3},\lambda)\in\mathcal{X}_{7}(n,\delta_{7})$, a solution septuple $(\eta_{1},\eta_{2},\eta_{3},\phi_{1},\phi_{2},\phi_{3},\lambda)$ such that $\alpha_{1}\alpha_{2}=\alpha_{3}$ for $\alpha_{i}=\mu(\eta_{i},\phi_{i})$. Furthermore, the solution must be aligned.
\end{reduction}

\begin{lemma}\label{le:7to6}
If there is a solution for Reduction~\ref{re:7}, for all $\delta_{7}>0$ small enough and all $n$ large enough depending on $\delta_{7}$, then there is a solution for Reduction~\ref{re:6}, for all $\delta_{6}>0$ small enough and all $n$ large enough depending on $\delta_{6}$.
\end{lemma}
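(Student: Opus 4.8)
The plan is to mirror the earlier ``undoing'' lemmas (Lemmas~\ref{le:5to4} and~\ref{le:6to5}). By Lemma~\ref{le:theta7}, every string triple of $\mathcal{X}_{6}:=\mathcal{X}_{6}(n,\delta_{6})$ is the unique $\theta_{7}$-preimage of some $(\phi_{1},\phi_{2},\phi_{3},\lambda)\in\mathcal{X}_{7}:=\mathcal{X}_{7}(n,3\delta_{6})$, so from a solution $(\eta_{1},\eta_{2},\eta_{3},\phi_{1},\phi_{2},\phi_{3},\lambda)$ for Reduction~\ref{re:7} I must build $\eta'_{i}$ making $(\eta'_{1},\eta'_{2},\eta'_{3},\phi'_{1},\phi'_{2},\phi'_{3},\lambda')$ a solution for Reduction~\ref{re:6}, where $(\phi'_{1},\phi'_{2},\phi'_{3},\lambda')=\theta_{7}^{-1}(\phi_{1},\phi_{2},\phi_{3},\lambda)$. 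The only half-positions $\theta_{7}$ touches are the $b_{j}$ with $\lambda(b_{j})=\spadesuit$; by the construction of $\theta_{7}$ and \eqref{eq:defbj}, at each such $b_{j}$ all three $\phi_{i}$ carry a break separating a $1$-cycle (the lone position between $b_{j}$ and the neighbouring break-$3$ half-position $a_{j}$, resp.\ $a_{j-1}$) from a cycle of length $\geq 2$ on the other side, and undoing $\theta_{7}$ just means deleting that break in all three strings --- i.e.\ gluing those two cycles into the length-$\geq 3$ cycle of $\phi'_{i}$.

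I would process the $b_{j}$ one at a time from right to left. As in the proof of Lemma~\ref{le:4to3}, passing from $\alpha_{1}\alpha_{2}=\alpha_{3}$ to $\alpha_{2}^{-1}\alpha_{1}^{-1}=\alpha_{3}^{-1}$ and reading the strings backwards exchanges the two orientations $b_{j}=a_{j}-1$ and $b_{j}=a_{j-1}+1$, so it is enough to treat $b_{j}=a_{j}-1$. Fix such a $b_{j}$. The alignment of the Reduction~\ref{re:7} solution supplies the common values needed for the glueing: Definition~\ref{de:aligned}\eqref{de:aligned-al1} applied to the triple of $1$-cycles at $b_{j}$ (which share exactly one position) yields a common value $z_{1}$ there, and since $\lambda(b_{j})=\spadesuit\in\mathcal{P}$ the triple of long cycles immediately to the left of $b_{j}$ --- sharing $\geq 2$ common positions by $\mathfrak{P}(\mathrm{sub}')$ and sharing the break $b_{j}$ --- falls under Definition~\ref{de:aligned}\eqref{de:aligned-alp} (if $\lambda(a_{j})\notin\mathcal{L}$) or \eqref{de:aligned-alpl} (if $\lambda(a_{j})\in\mathcal{L}$), hence satisfies at least $\mathfrak{C}_{1}^{2}\cap\mathfrak{C}_{1}\mathfrak{C}_{2}$ and provides a second common value disjoint from $z_{1}$ together with the pairwise agreements of $\mathfrak{C}_{2}$.

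With these common values I would put the $\alpha_{i}=\mu(\eta_{i},\phi_{i})$ into a standard local form around $b_{j}$ and glue by setting $\alpha'_{1}=\sigma\alpha_{1}\tau$, $\alpha'_{2}=\tau^{-1}\alpha_{2}\rho$, $\alpha'_{3}=\sigma\alpha_{3}\rho$ for explicit short permutations $\sigma,\tau,\rho$ supported on $z_{1}$ and the above common values: then $\alpha'_{1}\alpha'_{2}=\sigma\alpha_{1}\alpha_{2}\rho=\sigma\alpha_{3}\rho=\alpha'_{3}$ automatically, and $\sigma,\tau,\rho$ are to be chosen so that each $\alpha'_{i}$ acquires precisely the cycle structure of $\phi'_{i}$ --- this is the ``easy'' glueing-back direction already used in \eqref{eq:5to4} and \eqref{eq:6to5remove0}. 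Afterwards I would check the new septuple is aligned: triples of cycles not meeting the glued cycle are unchanged, and for a triple meeting it the old $\mathfrak{C}$-data of the long cycle survives (consecutive values stay consecutive away from the splice, and by Definition~\ref{de:aligned}\eqref{de:aligned-disj} the splice values are disjoint from everything else), while the glued cycle is long enough to carry whichever clause of Definition~\ref{de:aligned}\eqref{de:aligned-al1}--\eqref{de:aligned-alpl} is demanded of its triples --- here the bounds $a_{j}-a_{j-1}\geq 7$ in $\mathfrak{P}(\mathrm{sub}')$ and $\mathfrak{P}(\mathcal{L}',19)$ in $\mathcal{X}_{7}$ are what guarantee the required length. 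Repeating over all $b_{j}$ with $\lambda(b_{j})=\spadesuit$ then gives the solution for Reduction~\ref{re:6}.

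The hard part will be the glueing step itself: one has to exhibit $\sigma,\tau,\rho$ (and the standard local form of the $\alpha_{i}$) making all three $\alpha'_{i}$ land in the correct conjugacy classes at once, across the sub-cases determined by the label $\lambda(a_{j})\in\mathcal{L}\cup\mathcal{T}\cup\mathcal{S}\cup\mathcal{P}$ and by whether or not a half-position with $c^{\blacksquare}=2$ lies inside $[a_{j-1},a_{j}]$ (this affects the endpoints and length of the long cycle); and then one has to push every relevant $\mathfrak{C}$-property through the merge, which alters which triples of cycles share how many common positions. Both are bookkeeping-heavy but routine given the templates of Lemmas~\ref{le:4to3}--\ref{le:6to5}.
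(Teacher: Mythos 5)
There is a genuine gap, and it is not in the bookkeeping you defer but in the very first structural decision: you cannot process the $b_{j}$ ``one at a time.'' Undoing $\theta_{7}$ at a single $b_{j}$ means, in each of the three strings, merging the $1$-cycle at the isolated position with its neighbouring cycle of length $\geq 2$; that is, each $\alpha'_{i}$ must have exactly one fewer cycle than $\alpha_{i}$, so $\mathrm{sgn}(\alpha'_{i})=-\mathrm{sgn}(\alpha_{i})$ for all three $i$. But your ansatz $\alpha'_{1}=\sigma\alpha_{1}\tau$, $\alpha'_{2}=\tau^{-1}\alpha_{2}\rho$, $\alpha'_{3}=\sigma\alpha_{3}\rho$ forces $\alpha'_{1}\alpha'_{2}=\alpha'_{3}$, whence $\mathrm{sgn}(\alpha'_{1})\mathrm{sgn}(\alpha'_{2})=\mathrm{sgn}(\alpha'_{3})$, i.e.\ $\mathrm{sgn}(\alpha_{3})=-\mathrm{sgn}(\alpha_{3})$ --- a contradiction. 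No choice of $\sigma,\tau,\rho$ (indeed no construction whatsoever) can undo a single $\spadesuit$ while preserving both the product relation and the target cycle structures. This is precisely the parity obstruction that Reduction~\ref{re:7} was designed to manage: the set $J$ of subproblems with even $\nu_{j}$ has even cardinality because the global $\nu$ is odd, and the paper's proof exploits this by processing the $b_{j}$ in \emph{consecutive pairs} $b_{j_{1}},b_{j_{2}}$. The explicit formulas \eqref{eq:7to6-alphaprime} perform both merges simultaneously, multiplying each $\alpha_{i}$ by even permutations supported on the five common values $r,s,t,u,v$ supplied by $\mathfrak{C}_{1}$ and $\mathfrak{C}_{2}$; each string then loses two cycles, so all signs are preserved.

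The remainder of your outline --- invoking Lemma~\ref{le:theta7} for the preimage, extracting common values from Definition~\ref{de:aligned}\eqref{de:aligned-al1} and \eqref{de:aligned-alp}/\eqref{de:aligned-alpl} at the shared breaks, and arguing that alignment survives because splice values are disjoint from the $\mathfrak{C}$-data of untouched triples --- is in the right spirit and matches the paper's argument. But the alignment verification also has to be redone relative to the paired construction: the paper's proof carefully tracks how properties of triples in case \eqref{de:aligned-alp} (resp.\ \eqref{de:aligned-alpl}) for the old solution downgrade to case \eqref{de:aligned-al1} (resp.\ \eqref{de:aligned-all}) for the new one after the $\mathcal{P}$ labels disappear, consuming one instance of $\mathfrak{C}_{1}$ (with $z_{1}=r$) or of $\mathfrak{C}_{2}$ (with $(z_{1},z_{2})=(t,u)$) in the process. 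You should rebuild the proof around the pairing before any of the local gluing can be made to work.
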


\begin{proof}
By Lemma~\ref{le:theta7}, every string triple of $\mathcal{X}_{6}:=\mathcal{X}_{6}(n,\delta_{6})$ is the unique preimage of some $(\phi_{1},\phi_{2},\phi_{3},\lambda)\in\mathcal{X}_{7}:=\mathcal{X}_{7}(n,3\delta_{6})$ via $\theta_{7}$. Let $(\eta_{1},\eta_{2},\eta_{3},\phi_{1},\phi_{2},\phi_{3},\lambda)$ be a solution for Reduction~\ref{re:7}. We need to produce $\eta'_{i}$ such that the septuple $(\eta'_{1},\eta'_{2},\eta'_{3},\phi'_{1},\phi'_{2},\phi'_{3},\lambda')$ is the required solution for Reduction~\ref{re:6}, where we set $(\phi'_{1},\phi'_{2},\phi'_{3},\lambda')=\theta_{7}^{-1}(\phi_{1},\phi_{2},\phi_{3},\lambda)$.

For $(\phi'_{1},\phi'_{2},\phi'_{3},\lambda')\in\mathcal{X}_{6}$, define $a_{j},b_{j},J$ as above, so that $\lambda(x)=\spadesuit\in\mathcal{P}$ exactly at $x=b_{j}$ with $j\in J$. To create the $\eta'_{i}$, we work separately on each pair of values $b_{j_{1}},b_{j_{2}}$ with $j_{1},j_{2}$ consecutive inside $J$. Assume that the $b_{1}=a_{1}-1,b_{2}=a_{2}-1$ (the cases in which $b_{i}=a_{i-1}+1$ work similarly).

Up to cycling around, and using $\mathfrak{C}_{1}$ and $\mathfrak{C}_{2}$ by Definition~\ref{de:aligned}\eqref{de:aligned-al1}--\eqref{de:aligned-alp}, we are in the following situation for some $r,s,t,u,v$:
\begin{align*}
\begin{aligned}
\tilde{P}_{n}: & &  & &  & &  & & b_{j_{1}} \,\ a_{j_{1}}\!  & &  & &  & &  & & b_{j_{2}} \,\ a_{j_{2}}\! & &  & \\
(\eta_{1},\phi_{1}): & & \cdots & & \hphantom{\square} \hphantom{\square} \blacksquare & & \vec{\rho}_{1}\hphantom{\square} & & r \ \blacksquare \ s \ \blacksquare & & \cdots & & \hphantom{\square} \hphantom{\square} \blacksquare & & \vec{\sigma}_{1}\hphantom{\square} & & t \ u \ \blacksquare \ v \ \blacksquare & & \cdots & \\
(\eta_{2},\phi_{2}): & & \cdots & & \blacksquare \hphantom{\square} \hphantom{\square} & & \vec{\rho}_{2}\hphantom{\square} & & r \ \blacksquare \ s \ \blacksquare & & \cdots & & \blacksquare \hphantom{\square} \hphantom{\square} & & \vec{\sigma}_{2}\hphantom{\square} & & u \ \blacksquare \ v \ \blacksquare & & \cdots & \\
(\eta_{3},\phi_{3}): & & \cdots & & \hphantom{\square} \blacksquare \hphantom{\square} & & \vec{\rho}_{3}\hphantom{\square} & & r \ \blacksquare \ s \ \blacksquare & & \cdots & & \hphantom{\square} \blacksquare \hphantom{\square} & & \vec{\sigma}_{3}\hphantom{\square} & & t \ \blacksquare \ v \ \blacksquare & & \cdots & \\
\lambda: & &  & &  & &  & & \spadesuit \ \ \hphantom{a_{j_{1}}}  & &  & &  & &  & & \spadesuit \ \ \hphantom{a_{j_{1}}} & &  &
\end{aligned}
\end{align*}
corresponding as usual to permutations $\alpha_{i}=\mu(\eta_{i},\phi_{i})$ of the form
\begin{align}
\alpha_{1} & =\vec{\beta}_{1}(\vec{\rho}_{1}\ r)(s)(\vec{\sigma}_{1}\ t\ u)(v), \nonumber \\
\alpha_{2} & =\vec{\beta}_{2}(\vec{\rho}_{2}\ r)(s)(\vec{\sigma}_{2}\ u)(v), \label{eq:7to6-alpha} \\
\alpha_{3} & =\alpha_{1}\alpha_{2}=\vec{\beta}_{3}(\vec{\rho}_{3}\ r)(s)(\vec{\sigma}_{3}\ t)(v), \nonumber
\end{align}
where each $\vec{\beta}_{i}$ is the product of the other cycles of $\alpha_{i}$.

Take
\begin{align}
\alpha'_{1} & :=\alpha_{1}(r\ s)(u\ v)=\vec{\beta}_{1}(\vec{\rho}_{1}\ s\ r)(\vec{\sigma}_{1}\ t\ v\ u), \nonumber \\
\alpha'_{2} & :=(r\ u\ s)\alpha_{2}(r\ v\ s)=\vec{\beta}_{2}(\vec{\rho}_{2}\ v\ s)(\vec{\sigma}_{2}\ u\ r), \label{eq:7to6-alphaprime} \\
\alpha'_{3} & :=\alpha'_{1}\alpha'_{2}=\vec{\beta}_{3}(\vec{\rho}_{3}\ v\ r)(\vec{\sigma}_{3}\ t\ s). \nonumber
\end{align}
Then the naturally defined $\eta'_{i},\phi'_{i}$ with $\alpha'_{i}=\mu(\eta'_{i},\phi'_{i})$ give solutions for the problem in which we replace $(\blacksquare,\blacksquare,\blacksquare,\spadesuit)$ with $(\square,\square,\square,\emptyset)$ at the half-positions $b_{j_{1}},b_{j_{2}}$, as becomes apparent by the construction
\begin{align*}
\begin{aligned}
\tilde{P}_{n}: & &  & &  & &  & & b_{j_{1}} \,\ a_{j_{1}}\!  & &  & &  & &  & & b_{j_{2}} \,\ a_{j_{2}}\! & &  & \\
(\eta'_{1},\phi'_{1}): & & \cdots & & \hphantom{\square} \hphantom{\square} \blacksquare & & \vec{\rho}_{1}\hphantom{\square} & & s \ \square \ r \ \blacksquare & & \cdots & & \hphantom{\square} \hphantom{\square} \blacksquare & & \vec{\sigma}_{1}\hphantom{\square} & & t \ v \ \square \ u \ \blacksquare & & \cdots & \\
(\eta'_{2},\phi'_{2}): & & \cdots & & \blacksquare \hphantom{\square} \hphantom{\square} & & \vec{\rho}_{2}\hphantom{\square} & & v \ \square \ s \ \blacksquare & & \cdots & & \blacksquare \hphantom{\square} \hphantom{\square} & & \vec{\sigma}_{2}\hphantom{\square} & & u \ \square \ r \ \blacksquare & & \cdots & \\
(\eta'_{3},\phi'_{3}): & & \cdots & & \hphantom{\square} \blacksquare \hphantom{\square} & & \vec{\rho}_{3}\hphantom{\square} & & v \ \square \ r \ \blacksquare & & \cdots & & \hphantom{\square} \blacksquare \hphantom{\square} & & \vec{\sigma}_{3}\hphantom{\square} & & t \ \square \ s \ \blacksquare & & \cdots & \\
\lambda': & &  & &  & &  & & \emptyset \ \ \hphantom{a_{j_{1}}}  & &  & &  & &  & & \emptyset \ \ \hphantom{a_{j_{1}}} & &  &
\end{aligned}
\end{align*}
Repeating the process for all pairs $b_{j_{1}},b_{j_{2}}$, the final $(\eta'_{1},\eta'_{2},\eta'_{3},\phi'_{1},\phi'_{2},\phi'_{3},\lambda')$ is a solution for Reduction~\ref{re:6}.

Now we need to prove the alignment of the solution. We will show that alignment is preserved at every iteration of going from \eqref{eq:7to6-alpha} to \eqref{eq:7to6-alphaprime}. For each $i$, call $B_{i}$ the set of cycles $\gamma_{i}$ of $(\eta'_{i},\phi'_{i})$ that are contained in $\vec{\beta}_{i}$ in the expressions \eqref{eq:7to6-alphaprime}. For a given $\gamma_{i}$, we denote by $\gamma^{*}_{i}$ the longest cycle of $(\eta_{i},\phi_{i})$ from which $\gamma_{i}$ comes in the expressions \eqref{eq:7to6-alpha}: in other words, if $\gamma_{i}\in B_{i}$ then $\gamma^{*}_{i}=\gamma_{i}$, if $\gamma_{i}=(\vec{\rho}_{1}\ s\ r)$ then $\gamma^{*}_{i}=(\vec{\rho}_{1}\ r)$, and so on.

Consider any triple of cycles $\gamma_{i}$ of $(\eta'_{i},\phi'_{i})$: the $\gamma_{i}$ cannot fall into cases \eqref{de:aligned-alp} and \eqref{de:aligned-alpl} of Definition~\ref{de:aligned}, since there is no label in $\mathcal{P}$ in $\mathcal{X}_{6}$. If $\gamma_{i}\in B_{i}$ for all $i$, then since $\gamma^{*}_{i}=\gamma_{i}$ the properties of Definition~\ref{de:aligned} are preserved. Suppose that $\gamma_{i_{1}}\in B_{i_{1}}$ and $\gamma_{i_{2}}\notin B_{i_{2}}$ for some $i_{1},i_{2}$. If we are in case \eqref{de:aligned-al1} then the $\gamma^{*}_{i}$ are in case \eqref{de:aligned-al1} too, so $\mathfrak{C}_{1}$ holds for them, and the corresponding value $z_{1}$ cannot be $r,s,v$ because none of them sits in a cycle of $B_{i}$ in \eqref{eq:7to6-alpha}; however, for any $z_{1}\notin\{r,s,v\}$, if $z_{1}\in\gamma^{*}_{i}$ then $z_{1}\in\gamma_{i}$ as well, so $\mathfrak{C}_{1}$ holds for the $\gamma_{i}$. If we are in case \eqref{de:aligned-all} then we must have exactly two indices $i_{j}$ with $\gamma_{i_{j}}\in B_{i_{j}}$: we cannot have just one because there would need to be some $b$ with $c^{\blacksquare}(b)=1$, but $\mathfrak{P}(\mathrm{sub})$ holds in $\mathcal{X}_{6}$ so it cannot happen. Therefore $\gamma^{*}_{i_{j}}\in B_{i_{j}}$ in \eqref{eq:7to6-alpha} for two indices, the $\gamma^{*}_{i}$ are in case \eqref{de:aligned-all} too, $\mathfrak{C}_{1}^{3}\mathfrak{C}_{3}^{2}\mathfrak{C}_{4}^{2}\mathfrak{C}_{5}\mathfrak{C}_{6}$ holds for them, and the values $z_{j}$ involved cannot be any of $r,s,t,u,v$ since each of those sits in at least two $\gamma^{*}_{i}\notin B_{i}$ in \eqref{eq:7to6-alpha}. For every other pair of $z_{1},z_{2}$, if they sit consecutively in $\gamma^{*}_{i}$ then they sit consecutively in $\gamma_{i}$ too. Thus, $\mathfrak{C}_{1}^{3}\mathfrak{C}_{3}^{2}\mathfrak{C}_{4}^{2}\mathfrak{C}_{5}\mathfrak{C}_{6}$ holds for the $\gamma_{i}$ using the same $z_{j}$.

Suppose finally that $\gamma_{i}\notin B_{i}$ for all $i$: clearly there are only two triples that share common positions, namely the triple containing all the $\vec{\rho}_{i}$ and the triple containing all the $\vec{\sigma}_{i}$.

Assume that we are in case \eqref{de:aligned-al1}. Then the corresponding cycles $\gamma^{*}_{i}$ are in case \eqref{de:aligned-alp}, and $\mathfrak{C}_{1}^{2}$ and $\mathfrak{C}_{1}\mathfrak{C}_{2}$ hold for them. For the triple containing the $\vec{\rho}_{i}$, by using $z_{1}=r$ for one instance of $\mathfrak{C}_{1}$ inside the property $\mathfrak{C}_{1}^{2}$, there is still some $z_{1}\neq r$ such that $\mathfrak{C}_{1}$ holds for the triple of $\gamma^{*}_{i}$ and that value. Similarly, for the triple containing the $\vec{\sigma}_{i}$, by using $(z_{1},z_{2})=(t,u)$ for $\mathfrak{C}_{2}$, there is some $z_{1}\notin\{t,u\}$ such that $\mathfrak{C}_{1}$ holds for the $\gamma^{*}_{i}$ and that value. In either case $z_{1}\notin\{r,s,t,u,v\}$ and it is contained in the $\vec{\rho}_{i}$ or the $\vec{\sigma}_{i}$, so $\mathfrak{C}_{1}$ holds for the $\gamma_{i}$ as well.

Assume instead that we are in case \eqref{de:aligned-all}. Then the corresponding $\gamma^{*}_{i}$ are in case \eqref{de:aligned-alpl}, and $\mathfrak{C}_{1}^{4}\mathfrak{C}_{2}\mathfrak{C}_{3}^{2}\mathfrak{C}_{4}^{2}\mathfrak{C}_{5}\mathfrak{C}_{6}$ holds for them. In the same fashion as before, we can use $z_{1}=r$ or $(z_{1},z_{2})=(t,u)$ for one instance of $\mathfrak{C}_{1}$ or $\mathfrak{C}_{2}$ in the $\gamma^{*}_{i}$, and use disjoint values $z_{j}\notin\{r,s,t,u,v\}$ for the other properties, implying that $\mathfrak{C}_{1}^{3}\mathfrak{C}_{3}^{2}\mathfrak{C}_{4}^{2}\mathfrak{C}_{5}\mathfrak{C}_{6}$ holds for the $\gamma_{i}$.

In conclusion, the solution \eqref{eq:7to6-alphaprime} is aligned. The process can be repeated every time we go from \eqref{eq:7to6-alpha} to \eqref{eq:7to6-alphaprime}, since by $\mathfrak{P}(\mathcal{P}')$ the half-positions with labels in $\mathcal{P}$ sit in different restrictions so that every cycle is affected by the changes only once during the whole process (and the rest of the time it is part of $\vec{\beta}_{i}$). Hence, the final solution $(\eta'_{1},\eta'_{2},\eta'_{3},\phi'_{1},\phi'_{2},\phi'_{3},\lambda')$ is aligned as well.
\end{proof}

\section{Solutions in the reduced case}\label{se:solutions}

Our final step is to produce an aligned solution for the problem in Reduction~\ref{re:7}. Since the properties $\mathfrak{P}(\mathrm{sub'})$ and $\mathfrak{P}(\mathcal{P})$ as defined in \eqref{eq:prop-subprime} and \eqref{eq:prop-p} hold in $\mathcal{X}_{7}$, our string triple can be divided into pieces as in \eqref{eq:graphicsendprime}, each of which represents a problem in some smaller $\mathrm{Sym}(m)$ with $\nu$ odd (as defined in \eqref{eq:nu}). Let then $(\phi_{1}|_{j},\phi_{2}|_{j},\phi_{3}|_{j},\lambda|_{j})$ be the restriction of the string triple of Reduction~\ref{re:7} to the $j$-th piece, corresponding to a problem in $\mathrm{Sym}(n_{j})$, and let $(\eta_{i,j},\phi_{i}|_{j})\in\Pi_{n_{j}}\times\Phi_{n_{j}}$ be element strings such that the corresponding permutations $\alpha_{i,j}:=\mu(\eta_{i,j},\phi_{i}|_{j})$ satisfy $\alpha_{1,j}\alpha_{2,j}=\alpha_{3,j}$. We can concatenate the solutions by glueing together the element strings appropriately: for example, glueing the solutions at $j=1$ and $j=2$ together yields
\begin{align*}
\eta'_{i} & \in\Pi_{n_{1}+n_{2}}, & \eta'_{i}(x) & =\begin{cases}\eta_{i,1}(x) & (x\leq n_{1}), \\ n_{1}+\eta_{i,2}(x-n_{1}) & (x>n_{1}),\end{cases} \\
\phi'_{i} & \in\Phi_{n_{1}+n_{2}}, & \phi'_{i}(x) & =\begin{cases}(\phi_{i}|_{1})(x) & \left(x\leq n_{1}+\frac{1}{2}\right), \\ (\phi_{i}|_{2})(x-n_{1}) & \left(x\geq n_{1}+\frac{1}{2}\right),\end{cases} \\
\lambda' & \in\Lambda_{n_{1}+n_{2},\{\emptyset\}\cup\mathcal{N}\cup\mathcal{L}\cup\mathcal{T}\cup\mathcal{S}\cup\mathcal{P}} & \lambda'(x) & =\begin{cases}(\lambda|_{1})(x) & \left(x\leq n_{1}+\frac{1}{2}\right), \\ (\lambda|_{2})(x-n_{1}) & \left(x\geq n_{1}+\frac{1}{2}\right).\end{cases}
\end{align*}
Note that we are identifying the last half-position of $j=1$ and the first one of $j=2$, which makes sense since $(\phi_{i}|_{1})\left(n_{1}+\frac{1}{2}\right)=(\phi_{i}|_{2})\left(\frac{1}{2}\right)=\blacksquare$ and $(\lambda|_{1})\left(n_{1}+\frac{1}{2}\right)=(\lambda|_{2})\left(\frac{1}{2}\right)=\emptyset$. Concatenating solutions yields a solution for the original problem since, after renaming the sets on which the $\mathrm{Sym}(n_{j})$ are acting to make them disjoint, we have $\alpha_{1,1}\ldots\alpha_{1,j}\cdot\alpha_{2,1}\ldots\alpha_{2,j}=\alpha_{3,1}\ldots\alpha_{3,j}$.

Now we choose the solutions. If $m$ is odd and $\phi_{1}=\phi_{2}=\phi_{3}$ is made of one $m$-cycle, we choose
\begin{equation}\label{eq:baby1}
\begin{aligned}
(\eta_{1},\phi_{1}) & =(\eta_{2},\phi_{2})=\blacksquare\ 1\ 2\ \cdots\ m\ \blacksquare \\
(\eta_{3},\phi_{3}) & =\blacksquare\ [1\overset{\text{odds}}{\cdots}m]\ [2\overset{\text{evens}}{\cdots}m-1]\ \blacksquare
\end{aligned}
\end{equation}

If $d\geq 3$ is odd and $e$ is even, $\phi_{1}=\phi_{2}$ is made of one $d$-cycle and one $e$-cycle, and $\phi_{3}$ is made of one $(d+e)$-cycle, we choose
\begin{equation}\label{eq:baby2}
\begin{aligned}
(\eta_{1},\phi_{1}) & =\blacksquare\ x_{1}\ x_{2}\ \cdots\ x_{d}\ \blacksquare\ y_{1}\ y_{2}\ \cdots\ y_{e}\ \blacksquare \\
(\eta_{2},\phi_{2}) & =\blacksquare\ y_{1}\ x_{2}\ \cdots\ x_{d}\ \blacksquare\ x_{1}\ y_{2}\ \cdots\ y_{e}\ \blacksquare \\
(\eta_{3},\phi_{3}) & =\blacksquare\ [x_{1}\overset{\text{odds}}{\cdots}x_{d}]\ [y_{2}\overset{\text{evens}}{\cdots}y_{e}]\ [x_{2}\overset{\text{evens}}{\cdots}x_{d-1}]\ [y_{1}\overset{\text{odds}}{\cdots}y_{e-1}]\ \blacksquare
\end{aligned}
\end{equation}
In the above we have replaced $i$ with $x_{i}$ and $d+j$ with $y_{j}$, which makes the structure of the solution more evident; we shall act similarly below. If $\phi_{1}=\phi_{2}$ is made of one $r$-cycle and one $d$-cycle, we swap the order of the $e$-cycle and the $d$-cycle in \eqref{eq:baby2}: this is a necessary remark, since the properties in Definition~\ref{de:aligned} depend on the ordering of cycles in the $\phi_{i}$.

If $d\geq 3$ is odd and $e$ is even, $\phi_{1}=\phi_{3}$ is made of one $d$-cycle and one $e$-cycle, and $\phi_{2}$ is made of one $(d+e)$-cycle, we choose
\begin{equation}\label{eq:baby3}
\begin{aligned}
(\eta_{1},\phi_{1}) & =\blacksquare\ x_{1}\ x_{2}\ \cdots\ x_{d}\ \blacksquare\ y_{1}\ y_{2}\ \cdots\ y_{e}\ \blacksquare \\
(\eta_{2},\phi_{2}) & =\blacksquare\ x_{2}\ x_{3}\ y_{2}\ [x_{4}\overset{\text{all}}{\cdots}x_{d}]\ x_{1}\ y_{1}\ [y_{3}\overset{\text{all}}{\cdots}y_{e}]\ \blacksquare \\
(\eta_{3},\phi_{3}) & =\blacksquare\ x_{1}\ [x_{3}\overset{\text{odds}}{\cdots}x_{d}]\ y_{1}\ [x_{4}\overset{\text{evens}}{\cdots}x_{d-1}]\ \blacksquare\ x_{2}\ [y_{2}\overset{\text{evens}}{\cdots}y_{e}]\ [y_{3}\overset{\text{odds}}{\cdots}y_{e-1}]\ \blacksquare
\end{aligned}
\end{equation}
If $\phi_{1}=\phi_{3}$ is made of one $e$-cycle and one $d$-cycle, swap the order of the $e$-cycle and the $d$-cycle in \eqref{eq:baby3}.

If $d\geq 3$ is odd and $e$ is even, $\phi_{2}=\phi_{3}$ is made of one $d$-cycle and one $e$-cycle, and $\phi_{1}$ is made of one $(d+e)$-cycle, we choose
\begin{equation}\label{eq:baby4}
\begin{aligned}
(\eta_{1},\phi_{1}) & =\blacksquare\ x_{1}\ x_{2}\ \cdots\ x_{d}\ y_{1}\ y_{2}\ \cdots\ y_{e}\ \blacksquare \\
(\eta_{2},\phi_{2}) & =\blacksquare\ y_{1}\ x_{1}\ x_{2}\ [x_{4}\overset{\text{all}}{\cdots}x_{d}]\ \blacksquare\ y_{2}\ x_{3}\ [y_{3}\overset{\text{all}}{\cdots}y_{e}]\ \blacksquare \\
(\eta_{3},\phi_{3}) & =\blacksquare\ x_{1}\ [x_{4}\overset{\text{evens}}{\cdots}x_{d-1}]\ y_{1}\ [x_{3}\overset{\text{odds}}{\cdots}x_{d}]\ \blacksquare\ x_{2}\ [y_{3}\overset{\text{odds}}{\cdots}y_{e-1}]\ [y_{2}\overset{\text{evens}}{\cdots}y_{e}]\ \blacksquare
\end{aligned}
\end{equation}
If $\phi_{2}=\phi_{3}$ is made of one $e$-cycle and one $d$-cycle, swap the order of the $e$-cycle and the $d$-cycle in \eqref{eq:baby4}.

By $\mathfrak{P}(\mathrm{sub}')$ and $\mathfrak{P}(\mathcal{P})$, the four problems for which \eqref{eq:baby1}--\eqref{eq:baby2}--\eqref{eq:baby3}--\eqref{eq:baby4} provide solutions are the only possibilities that can occur for restrictions in $\mathcal{X}_{7}$. It is easy to check that they are solutions, i.e.\ $\mu(\eta_{1},\phi_{1})\mu(\eta_{2},\phi_{2})=\mu(\eta_{3},\phi_{3})$, and concatenating them appropriately solves the problem in Reduction~\ref{re:7}. We can say more, though.

\begin{proposition}\label{pr:7good}
There is an aligned solution for Reduction~\ref{re:7}.
\end{proposition}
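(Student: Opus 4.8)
The plan is to show that each of the concatenation pieces admits an \emph{aligned} solution, and that alignment is preserved under concatenation, so that gluing aligned solutions of the subproblems yields an aligned solution for the whole of Reduction~\ref{re:7}. First I would observe that, by $\mathfrak{P}(\mathrm{sub}')$ and $\mathfrak{P}(\mathcal{P})$, every restriction $(\phi_1|_j,\phi_2|_j,\phi_3|_j,\lambda|_j)$ to a maximal interval $[a_{j-1},a_j]$ between consecutive half-positions with $c^{\blacksquare}=3$ falls into exactly one of the four shapes for which \eqref{eq:baby1}--\eqref{eq:baby2}--\eqref{eq:baby3}--\eqref{eq:baby4} are the chosen solutions (up to swapping the order of the $d$- and $e$-cycles, which only permutes the roles of the two blocks). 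So it suffices to verify directly that each of these four explicit solutions is aligned \emph{as a standalone solution in $\mathrm{Sym}(n_j)$}, and then argue the gluing.

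The core of the argument is the case-by-case check of Definition~\ref{de:aligned} for the solutions \eqref{eq:baby1}--\eqref{eq:baby4}. For each, one enumerates all triples of cycles $\gamma_1,\gamma_2,\gamma_3$ sharing $\geq 1$ common positions; since each $\phi_i|_j$ has at most two cycles, this is a short list. For \eqref{eq:baby1} there is a single triple (the three $m$-cycles): cycling around so that all three start with the value $1$, one reads off $\mathfrak{C}_1$ immediately, and if $\lambda|_j$ places a $\mathcal{P}$- or $\mathcal{L}$-type break at an endpoint $a$ or $b$, the fact (from $\mathfrak{P}(\mathrm{sub}')$) that $m\geq 7$ (resp.\ the interval is long) gives enough room to cycle the value $1$ next to a second value $2$ and check $\mathfrak{C}_1^2\cap\mathfrak{C}_1\mathfrak{C}_2$ (resp.\ the larger composite $\mathfrak{C}_1^3\mathfrak{C}_3^2\mathfrak{C}_4^2\mathfrak{C}_5\mathfrak{C}_6$), using the visual templates in Definition~\ref{de:aligned}; the disjointness condition \eqref{de:aligned-disj} is automatic here since there is only one triple. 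For \eqref{eq:baby2}--\eqref{eq:baby4} one handles the cycle-pair structure: the ``common-value'' triple(s) sharing positions are found by inspection (in \eqref{eq:baby2} the first cycles share the prefix, in \eqref{eq:baby3}--\eqref{eq:baby4} one matches the $x$-block of $\phi_1$ with the appropriate cycle of $\phi_2$ and $\phi_3$), and for each of the six properties $\mathfrak{C}_1,\dots,\mathfrak{C}_6$ one exhibits the required values $z_1,z_2$ explicitly among the $x_i,y_j$, checking the ``cycling around'' and the adjacency conditions by direct computation with the displayed strings. The key point to keep track of is that $\mathfrak{P}(\mathrm{sub}')$ guarantees $a_j-a_{j-1}\geq 7$ in the presence of a middle break $b_0$, and that $\mathcal{P}$-labels are kept at distance $\geq 4$ from $b_0$, which is exactly enough to supply the extra values needed for the composite properties in Definition~\ref{de:aligned}\eqref{de:aligned-alp}--\eqref{de:aligned-alpl}; the footnote in \S\ref{se:re5} about \eqref{eq:baby2} being a bottleneck flags the tightest of these verifications.

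Finally I would address concatenation. Cycles entirely inside one piece keep all their properties since the gluing in the displayed formulas renames values injectively and does not alter within-piece adjacency, breaks, or labels. A triple of cycles from three \emph{different} pieces, or straddling a join half-position, shares at most one common position (the join half-position is a $\blacksquare$ in all three strings by construction, so no two of those cycles share an interior position), hence only \eqref{de:aligned-al1} can apply and there the values are forced disjoint across distinct triples because distinct pieces use disjoint value sets after renaming. Thus \eqref{de:aligned-disj} holds globally, and the concatenated septuple is aligned. The main obstacle I expect is the sheer bookkeeping in verifying Definition~\ref{de:aligned}\eqref{de:aligned-all}--\eqref{de:aligned-alpl} for \eqref{eq:baby2}--\eqref{eq:baby4}: one must simultaneously satisfy several $\mathfrak{C}_k$ on the \emph{same} triple using pairwise-disjoint value sets, and confirm that the lengths guaranteed by $\mathfrak{P}(\mathrm{sub}')$ never run out of spare values — this is where an explicit small-case tightness check (as in the \S\ref{se:re5} footnote) is unavoidable, and everything else is routine.
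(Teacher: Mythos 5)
Your proposal follows the paper's proof essentially verbatim in structure: reduce to the four explicit solutions \eqref{eq:baby1}--\eqref{eq:baby4}, verify each is aligned using the spare values guaranteed by $\mathfrak{P}(\mathrm{sub}')$ and $\mathfrak{P}(\mathcal{L}',19)$, and observe that triples sharing positions never straddle two pieces while distinct pieces use disjoint values, so alignment survives concatenation. This is correct, with the caveat that the actual content of the paper's proof --- the explicit choice of $z_{1},z_{2}$ among the $x_{i},y_{j}$ witnessing each $\mathfrak{C}_{k}$ for \eqref{eq:baby2}--\eqref{eq:baby4} --- is asserted as ``routine'' rather than exhibited. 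Two small imprecisions, neither fatal: for \eqref{eq:baby1} the bound $m\geq 7$ does not come from $\mathfrak{P}(\mathrm{sub}')$ (which gives $a_{2}-a_{1}\geq 7$ only when an interior $b_{0}$ exists), but only $m\geq 3$ is needed there; and cycles straddling a join share \emph{zero} common positions, not ``at most one'' --- this matters, since if they shared one position then \eqref{de:aligned-al1} would demand a common value that cross-piece cycles cannot have.
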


\begin{proof}
Cycles share common positions only if they all sit between two consecutive $a_{1},a_{2}\in\tilde{P}_{n}$ with $c^{\blacksquare}(a_{1})=c^{\blacksquare}(a_{2})=3$, and we use distinct values in cycles that do not, so it is enough to examine \eqref{eq:baby1}--\eqref{eq:baby2}--\eqref{eq:baby3}--\eqref{eq:baby4} separately, and the properties of Definition~\ref{de:aligned} will follow for the entire strings.

In \eqref{eq:baby1}, the value $z_{1}=1$ shows that $\mathfrak{C}_{1}$ holds regardless of $m$, and if they share $\geq 2$ common positions (implying that $m\geq 3$) then a second $\mathfrak{C}_{1}$ holds using the value $z_{1}=2$, and $\mathfrak{C}_{2}$ holds using the pair of values $(z_{1},z_{2})=(2,3)$, yielding $\mathfrak{C}_{1}^{2}\cap\mathfrak{C}_{1}\mathfrak{C}_{2}$. Assume that $\lambda\left(m+\frac{1}{2}\right)\in\mathcal{L}$, or that $\lambda\left(m+\frac{1}{2}\right)\in\mathcal{P}$ and $\lambda\left(m+\frac{3}{2}\right)\in\mathcal{L}$. By $\mathfrak{P}(\mathcal{L}',19)$ then $m>18$, so the values from $4$ to $18$ can be used to show the remaining properties and obtain $\mathfrak{C}_{1}^{3}\mathfrak{C}_{3}^{2}\mathfrak{C}_{4}^{2}\mathfrak{C}_{5}\mathfrak{C}_{6}$ or $\mathfrak{C}_{1}^{4}\mathfrak{C}_{2}\mathfrak{C}_{3}^{2}\mathfrak{C}_{4}^{2}\mathfrak{C}_{5}\mathfrak{C}_{6}$.

Consider \eqref{eq:baby2} and the triple of cycles that includes the ones of length $d$. Since $d\geq 3$, use $z_{1}=x_{2}$ to show $\mathfrak{C}_{1}$. If there is a common break with a label in $\mathcal{P}$, by $\mathfrak{P}(\mathrm{sub}')$ we have $d>3$, so use $z_{1}=x_{3}$ to show $\mathfrak{C}_{1}$ and $(z_{1},z_{2})=(x_{3},x_{4})$ to show $\mathfrak{C}_{2}$, giving $\mathfrak{C}_{1}^{2}\cap\mathfrak{C}_{1}\mathfrak{C}_{2}$. If there is a common break with a label in $\mathcal{L}$, or in $\mathcal{P}$ and neighbouring a label in $\mathcal{L}$, then by $\mathfrak{P}(\mathcal{L}',19)$ we have $d>18$ and we can use the values $x_{i}$ with $5\leq i\leq 19$ to prove the remaining properties and obtain the whole $\mathfrak{C}_{1}^{3}\mathfrak{C}_{3}^{2}\mathfrak{C}_{4}^{2}\mathfrak{C}_{5}\mathfrak{C}_{6}$ or $\mathfrak{C}_{1}^{4}\mathfrak{C}_{2}\mathfrak{C}_{3}^{2}\mathfrak{C}_{4}^{2}\mathfrak{C}_{5}\mathfrak{C}_{6}$. Similarly, for the triple that includes the $e$-cycles, use $z_{1}=y_{2}$ to show $\mathfrak{C}_{1}$, if the common break has a label in $\mathcal{P}$ (and by $\mathfrak{P}(\mathrm{sub})$ then $e>3$) use $z_{1}=y_{3}$ and $(z_{1},z_{2})=(y_{3},y_{4})$ to show $\mathfrak{C}_{1}$ and $\mathfrak{C}_{2}$, and in the last case (by $\mathfrak{P}(\mathcal{L}',19)$ then $e>18$) use the $y_{i}$ with $5\leq i\leq 19$ to show the remaining properties. If the $d$-cycles and the $e$-cycles are swapped, the same values apply. For the two triples we used disjoint values, the $x_{i}$ for the $d$-cycles and the $y_{i}$ for the $e$-cycles, so Definition~\ref{de:aligned}\eqref{de:aligned-disj} is respected.

For \eqref{eq:baby3} we argue similarly. This time, when the $d$-cycles are involved, we use $x_{3}$ for $\mathfrak{C}_{1}$, $x_{1}$ and $(x_{1},x_{2})$ for $\mathfrak{C}_{1}$ and $\mathfrak{C}_{2}$, and again $x_{i}$ with $4\leq i\leq 18$ for the other properties. When the $e$-cycles are involved instead, we use $y_{2}$ for $\mathfrak{C}_{1}$, $y_{3}$ and $(y_{3},y_{4})$ for $\mathfrak{C}_{1}$ and $\mathfrak{C}_{2}$, and $y_{i}$ with $5\leq i\leq 19$ for the other properties. If the $d$-cycles and the $e$-cycles are swapped, the same values apply, and Definition~\ref{de:aligned}\eqref{de:aligned-disj} is respected.

For \eqref{eq:baby4} the triple with the $e$-cycles uses the same values as \eqref{eq:baby3}, and for the triple with the $d$-cycles use $y_{1}$ for $\mathfrak{C}_{1}$, $x_{1}$ and $(x_{1},x_{2})$ for $\mathfrak{C}_{1}$ and $\mathfrak{C}_{2}$, and $x_{i}$ with $4\leq i\leq 18$ for the other properties.

Putting together all the cases described above, there is an aligned solution for Reduction~\ref{re:7}.
\end{proof}

As is evident from the proof above, there is no need to ask for $\delta_{7}$ small and $n$ large to obtain Proposition~\ref{pr:7good}. The condition becomes necessary when undoing the reductions, because we need the construction of the various $\theta_{k}$ to make sense.

\begin{proof}[Proof of Thm.~\ref{th:main2}]
By Proposition~\ref{pr:7good} there is an aligned solution for Reduction~\ref{re:7}. Then we use, one after the other, Lemmas~\ref{le:7to6}--\ref{le:6to5}--\ref{le:5to4}--\ref{le:4to3}--\ref{le:3to2}--\ref{le:2to1}--\ref{le:1to0} and obtain Theorem~\ref{th:main2}.
\end{proof}

\section*{Acknowledgements}

The author is grateful to Attila Mar\'oti for attracting his attention to the problem, for discussions about how to tackle it, and for providing helpful comments on an early version of the paper. He also thanks Pham Huu Tiep for suggesting useful references. The author was funded by a Young Researcher Fellowship from the R\'enyi Institute.

\bibliography{Bibliography}
\bibliographystyle{alpha}

\end{document}